\newtheorem{thm}{Theorem}[section]
\newtheorem{cor}[thm]{Corollary}
\newtheorem{lem}[thm]{Lemma}
\newtheorem{prop}[thm]{Proposition}
\theoremstyle{definition}
\newtheorem{example}[thm]{Example}
\newtheorem{remark}[thm]{Remark}
\numberwithin{equation}{section}  
\newenvironment{proof*}[1][\proofname]{\vskip-\lastskip\par
\vskip 8pt plus2pt minus2pt
  \normalfont
\noindent  {\itshape
  #1\@addpunct{.}}\hspace{.55em}\mdseries \ignorespaces
}{%
  {\vskip 8pt plus2pt minus2pt }
}
\newcommand{\Thm}[1]{Theorem~\ref{th:#1}}
\newcommand{\Prop}[1]{Proposition~\ref{prop:#1}}
\newcommand{\Lem}[1]{Lemma~\ref{lem:#1}}
\newcommand{\Cor}[1]{Corollary~\ref{cor:#1}}
\newcommand{\Rem}[1]{Remark~\ref{rem:#1}}
\newcommand{\Exam}[1]{Example~\ref{ex:#1}}
\newcommand{\Fig}[1]{\ref{fig:#1}}
\newcommand{\Eq}[1]{\eqref{eq:#1}}
\def\rom#1{\mbox{\leavevmode\skip@\lastskip\unskip\/\ifdim\skip@=\z@\else\hskip\skip@\fi{\rm{#1}}}}
\renewcommand{\labelenumi}{{\rom{(\roman{enumi})}}}
\renewcommand{\a}{\alpha}\renewcommand{\b}{\beta}
\newcommand{\gm}{\gamma}\newcommand{\dl}{\delta}
\newcommand{\eps}{\varepsilon}
\newcommand{\lm}{\lambda}
\newcommand{\sg}{\sigma}
\newcommand{\ph}{\varphi}
\newcommand{\om}{\omega}
\newcommand{\Gm}{\Gamma}
\newcommand{\Lm}{\Lambda}
\newcommand{\Sg}{\Sigma}
\newcommand{\bfb}{\boldsymbol{b}}
\newcommand{\bfh}{{\boldsymbol{h}}}
\newcommand{\bfr}{\boldsymbol{r}}
\newcommand{\bfone}{\boldsymbol{1}}
\newcommand{\C}{\mathbb{C}}
\newcommand{\N}{\mathbb{N}}
\newcommand{\R}{\mathbb{R}}
\newcommand{\Z}{\mathbb{Z}}
\newcommand{\cD}{\mathcal{D}}
\newcommand{\cE}{\mathcal{E}}
\newcommand{\cF}{\mathcal{F}}
\newcommand{\cI}{\mathcal{I}}
\newcommand{\cH}{\mathcal{H}}
\newcommand{\cL}{\mathcal{L}}
\newcommand{\cO}{\mathcal{O}}
\newcommand{\cP}{\mathcal{P}}
\newcommand{\cQ}{\mathcal{Q}}
\newcommand{\sd}{\mathsf d}
\newcommand{\sJ}{\mathscr{J}}
\newcommand{\la}{\langle}\newcommand{\ra}{\rangle}
\newcommand{\wg}{\wedge}
\newcommand{\tr}{\mathop{\mathrm{tr}}\nolimits}
\newcommand{\diam}{\mathop{\mathrm{diam}}\nolimits}
\newcommand{\conn}[1]{\xleftrightarrow[#1]{}}
\begin{document}


\TitleHead{Geodesic Distances and Intrinsic Distances}
\title{Geodesic Distances and Intrinsic Distances\\ on Some Fractal Sets}

\dedicatory{Dedicated to Professor Ichiro Shigekawa on the occasion of his 60th birthday}   

\AuthorHead{M. Hino}
\author{Masanori \textsc{Hino}\footnote{M. Hino: 
Graduate School of Engineering Science,
Osaka University,
Osaka 560-8531, Japan;
\email{hino@sigmath.es.osaka-u.ac.jp}}}

\classification{Primary 31C25; Secondary 28A80, 31E05.}
\keywords{geodesic distance, intrinsic distance, Dirichlet form, fractal, energy measure.}

\maketitle

\begin{abstract}
Given strong local Dirichlet forms and $\R^N$-valued functions on a metrizable space, we introduce the concepts of geodesic distance and intrinsic distance on the basis of these objects.
They are defined in a geometric and an analytic way, respectively, and they are closely related with each other in some classical situations.
In this paper, we study the relations of these distances when the underlying space has a fractal structure.
In particular, we prove their coincidence for a class of self-similar fractals.
\end{abstract}

\section{Introduction}
For the analysis of strong local Dirichlet forms $(\cE,\cF)$ on a metric measure space $(K,\mu)$, the intrinsic distance 
defined as
\[
\sd(x,y)=\sup\{f(y)-f(x)\mid f\in\cF_{\mathrm{loc}}\cap C(K)\text{ and }\mu_{\la f\ra}\le \mu\},
 \quad x,y\in K,
\]
often plays a crucial role.
Here, $\cF_{\mathrm{loc}}$ represents the space of functions locally in $\cF$ and $\mu_{\la f\ra}$ denotes the energy measure of $f$.
For example, in a general framework, the off-diagonal Gaussian estimate and the Varadhan estimate of the transition density associated with $(\cE,\cF)$ are described on the basis of the intrinsic distance (see, e.g., \cite{St95,No97,Da} and the references therein).
When the underlying space has a Riemannian structure, the geodesic distance $\rho(x,y)$ is also defined as the infimum of the length of continuous curves connecting $x$ and $y$, and $\sd$ and $\rho$ coincide with each other under suitable conditions.

In this paper, we focus on the case when $K$ does not have a differential structure, in particular, when $K$ is a fractal set, and we study the relation between two distances that are defined in a way similar to $\sd$ and $\rho$.
The straightforward formulation of this problem, however, does not work well.
This is because 
in typical examples such as the canonical Dirichlet forms on Sierpinski gaskets with the Hausdorff measure~$\mu$, the energy measures are always singular to $\mu$ (see, e.g., \cite{Hi05,HN06,Ku89}); accordingly, $\sd$ vanishes everywhere.
This is closely related to the fact that the transition density exhibits sub-Gaussian behavior.
Nevertheless, if the reference measure in the definition of $\sd$ is replaced suitably, we can obtain a nontrivial intrinsic distance.
Indeed, Kigami~\cite{Ki08} and Kajino~\cite{Ka12} studied, following Metz and Sturm~\cite{MS95}, the canonical Dirichlet form on the 2-dimensional standard Sierpinski gasket with the underlying measure $\mu_{\la h_1\ra}+\mu_{\la h_2\ra}$, where the pair $h_1$ and $h_2$ is taken as the orthonormal system of the space of harmonic functions.
In such a case, the mapping $\bfh:=(h_1,h_2)\colon K\to\R^2$ provides a homeomorphism of $K$ to its image (\cite{Ki93}). In particular, they proved that
\begin{itemize}
\item the intrinsic distance $\sd_\bfh$ on $K$ coincides with the geodesic distance $\rho_\bfh$ on $\bfh(K)$ by identifying $K$ and $\bfh(K)$;
\item the transition density associated with $(\cE,\cF)$ on $L^2(K,\mu)$ has off-diagonal Gaussian estimates by using such distances.
\end{itemize}
In this paper, we study the relation between $\sd_\bfh$ and $\rho_\bfh$ (defined on $K$) in more general frameworks.
First, we prove the one-sided inequality $\rho_\bfh\le \sd_\bfh$ when the underlying spaces have finitely ramified cell structures (\Thm{1}).
The reverse inequality is proved under tighter constraints on self-similar Dirichlet forms on a class of self-similar fractals (\Thm{2}); typical examples are the standard Dirichlet forms on the 2-dimensional generalized Sierpinski gaskets.
Both the proofs are based on purely analytic arguments, unlike the corresponding proof in \cite{Ka12}, where detailed information of the transition density was utilized, together with probabilistic arguments.
Our results are applicable to some examples in which the precise behaviors of the associated transition densities are not known.
The crucial part of the proof of \Thm{1} is that the truncated geodesic distance function based on $\bfh$ satisfies the conditions in the definition of $\sd_\bfh$.
To prove this claim, we show that a discrete version of the geodesic distance has some good estimates and that the limit function inherits them.
The proof of \Thm{2} is more tricky.
The key lemma (\Lem{2.3}) is an analog of the classical fact on domain $D$ of $\R^d$, stating that any function $f\in W^{1,1}(D)$ with $|\nabla f|_{\R^d}\le 1$ a.e.\ is locally Lipschitz with a local Lipschitz constant less than or equal to $1$.
We prove that $\sd_\bfh(x,y)\le (1+\eps)\rho_\bfh(x,y)$ if $x$ and $y$ are suitably located.
An inequality of this type is not evident in the nonsmooth setting;
the hidden obstacle is that a type of ``Riemannian metric'' which $K$ is equipped with is degenerate almost everywhere (cf.~\cite{Hi10,Hi12,Ku89}), and we have \textit{a priori} the inequality stated above only for the points that are nondegenerate with respect to $\bfh$.
Using a rather strong assumption ((B1) in Section~2), we can take sufficiently many such good points on arbitrary continuous curves, which enables us to deduce the inequality $\sd_\bfh\le\rho_\bfh$.
At the moment, we need various assumptions to obtain such estimates owing to the lack of more effective tools for analysis.
However, we expect the claims of theorems in this paper to be valid in much more general situations, possibly with an appropriate modification of the framework (see also \Rem{classical} for further discussion).

The remainder of this article is organized as follows. 
In Section~2, we present the framework and state the main theorems.
In Sections~3 and 4, we prove Theorems~\ref{th:1} and \ref{th:2}, respectively.
\section{Framework and results}
Let $K$ be a compact metrizable space, and $\mu$, a finite Borel measure on $K$ with full support.
Let $d_K$ denote a metric on $K$ that is compatible with the topology.
For subsets $U$ of $K$, we denote the closure, interior, and boundary of $U$ by $\overline{U}$, $U^\circ$ and $\partial U$, respectively.
The set of all real-valued continuous functions on $K$ is represented as $C(K)$, which is equipped with the uniform topology.

Let $(\cE,\cF)$ be a strong local regular (symmetric) Dirichlet form on $L^2(K,\mu)$.
For simplicity, we write $\cE(f)$ for $\cE(f,f)$.
The space $\cF$ is regarded as a Hilbert space with the inner product $(f,g)_{\cF}:=\cE(f,g)+\int_{K}fg\,d\mu$. 
For $f\in\cF$, $\mu_{\la f\ra}$ denotes the energy measure of $f$,
that is, when $f$ is bounded, $\mu_{\la f\ra}$ is characterized by the identity
\[
\int_K \ph\,d\mu_{\la f\ra}=2\cE(f,f\ph)-\cE(f^2,\ph)
\quad\mbox{for all }\ph\!\in\!\cF\cap C(K);
\]
for general $f\in\cF$, $\mu_{\la f\ra}$ is defined by the natural approximation.
Let $N\in\N$ and $\bfh=(h_1,\dots,h_N)$ such that $h_j\in\cF\cap C(K)$ for every $j=1,\dots,N$.
Let $\mu_{\la\bfh\ra}$ denote $\sum_{j=1}^N \mu_{\la h_j\ra}$.
Then, the {\em intrinsic distance} based on $(\cE,\cF)$ and $\mu_{\la\bfh\ra}$ is defined as
\begin{equation}\label{eq:sd}
 \sd_\bfh(x,y):=\sup\{f(y)-f(x)\mid f\in\cF\cap C(K)\text{ and }\mu_{\la f\ra}\le \mu_{\la \bfh \ra}\},
 \quad x,y\in K.
\end{equation}
We remark that the underlying measure $\mu$ does not play an essential role in \Eq{sd}. 
Further, we do not assume the absolute continuity of energy measures with respect to $\mu$ or $\mu_{\la \bfh\ra}$.
For a continuous curve $\gamma\in C([s,t]\to K)$, its length based on $\bfh$ is defined as
\begin{align*}
 \ell_\bfh(\gamma):=\sup\biggl\{&\sum_{i=1}^n|\bfh(\gamma(t_{i}))-\bfh(\gamma(t_{i-1}))|_{\R^N}\biggm|
 n\in\N,\ s=t_0<t_1<\dots<t_n=t\biggr\},
\end{align*}
where $|\cdot|_{\R^N}$ denotes the Euclidean norm on $\R^N$.
This is nothing but the pullback of the concept of the usual length of curves in $\R^N$ by the map $\bfh$.
Then, the {\em geodesic distance} based on $\bfh$ is defined as
\[
  \rho_\bfh(x,y):=\inf\{\ell_\bfh(\gamma)\mid \gamma\in C([0,1]\to K),\ \gamma(0)=x,\ \mbox{and }\gamma(1)=y\},
\quad x,y\in K,
\]
where $\inf\emptyset:=\infty$.
If $\gm\in C([s,t]\to K)$ satisfies that $\gm(s)=x$, $\gm(t)=y$, and $\rho_\bfh(x,y)=\ell_\bfh(\gm)$, we say that $\gm$ is a shortest path connecting $x$ and $y$.

We note that the two distances introduced here can be defined for more general underlying spaces such as locally compact spaces, by slight modifications if necessary.
In this paper, however, we consider only compact spaces for simplicity.
\begin{remark}
We have the following properties.
\begin{enumerate}
\item Both $\sd_\bfh$ and $\rho_\bfh$ are ($[0,+\infty]$-valued) quasi-metrics on $K$, that is, the distance between two distinct points may be zero, but all the other axioms of metric are satisfied
(see \Cor{metric} for further discussion).
\item Let $\gm\in C([s,t]\to K)$. If $\{s_n\}$ decreases to $s$ and $\{t_n\}$ increases to $t$, then $\lim_{n\to\infty}\ell_\bfh(\gm|_{[s_n,t_n]})=\ell_\bfh(\gm)$.
\item If $\gm\in C([0,1]\to K)$ is a shortest path connecting $x$ and $y$ with $\rho_\bfh(x,y)<\infty$, then for any $0\le s<t\le1$, $\gm|_{[s,t]}$ is a shortest path connecting $\gm(s)$ and $\gm(t)$.
\item  If $\bfh\colon K\to\R^N$ is injective, then for any $x,y\in K$, $\rho_\bfh(x,y)$ coincides with the geodesic distance between $\bfh(x)$ and $\bfh(y)$ in $\bfh(K)\subset \R^N$ on the basis of the Euclidean distance.
\end{enumerate}
\end{remark}
In order to state the first theorem, we consider the following conditions.
\begin{enumerate}\def\labelenumi{(A\arabic{enumi})}
\item There exists an increasing sequence of nonempty finite subsets $\{V_m\}_{m=0}^\infty$ of $K$ such that the following hold:
\begin{enumerate}\def\labelenumii{(\roman{enumii})}
\item For each $m$, $K\setminus V_m$ is decomposed into finitely many connected components $\{U_\lm\}_{\lm\in\Lambda_m}$;
\item For every $x\in K$, the sets $\{\bigcup_{\lm\in \Lm_m;\ x \in \overline{U_\lm}}\overline{U_\lm} \}_{m=0}^\infty$ constitute a fundamental system of neighborhoods of $x$.
\end{enumerate}
\item $\cF\subset C(K)$.
\item $\cE(f,f)=0$ if and only if $f$ is a constant function.
\end{enumerate}
We give several remarks.
In \Lem{connectedness} below, it is proved from conditions~(A1)--(A3) that $K$ is arcwise connected.
Then, it is easy to prove that $V_*:=\bigcup_{m=0}^\infty V_m$ is dense in $K$.
From the closed graph theorem, (A2) implies that $\cF$ is continuously imbedded in $C(K)$.
Condition~(A3) is equivalent to the irreducibility of $(\cE,\cF)$ in this framework, from \cite[Theorem~2.1.11]{CF}, for example.
For $\lm\in\Lm_m$ and $\lm'\in\Lm_{m'}$ with $m\le m'$, either $U_\lm\supset U_{\lm'}$ or $U_\lm\cap U_{\lm'}=\emptyset$ holds.
A slightly different version of (A1) was discussed in \cite{Te08} and named finitely ramified cell structure.

The first main theorem is stated as follows.
\begin{thm}\label{th:1}
Suppose \rom{(A1)}--\rom{(A3)}.
Then, $\rho_\bfh(x,y)\le \sd_\bfh(x,y)$ for all $x,y\in K$.
\end{thm}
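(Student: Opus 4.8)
\begin{proof*}[Sketch of the intended proof]
Fix $x_0\in K$. It suffices to prove that for every $c\in(0,\infty)$ the truncated function $u:=\rho_\bfh(x_0,\cdot)\wg c$ lies in $\cF$ and satisfies $\mu_{\la u\ra}\le\mu_{\la\bfh\ra}$. Indeed, \rom{(A2)} then gives $u\in C(K)$, so $u$ is an admissible competitor in \Eq{sd} and $\sd_\bfh(x_0,y)\ge u(y)-u(x_0)=\rho_\bfh(x_0,y)\wg c$ for all $y$; letting $c\uparrow\infty$ yields $\sd_\bfh(x_0,y)\ge\rho_\bfh(x_0,y)$, the case $\rho_\bfh(x_0,y)=+\infty$ being covered as well. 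The truncation is precisely what lets us handle a bounded function of finite energy even before knowing whether $\rho_\bfh$ is finite.

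To construct $u$ I would approximate it by discrete objects attached to the cell structure of \rom{(A1)}. For each $m$, introduce a graph-type quasi-distance $\dl_m$ on $K$: a single step joins two points lying in a common cell closure $\overline{U_\lm}$ ($\lm\in\Lm_m$) and costs the Euclidean distance of their $\bfh$-images, and $\dl_m(p,q)$ is the infimum of $\sum_i|\bfh(p_{i-1})-\bfh(p_i)|_{\R^N}$ over chains $p=p_0,\dots,p_k=q$ with each consecutive pair in a common cell closure. Since level-$(m+1)$ cells refine level-$m$ cells, every $(m+1)$-chain is an $m$-chain, so $\dl_m\le\dl_{m+1}$; and sampling a curve $\gm$ finely enough — using the neighborhood basis in \rom{(A1)(ii)} to put each consecutive pair of sample points in a common cell closure — together with the triangle inequality in $\R^N$, gives $\dl_m\le\rho_\bfh$. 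The crux of the whole proof is the reverse convergence $\dl_m\uparrow\rho_\bfh$: from a near-optimal level-$m$ chain one must build a genuine continuous curve whose $\bfh$-length overshoots the chain's cost by an amount tending to $0$ as $m\to\infty$. This is carried out by induction on $m$, splicing consecutive chain points by short curves inside the relevant cells, and it uses \rom{(A1)} essentially — via \Lem{connectedness} (arcwise connectedness of $K$ and of each cell closure) and the fact that, by \rom{(A1)(ii)}, continuity of $\bfh$ and compactness, the $\bfh$-diameters of level-$m$ cells tend uniformly to $0$. I expect this to be the main obstacle.

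For the analytic side, I would first record the preliminary fact that $\mu_{\la\sum_j a_jh_j\ra}\le\mu_{\la\bfh\ra}$ whenever $|a|_{\R^N}\le1$ (Cauchy--Schwarz for the matrix-valued measure $(\mu_{\la h_i,h_j\ra})_{i,j}$), whence, by taking a countable supremum, using the identity $\mu_{\la f\vee g\ra}=\bfone_{\{f\ge g\}}\mu_{\la f\ra}+\bfone_{\{f<g\}}\mu_{\la g\ra}$ and its analogue for $\wedge$, an $\cF$-approximation and lower semicontinuity of energy measures, the function $|\bfh(p)-\bfh(\cdot)|_{\R^N}=\sup_{|a|_{\R^N}\le1}(\la a,\bfh(\cdot)\ra_{\R^N}-\la a,\bfh(p)\ra_{\R^N})$ belongs to $\cF$ with energy measure $\le\mu_{\la\bfh\ra}$, for every $p\in K$. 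Now define $u_m$ on $K$ by setting, for $y\in\overline{U_\lm}$,
\[
 u_m(y):=\Bigl(\min_{p\in\partial U_\lm}\bigl(\dl_m(x_0,p)+|\bfh(p)-\bfh(y)|_{\R^N}\bigr)\Bigr)\wg c,
\]
which is well defined since $\partial U_\lm\subset V_m$ is finite, is continuous, equals $\dl_m(x_0,\cdot)\wg c$ on $V_m$, and is $1$-Lipschitz on each $\overline{U_\lm}$ with respect to $|\bfh(\cdot)-\bfh(\cdot)|_{\R^N}$. On each open cell $U_\lm$, $u_m$ is a finite minimum of functions $\dl_m(x_0,p)+|\bfh(p)-\bfh(\cdot)|_{\R^N}$ composed with the $1$-Lipschitz map $t\mapsto t\wg c$, so by the preliminary fact and the $\wedge$/minimum inequalities $\mu_{\la u_m\ra}\le\mu_{\la\bfh\ra}$ there; since the exceptional set $V_m$ is finite, a localization of the energy measures propagates this bound to all of $K$. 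Finally $u_m\uparrow u$ pointwise by $\dl_m\uparrow\rho_\bfh$, hence $u_m\to u$ in $L^2(K,\mu)$ by dominated convergence; as $\mu_{\la u_m\ra}(K)\le\mu_{\la\bfh\ra}(K)<\infty$ the $u_m$ are bounded in $\cF$, so $u\in\cF$ and $u_m\rightharpoonup u$ weakly in $\cF$, and lower semicontinuity of energy measures gives $\mu_{\la u\ra}\le\mu_{\la\bfh\ra}$, completing the proof.
\end{proof*}
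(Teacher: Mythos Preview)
Your overall strategy coincides with the paper's: produce functions $u_m\in\cF$ with $\mu_{\la u_m\ra}\le\mu_{\la\bfh\ra}$ converging to $\rho_\bfh(x_0,\cdot)\wg c$, then pass to the limit. On the energy side you take a genuinely different route: the paper uses the harmonic extension $f_n:=H_n\ph_n$ of the discrete distance and bounds $\mu_{\la f_n\ra}(U_\lm)$ via the discrete Dirichlet-form representation of \Lem{expression} together with the Lipschitz inequality $|\ph_n(z)-\ph_n(z')|\le|\bfh(z)-\bfh(z')|_{\R^N}$ and \Lem{min}, whereas you work directly with the building blocks $|\bfh(p)-\bfh(\cdot)|_{\R^N}$ and the max/min calculus for energy measures. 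Your approach avoids \Lem{expression} entirely, which is pleasant, but you have glossed over why the piecewise-defined $u_m$ lies in $\cF$ globally; the $g_\lm$ agree with $u_m$ on $\partial U_\lm$ but not on the rest of $V_m$, so a bare patching does not suffice. This is repairable (lay down a scaffold $H_m(u_m|_{V_m})\in\cF$ via \Lem{Hm} and correct it on each $U_\lm$ using \Lem{zero}), though at that point you are invoking the same machinery as the paper.

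The real gap is in your sketch of $\dl_m\uparrow\rho_\bfh$. Your plan is to take a near-optimal level-$m$ chain and splice in continuous curves inside each cell, claiming that the overshoot tends to $0$ because the $\bfh$-diameters of the cells do. But the $\bfh$-diameter of $\overline{U_\lm}$ controls only $|\bfh(p)-\bfh(q)|_{\R^N}$ for $p,q\in\overline{U_\lm}$; it gives no bound whatsoever on $\ell_\bfh(\gm)$ for a curve $\gm$ inside $\overline{U_\lm}$, which is the total variation of $\bfh\circ\gm$ and can be arbitrarily larger. Thus the spliced curve may have $\ell_\bfh$ far exceeding the chain cost, and nothing forces the excess to vanish with $m$. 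The paper's \Prop{rhoh} establishes $\hat\rho_\bfh=\rho_\bfh$ on $V_*$ by a completely different mechanism: it takes optimal self-avoiding $n$-walks $\hat\gm_n$ for \emph{all} $n$ simultaneously, extracts by diagonalization a subsequence along which the projections $\pi_{n(k),j}(\hat\gm_{n(k)})$ stabilize for every level $j$, and then (using \Lem{diam}) assembles this consistent family of walks into a single limit curve $\gm$ with $\ell_\bfh(\gm)=\sup_j\ell_\bfh^{(j)}(\pi_{n(j),j}(\hat\gm_{n(j)}))\le\sup_n\hat\rho_\bfh^{(n)}(x,y)$. No curve is ever built from a single level, and the shrinking of $\bfh$-diameters plays no role; only $\diam\overline{U_\lm}\to0$ in the original metric $d_K$ is needed. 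The extension from $V_*$ to all of $K$ is then handled separately via \Cor{rhoh} and \Lem{dh}.
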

To obtain the reverse inequality, we need tighter constraints.
Following\break Kigami~\cite{Ki}, we introduce the concepts of post-critically finite self-similar sets and harmonic structures associated with them.
Let $\Z_+$ denote the set of all nonnegative integers.
Let $S$ be a finite set with $\#S\ge2$.
For $i\in S$, let $\psi_i\colon K\to K$ be a continuous injective map.
Set $\Sg=S^\N$. For $i\in S$, we define a shift operator $\sg_i\colon \Sg\to\Sg$ as $\sg_i(\om_1\om_2\cdots)=i\om_1\om_2\cdots$.
We assume that there exists a continuous surjective map $\pi\colon \Sg\to K$ such that $\psi_i\circ \pi=\pi\circ\sg_i$ for every $i\in S$.
Then, $\cL=(K,S,\{\psi_i\}_{i\in S})$ is called a self-similar structure.

We set $W_0=\{\emptyset\}$ and $W_m=S^m$ for $m\in \N$.
For $w=w_1w_2\cdots w_m\in W_m$, let $\psi_w$ denote $\psi_{w_1}\circ\psi_{w_2}\circ\cdots\circ\psi_{w_m}$ and let $K_w$ denote $\psi_w(K)$.
By convention, $\psi_\emptyset$ is the identity map from $K$ to $K$.
Let 
\[
\cP=\bigcup_{m=1}^\infty \sg^m\left(\pi^{-1}\left(\bigcup_{i,j\in S,\,i\ne j}(K_i\cap K_j)\right)\right)\quad\text{and}\quad V_0=\pi(\cP),
\]
where $\sg^m\colon\Sigma\to\Sigma$ is defined as $\sg^m(\om_1\om_2\cdots)=\om_{m+1}\om_{m+2}\cdots$.
The set $\cP$ is called the post-critical set.
We assume that $K$ is connected and that the self-similar structure $\cL$ is {post-critically finite} (p.c.f.), that is, $\cP$ is a finite set.
For $m\in\N$, let $V_m=\bigcup_{w\in W_m}\psi_w(V_0)$.

For a finite set $V$, $l(V)$ denotes the space of all real-valued functions on $V$.
We equip $l(V)$ with an inner product $(\cdot,\cdot)_{l(V)}$   defined by $(u,v)_{l(V)}=\sum_{q\in V}u(q)v(q)$.
The norm derived from $(\cdot,\cdot)_{l(V)}$ is denoted by $|\cdot|_{l(V)}$.
Let $D=(D_{qq'})_{q,q'\in V_0}$ be a symmetric linear operator on $l(V_0)$ (also considered to be a square matrix of size $\#V_0$) such that the following conditions hold:
\begin{enumerate}
\item[(D1)] $D$ is nonpositive definite;
\item[(D2)] $Du=0$ if and only if $u$ is constant on $V_0$;
\item[(D3)] $D_{qq'}\ge0$ for all $q\ne q'\in V_0$.
\end{enumerate}
We define $\cE^{(0)}(u,v)=(-Du,v)_{l(V_0)}$ for $u,v\in l(V_0)$.
This is a Dirichlet form on the $L^2$ space on $V_0$ with the counting measure (cf.~\cite[Proposition~2.1.3]{Ki}).
For $\bfr=\{r_i\}_{i\in S}$ with $r_i>0$, we define a bilinear form $\cE^{(m)}$ on $l(V_m)$ as
\begin{equation*}
  \cE^{(m)}(u,v)=\sum_{w\in W_m}\frac{1}{r_w}\cE^{(0)}(u\circ\psi_w|_{V_0},v\circ\psi_w|_{V_0}),\quad
  u,v\in l(V_m).
\end{equation*}
Here, $r_w=r_{w_1}r_{w_2}\cdots r_{w_m}$ for $w=w_1w_2\cdots w_m\in W_m$ and $r_\emptyset=1$.
We call $(D,\bfr)$ a {regular harmonic structure} if $0<r_i<1$ for all $i\in S$ and
\[
\cE^{(0)}(v,v)=\inf\{\cE^{(1)}(u,u)\mid u\in l(V_1)\mbox{ and }u|_{V_0}=v\}
\]
for every $v\in l(V_0)$.
Then, $\cE^{(m)}(u|_{V_m},u|_{V_m})\le \cE^{(m+1)}(u,u)$ for $m\in\Z_+$ and $u\in l(V_{m+1})$.
The existence of harmonic structures is a nontrivial problem.
It is known that all nested fractals have canonical regular harmonic structures (\cite{Li}; see also \cite{Ki}).

We assume that a regular harmonic structure $(D,\bfr)$ is given.
Let $\mu$ be a finite Borel measure on $K$ with full support.
We can then define a strong local and regular Dirichlet form $(\cE,\cF)$ on $L^2(K,\mu)$ associated with $(D,\bfr)$ by
\begin{align*}
\cF&=\left\{u\in C(K)\subset L^2(K,\mu)\left|\,
\lim_{m\to\infty}\cE^{(m)}(u|_{V_m},u|_{V_m})<\infty\right.\right\},\\
\cE(u,v)&= \lim_{m\to\infty}\cE^{(m)}(u|_{V_m},v|_{V_m}),\quad u,v\in\cF
\end{align*}
(see the beginning of \cite[Section~3.4]{Ki}).
Then, conditions (A1)--(A3) are satisfied. 
((A1) is guaranteed by \cite[Proposition~1.6.8~(2) and Proposition~1.3.6]{Ki}.)

For a map $\psi\colon K\to K$ and a function $f$ on $K$, $\psi^* f$ denotes the pullback of $f$ by $\psi$, that is, $\psi^* f=f\circ \psi$.
The Dirichlet form $(\cE,\cF)$ constructed above satisfies the self-similarity
\[
  \cE(f,g)=\sum_{i\in S}\frac1{r_i}\cE(\psi_i^* f,\psi_i^* g),\quad f,g\in\cF.
 \]
For each $u\in l(V_0)$, there exists a unique function $h\in\cF$ such that $h|_{V_0}=u$ and $\cE(h)=\inf\{\cE(g)\mid g\in\cF,\ g|_{V_0}=u\}$.
Such a function $h$ is termed a {harmonic function}.
The space of all harmonic functions is denoted by $\cH$.
For any $w\in W_*$ and $h\in\cH$, $\psi_w^* h\in\cH$.
We can identify $\cH$ with $l(V_0)$ by the linear map $\iota\colon l(V_0)\ni u\mapsto h\in \cH$.
In particular, $\cH$ is a finite dimensional subspace of $\cF$.
For each $i\in S$, we define a linear operator $A_i\colon l(V_0)\to l(V_0)$ as 
$A_i=\iota^{-1}\circ \psi_i^* \circ \iota$,
which is also considered as a square matrix of size $\#V_0$.
For $i\ne j\in S$, the fixed points $p_i$ and $p_j$ of $\psi_i$ and $\psi_j$, respectively, are different by \cite[Lemma~1.3.14]{Ki}.
We set 
\[
S_0=\{i\in S\mid \mbox{the fixed point $p_i$ of $\psi_i$ belongs to $V_0$}\}.
\]
For $i\in S_0$, $r_i$ is an eigenvalue of $A_i$, and we can take its eigenvector $v_i\in l(V_0)$ whose components are all nonnegative (cf.~\cite[Theorem~A.1.2]{Ki}).
Note that $v_i(p_i)=0$ since $r_i\ne1$.

We now consider the following conditions:
\begin{enumerate}\def\labelenumi{(B\arabic{enumi})}
\item $\#V_0=3$;
\item For all $p\in V_0$, $K\setminus \{p\}$ is connected;
\item $\#S_0=3$, that is, each $p\in V_0$ is the fixed point of $\psi_i$ for some $i\in S_0$. Moreover, $Dv_i(q)<0$ for every $q\in V_0\setminus \{p\}$;
\item For every $i\in S_0$, $A_i$ is invertible.
\end{enumerate}
We remark that, in condition (B3), $v_i(q)>0$ follows in addition for every $q\in V_0\setminus\{p\}$ from (B2) and \cite[Corollary~A.1.3]{Ki}.
\begin{thm}\label{th:2}
Suppose \rom{(B1)}--\rom{(B4)}.
Take $\bfh=(h_1,\dots,h_N)$ such that each $h_j$ is a harmonic function.
Then, $\sd_\bfh(x,y)= \rho_\bfh(x,y)$ for all $x,y\in K$.
\end{thm}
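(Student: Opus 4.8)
By \Thm{1}, which applies here since conditions \rom{(A1)}--\rom{(A3)} are satisfied, it remains only to prove $\sd_\bfh(x,y)\le\rho_\bfh(x,y)$ for all $x,y\in K$; combined with \Thm{1} this gives the asserted equality. Fix $x,y\in K$. We may assume that some continuous curve joins them, for otherwise $\rho_\bfh(x,y)=\infty$. Fix such a $\gm\in C([0,1]\to K)$ with $\gm(0)=x$ and $\gm(1)=y$, and fix an \emph{admissible} function $f$, that is, $f\in\cF\cap C(K)$ with $\mu_{\la f\ra}\le\mu_{\la\bfh\ra}$. The plan is to show that for every $\eps>0$
\[
 f(y)-f(x)\le(1+\eps)\,\ell_\bfh(\gm)+\eps .
\]
Letting $\eps\downarrow0$, then taking the infimum over all continuous curves $\gm$ joining $x$ and $y$ and the supremum over all admissible $f$, this gives $\sd_\bfh(x,y)\le\rho_\bfh(x,y)$.

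The first ingredient is a purely local estimate, the key lemma announced in the introduction (\Lem{2.3}): a nonsmooth counterpart of the classical fact that a $W^{1,1}$ function whose gradient has Euclidean norm $\le1$ a.e.\ is locally $1$-Lipschitz. One singles out a class of $\bfh$-\emph{nondegenerate} points $p\in K$---heuristically those at which the ``measurable Riemannian metric'' with which $(\cE,\cF)$ and $\bfh$ endow $K$ is nondegenerate and $\bfh(K)\subset\R^N$ has a well-defined ``tangent direction''---and one shows that for every admissible $f$ and every $\bfh$-nondegenerate $p$ there is a neighbourhood $U$ of $p$ with
\[
 |f(a)-f(b)|\le(1+\eps)\,|\bfh(a)-\bfh(b)|_{\R^N}\qquad\text{for all }a,b\in U .
\]
The proof of \Lem{2.3}, and its use at such a $p$, proceeds by a self-similar blow-up: for $a=\psi_w(a')$, $b=\psi_w(b')$ in a cell $K_w\ni p$ one has $f(a)-f(b)=(\psi_w^*f)(a')-(\psi_w^*f)(b')$ and $\bfh(a)-\bfh(b)=(\psi_w^*\bfh)(a')-(\psi_w^*\bfh)(b')$, each component of $\psi_w^*\bfh$ is again harmonic, and the scaling rule $\mu_{\la\psi_w^*u\ra}=r_w\,(\psi_w^{-1})_*(\mu_{\la u\ra}|_{K_w})$ shows that $\psi_w^*f$ is admissible relative to $\psi_w^*\bfh$; thus the cell structure lets one pass from a Euclidean-style argument at a fixed small scale to an estimate valid throughout a neighbourhood of $p$. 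The hypotheses \rom{(B1)}--\rom{(B4)} are what keep these blow-ups under control near the vertex set $V_*$: \rom{(B4)} keeps the matrices $A_i$, and hence their products $A_w$, invertible, so the rescaled harmonic maps do not degenerate; \rom{(B3)} (with $v_i(q)>0$ coming from \rom{(B2)}, together with $Dv_i(q)<0$) forces $\bfh$ to be genuinely nondegenerate near each fixed point $p_i$ uniformly across scales; and \rom{(B1)}--\rom{(B2)} pin down the local topology near the vertices.

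The heart of the proof---and the step I expect to be the main obstacle---is to convert the local estimate into the global bound along $\gm$, \emph{despite} the fact that the ``Riemannian metric'' above is degenerate $\mu_{\la\bfh\ra}$-almost everywhere, so that $\bfh$-nondegenerate points are far too sparse, measure-theoretically, for a naive integration along $\gm$ to work. This is precisely where the strong hypothesis \rom{(B1)} ($\#V_0=3$) becomes decisive. Since each cell $K_w$ meets $K\setminus K_w$ only through its three vertices $\psi_w(V_0)$, a subarc of $\gm$ that crosses $K_w$ while avoiding the good neighbourhoods furnished by \Lem{2.3} around those three vertices must actually be confined to proper subcells of $K_w$, whose $\bfh$-diameters tend to $0$; a compactness argument on $[0,1]$ then yields, for each $\eps>0$, a partition $0=t_0<t_1<\dots<t_n=1$ such that on all but finitely many subintervals $[t_{i-1},t_i]$ the arc $\gm|_{[t_{i-1},t_i]}$ lies in a single good neighbourhood---whence $|f(\gm(t_i))-f(\gm(t_{i-1}))|\le(1+\eps)\,|\bfh(\gm(t_i))-\bfh(\gm(t_{i-1}))|_{\R^N}$---while the finitely many exceptional subintervals have total $\bfh$-length less than $\eps$ and, after one further refinement using the uniform continuity of $f$ on the compact space $K$, total contribution $\sum|f(\gm(t_i))-f(\gm(t_{i-1}))|$ less than $\eps$ as well. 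Summing the telescoping identity $f(y)-f(x)=\sum_{i=1}^n(f(\gm(t_i))-f(\gm(t_{i-1})))$ and using $\sum_{i=1}^n|\bfh(\gm(t_i))-\bfh(\gm(t_{i-1}))|_{\R^N}\le\ell_\bfh(\gm)$ then gives $f(y)-f(x)\le(1+\eps)\ell_\bfh(\gm)+\eps$, as required.
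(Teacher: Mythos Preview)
Your outline correctly identifies the reduction via \Thm{1} and the role of \Lem{2.3}, but the passage from the local estimate to the global bound has a genuine gap. The crux is your treatment of the ``exceptional'' subintervals: you assert that once their total $\bfh$-length is below $\eps$, a refinement using the uniform continuity of $f$ on $K$ forces their total $f$-variation below $\eps$ as well. This does not follow. Uniform continuity of $f$ is relative to $d_K$, not to $\ell_\bfh$ or $\rho_\bfh$; refining so that each exceptional piece has small $d_K$-diameter makes each $|f(\gm(t_i))-f(\gm(t_{i-1}))|$ small, but the number of pieces after refinement is not controlled, so the sum need not be small. What you actually need on the bad part is a quantitative bound of the form $\sd_\bfh\le C\rho_\bfh$---which is precisely the inequality you are trying to prove. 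Relatedly, the precise content of \Lem{2.3} is narrower than you state: it bounds $\sd_\bfh(x,y)$ by $(1+\eps)|\bfh(y)-\bfh(x)|_{\R^N}$ only when $x$ is a \emph{vertex} of a cell $K_w$ satisfying a specific nondegeneracy condition on $\psi_w^*\bfh$, and $y$ is a vertex of the deeper cell $K_{wi^n}$; it is not a two-sided Lipschitz estimate on a full neighbourhood.

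The paper closes the gap by a different mechanism. First, a rough comparison (\Lem{2.1} and \Cor{2.2}) gives a constant $c_0(M)\in(0,1)$ with $\rho_\bfh\ge c_0(M)\,\sd_\bfh$ between adjacent vertices at any scale; this is the substitute for your uniform-continuity step. Second, \Lem{good} (which is where \rom{(B1)} is really used, through $\dim\tilde l(V_0)=2$) guarantees that for every cell at least one of its two endpoint vertices along the shortest path $\gm$ satisfies the nondegeneracy hypothesis of \Lem{2.3}. One then runs an \emph{iterative} decomposition rather than a compactness argument: at stage $n$ each remaining subinterval $I_{n,k}$ lies in a cell of level $m+Mn$; peel off a subinterval $J_{n,k}$ adjacent to the good vertex, on which \Lem{2.3} applies, and observe via \Cor{2.2}(ii) that $\ell_\bfh(\gm|_{J_{n,k}})\ge c_0(M)\,\ell_\bfh(\gm|_{I_{n,k}})$. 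The leftover $\cI_{n+1}$ therefore satisfies $\ell_\bfh(\gm|_{\cI_{n+1}})\le(1-c_0(M))\,\ell_\bfh(\gm|_{\cI_n})$, i.e.\ geometric decay. Summing the telescoping estimate with the $(1+\eps)^{-1}$ bound on the peeled pieces and the crude $c_0(M)$ bound on the tail $\cI_{R+1}$, then letting $R\to\infty$ and $\eps\to0$, yields $\rho_\bfh(x,y)\ge\sd_\bfh(x,y)$ for $x,y\in V_*$; the general case follows by \Cor{rhoh} and \Lem{dh}.
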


Typical examples that meet conditions~(B1)--(B4) are given below.
\begin{example}\label{ex:gasket}
\begin{figure}\centering
\includegraphics{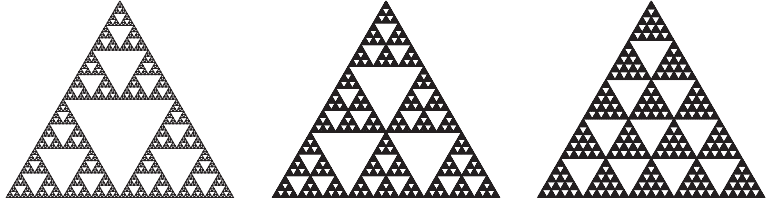}
\caption{2-dimensional level $l$ Sierpinski gaskets ($l=2,3,5$)}
\label{fig:1}
\end{figure}
Take the 2-dimensional level $l$ Sierpinski gasket as $K$ (see Figure~\Fig{1}).
The set $V_0$ consists of the three vertices $p_1$, $p_2$, and $p_3$ of the largest triangle in $K$.
For $i=1,2,3$, let $\psi_i$ denote the map whose fixed point is $p_i$ among the contraction maps constructing $K$.
Since $K$ is a nested fractal, there exists a canonical regular harmonic structure $(D,\bfr)$ corresponding to the Brownian motion on $K$.
The matrix $D$ is given by $D=(D_{p_ip_j})_{i,j=1}^3=\begin{bmatrix}-2&1&1\\1&-2&1\\1&1&-2\end{bmatrix}$.
The eigenvector $v_1$ is described as $v_1=\begin{bmatrix}0\\1\\1\end{bmatrix}$ by symmetry; thus, $Dv_1=\begin{bmatrix}2\\-1\\-1\end{bmatrix}$. Similarly, the vectors $v_i$ and $Dv_i$ for $i=2,3$ are described as
\[
v_2=\begin{bmatrix}1\\0\\1\end{bmatrix},~
Dv_2=\begin{bmatrix}-1\\2\\-1\end{bmatrix},~
v_3=\begin{bmatrix}1\\1\\0\end{bmatrix},~
Dv_3=\begin{bmatrix}-1\\-1\\2\end{bmatrix}.
\]
Therefore, conditions~(B1)--(B3) hold.
Condition~(B4) is also verified directly.
We note that the detailed information of the transition density associated with $(\cE,\cF)$ on $L^2(K,\mu_{\la \bfh \ra})$ is known only for $l=2$ (see \cite{Ki08,Ka12}), since we cannot expect the volume doubling property of $\mu_{\la \bfh \ra}$ if $l\ge3$.
\end{example}
The following examples are based on the suggestion of the referee.
\begin{example}
\begin{figure}\centering
\includegraphics{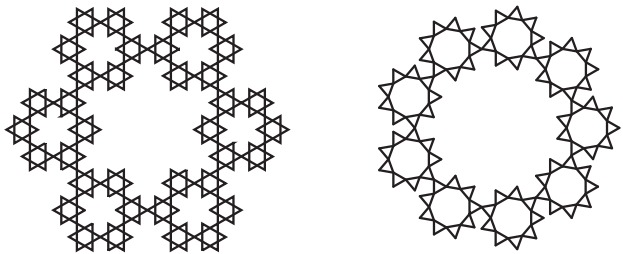}
\caption{Hexagasket $(n=6)$ and Nonagasket $(n=9)$}
\label{fig:2}
\end{figure}
Let $n$ be 6 or 9.
Let $S=\{0,1,\dots,n-1\}$ and $p_k=\exp(2\pi k\sqrt{-1}/n)$ for $k\in \Z$.
For $k\in S$, we define $\psi_k\colon \C \to\C$ by 
\[
 \psi_k(z)=p_k\{\b_n(z-1)+1\},
\]
where $\b_n=2/(3+\sqrt3\cot (\pi/n))$.
Let $K$ be the unique nonempty compact subset of $\C$ such that $K=\bigcup_{k\in S}\psi_k(K)$ (see Figure~\Fig{2} and also \cite[Example~7.4]{Te00}). Then, the triplet $(K,S,\{\psi_k|_K\}_{k\in S})$ constitutes a self-similar structure, $\#\cP=3$, and $V_0=\{p_0,p_{n/3},p_{2n/3}\}$. Note that $\b_n$ is taken so that $\psi_0(K)\cap\psi_1(K)$ is a one-point set, and that $\#V_0$ is not $n$ but 3 since $\psi_j$ involves a rotation.
We can construct a canonical harmonic structure as in \Exam{gasket} such that conditions~(B1)--(B4) hold.
\end{example}
\begin{remark}\label{rem:classical}
 Let us consider the classical case for comparison.
Let $K$ be a nonempty compact set of $\R^m$ such that $\overline{K^\circ}=K$ and $\partial K$ is a smooth hypersurface.
Let $(\cE,\cF)$ be a Dirichlet form on $L^2(K,dx)$ that is given by
\[
  \cE(f,g)=\frac12\int_K \sum_{i,j=1}^m a_{ij}(x)\frac{\partial f}{\partial x_i}(x)\frac{\partial g}{\partial x_j}(x)\,dx,
  \quad f,g\in\cF:= H^1(K).
\]
Here, $A(x)=(a_{ij}(x))_{i,j=1}^m$ is a symmetric, bounded, and uniformly positive definite matrix-valued continuous function on $K$.
Let $N\ge m$ and let $h_i$ be a Lipschitz function on $K$ for  $i=1,\dots,N$.
Let $B$ be an $N\times m$-matrix valued function on $K$ such that the $i$th row of $B(x)$ is equal to $^t\nabla h_i(x)$ for  $i=1,\dots,N$ and $x\in K$.
Assume that $A(x)$ is connected with $(h_1,\dots, h_N)$ by the identity $^t B(x)B(x)=A(x)^{-1}$ for a.e.~$x$.
Then, we have
\begin{align*}
d\mu_{\la \bfh\ra}
&=\sum_{i=1}^N(A(x)\nabla h_i(x),\nabla h_i(x))_{\R^m}\,dx\\
&=\sum_{i=1}^N {}^t\nabla h_i(x)A(x)\nabla h_i(x)\,dx\\
&=\tr \bigl(B(x)A(x)\,^t\! B(x)\bigr)\,dx\\
&=\tr \bigl(A(x)\,^t\! B(x)B(x)\bigr)\,dx\\
&= m\cdot dx.
\end{align*}
Therefore, $\sd_\bfh(x,y)$ should be defined as
\begin{align*}
&\sd_\bfh(x,y)\\
&=\sup\{f(y)-f(x)\mid f\in H^1(K)\cap C(K)\mbox{ and } (A(z)\nabla f(z),\nabla f(z))_{\R^m}\le m \mbox{ a.e.}\,z\}.
\end{align*}
Moreover, for $\bfb\in\R^m$ and $x\in K$,
\begin{align*}
\sum_{i=1}^N (\nabla h_i(x),\bfb)_{\R^m}^2
&=|B(x)\bfb|_{\R^N}^2
=\,^t\bfb\,^t\!B(x)B(x)\bfb
=|A(x)^{-1/2}\bfb|_{\R^m}^2,
\end{align*}
which implies that for $\gm\in C([0,1]\to K)$ that is piecewise smooth,
\begin{align*}
\ell_\bfh(\gm)
&=\int_0^1 \left|\frac{d}{dt}(\bfh\circ \gm)(t)\right|_{\R^N}dt\\
&=\int_0^1 \left(\sum_{i=1}^N\bigl((\nabla h_i)(\gm(t)),\dot\gm(t)\bigr)_{\R^m}^2\right)^{1/2}\,dt\\
&=\int_0^1 \left| A(\gm(t))^{-1/2}\dot \gm(t)\right|_{\R^m} dt.
\end{align*}
Thus, $\sd_\bfh(x,y)=\sqrt m \rho_\bfh(x,y)$ holds.\footnote{Though this type of identity ought to be a known result (cf.\ \cite{DP91}), we give a sketch of the proof. For $x\in K$ and $M>0$, the function $f:=\rho_\bfh(\cdot,y)\wg M$ on $K$ is locally Lipschitz and $|(\nabla f(x),\bfb)_{\R^m}|\le|A(x)^{-1/2}\bfb|_{\R^m}$ for any $\bfb\in\R^m$.
By letting $\bfb=A(x)\nabla f(x)$, we obtain $|A(x)^{1/2}\nabla f(x)|_{\R^m}\le1$. This implies that $\rho_\bfh(x,y)\wg M\le \sd_\bfh(x,y)/\sqrt m$ for $x\in K$.
On the other hand, for $f\in C^1(K)$, \[f(y)-f(x)=\int_0^1 (\nabla f(\gm(t)),\dot\gm(t))_{\R^m}\,dt\le \int_0^1|A(\gm(t))^{1/2}\nabla f(\gm(t))|_{\R^m}|A(\gm(t))^{-1/2}\dot \gm(t)|_{\R^m} dt.\]
Let $\gm\in C([0,1]\to K)$ be a piecewise-linear curve connecting $x$ and $y$, and $f$, a function on $K$ satisfying the condition in the definition of $\sd_\bfh$.
 Then the inequality 
 \[
 f(y)-f(x)\le \sqrt m\int_0^1|A(\gm(t))^{-1/2}\dot \gm(t)|_{\R^m} dt=\sqrt m\ell_\bfh(\gm)
 \]
 holds, by approximating $f$ by smooth functions if necessary. Taking supremum and infimum with respect to $f$ and $\gm$, respectively, we obtain that $\sd_\bfh(x,y)\le\sqrt m \rho_\bfh(x,y)$.}
This example shows that the information of the dimension of $K$ is required to identify $\sd_\bfh$ with $\rho_\bfh$ in general.
The author guesses that the correct measure to define the intrinsic metric is $p(x)^{-1}\,d\mu_{\la \bfh\ra}$ instead of $\mu_{\la \bfh\ra}$, where $p(x)$ is the pointwise index defined in \cite{Hi10} and represents the effective dimension of a type of tangent space at $x$ (see also \cite{Hi12}); the treatment of such a measure is beyond the scope of this paper.
Under the assumptions of \Thm{2}, $p(x)=1$ for $\mu_{\la \bfh\ra}$-a.e.\ $x$ from the result of \cite{Hi08}. 
Therefore, this guess is also consistent with \Thm{2}.
Such examples show that the problem discussed in this paper is more intricate than it seems.
\end{remark}

\section{Proof of \Thm{1}}
First, we remark on some properties of energy measures associated with strong local Dirichlet forms.
For the proof, see \cite[Theorem~4.3.8]{CF}, for example.
In the statement of \Lem{en}, $\tilde f$ denotes the quasi-continuous modification of $f$.
\begin{lem}\label{lem:en}
For every $f\in\cF$, the push-forward measure of $\mu_{\la f\ra}$ by $\tilde f$ is absolutely continuous with respect to the 1-dimensional Lebesgue measure.
In particular, we have the following.
\begin{enumerate}
\item For any $f\in\cF$, $\mu_{\la f\ra}$ does not have a point mass.
\item For $f,g\in\cF$, $\mu_{\la f\ra}=\mu_{\la g\ra}$ on the set $\{\tilde f=\tilde g\}$.
\end{enumerate}
\end{lem}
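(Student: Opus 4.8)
\emph{Proof proposal.} This lemma is essentially classical, and I would reproduce the standard argument (cf.\ \cite[Theorem~4.3.8]{CF}). The substance is the absolute continuity of $\nu$, defined as the push-forward of $\mu_{\la f\ra}$ by $\tilde f$ -- a finite Borel measure on $\R$ -- and the two displayed consequences then follow formally. First I would reduce to bounded $f$ by a routine truncation (apply the result to $f_n:=(-n)\vee f\wg n$, let $n\to\infty$, using part~(ii) for the $f_n$); note that in the framework of \Thm{1} this step is unnecessary, since (A2) and the compactness of $K$ make every element of $\cF$ bounded. Using the inner regularity of the finite measure $\nu$, it then suffices to show $\nu(B)=0$ for every \emph{compact} $B\subset\R$ with Lebesgue measure zero.

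The only analytic ingredient is the image rule for energy measures: for $\Phi\in C^1(\R)$ with bounded derivative, $\Phi(f)\in\cF$ and $\mu_{\la\Phi(f)\ra}=(\Phi'\circ\tilde f)^2\cdot\mu_{\la f\ra}$, so that $\mu_{\la\Phi(f)\ra}(K)=\int_\R\Phi'(t)^2\,\nu(dt)\le2\cE(\Phi(f))$ -- and since $\Phi'$ is continuous, this requires no prior information about $\nu$. Fixing a compact null set $B$, for each $k\in\N$ I would choose an open $G_k\supseteq B$ with $|G_k|\le1/k$ and a function $\psi_k\in C^1(\R)$ with $\psi_k(0)=0$, $0\le\psi_k'\le1$, $\psi_k'\equiv1$ on a neighbourhood of $B$, and $\operatorname{supp}\psi_k'\subset G_k$. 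Then $\|\psi_k\|_\infty\le|G_k|\le1/k$, so $\psi_k(f)\to0$ in $L^2(K,\mu)$, whereas $2\cE(\psi_k(f))=\int_\R\psi_k'(t)^2\,\nu(dt)\le\nu(\R)$ keeps $\{\psi_k(f)\}_k$ bounded in the Hilbert space $\cF$; hence $\psi_k(f)\to0$ weakly in $\cF$. By Mazur's lemma, for each $n$ there is a finite convex combination $\Psi_n=\sum_k c^{(n)}_k\psi_k$ (with $c^{(n)}_k\ge0$, $\sum_k c^{(n)}_k=1$, and $c^{(n)}_k=0$ for $k<n$) such that $\|\Psi_n(f)\|_\cF<1/n$. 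Since $\Psi_n'\equiv1$ on a neighbourhood of $B$,
\begin{align*}
\nu(B)&\le\int_\R\Psi_n'(t)^2\,\nu(dt)=\mu_{\la\Psi_n(f)\ra}(K)\\
&\le2\cE(\Psi_n(f))\le2\|\Psi_n(f)\|_\cF^2<2/n^2,
\end{align*}
which tends to $0$ as $n\to\infty$; therefore $\nu(B)=0$, i.e.\ $\nu\ll dx$.

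The two consequences are then immediate. For (i), $\{x_0\}\subset\{\tilde f=\tilde f(x_0)\}$, so $\mu_{\la f\ra}(\{x_0\})\le\nu(\{\tilde f(x_0)\})=0$. For (ii), set $h:=f-g$ and $A:=\{\tilde f=\tilde g\}=\{\tilde h=0\}$; bilinearity gives $\mu_{\la f\ra}=\mu_{\la g\ra}+2\mu_{\la g,h\ra}+\mu_{\la h\ra}$, the absolute continuity applied to $h$ gives $\mu_{\la h\ra}(A)=0$ (its push-forward charges only $\{0\}$), and the Kunita--Watanabe inequality for energy measures gives $|\mu_{\la g,h\ra}|(A)\le\mu_{\la g\ra}(A)^{1/2}\mu_{\la h\ra}(A)^{1/2}=0$; hence $\mu_{\la f\ra}$ and $\mu_{\la g\ra}$ agree on $A$.

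I expect the one genuinely delicate point to be the middle step: a mere $L^2$-limit $\psi_k(f)\to0$ with the energies $\cE(\psi_k(f))$ merely bounded does \emph{not} by itself force $\cE(\psi_k(f))\to0$, so one cannot conclude directly. It is exactly the convexification (Mazur's lemma, or equivalently Banach--Saks) -- combined with the observation that the derivative of a convex combination of the $\psi_k$ remains identically $1$ on a neighbourhood of $B$ -- that lets the estimate on $\nu(B)$ collapse. The remaining pieces (the $C^1$ image rule, the truncation reduction, inner regularity, and the Kunita--Watanabe inequality) are standard facts about strong local Dirichlet forms.
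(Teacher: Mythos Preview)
Your argument is correct. The paper itself does not prove \Lem{en}; it simply refers the reader to \cite[Theorem~4.3.8]{CF} and then only uses the two displayed consequences. What you have written is essentially a self-contained version of that standard proof: the chain rule $\mu_{\la\Phi(f)\ra}=(\Phi'\circ\tilde f)^2\mu_{\la f\ra}$ for $\Phi\in C^1$, the construction of the bump primitives $\psi_k$ supported on shrinking open neighbourhoods of the compact null set, and the passage from weak to strong $\cF$-convergence via Mazur's lemma (which is exactly the delicate point you flag) together yield the absolute continuity; your derivations of (i) and (ii) from it are also the standard ones. So there is no disagreement in method to report---you have supplied the proof the paper chose to outsource, and your remark that the truncation step is unnecessary here because (A2) forces $\cF\subset C(K)$ on a compact $K$ is apt.
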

In this paper, we consider only the case $\cF\subset C(K)$; accordingly, any $f\in\cF$ is continuous from the beginning.

We also remark that 
\begin{equation}\label{eq:cE}
\cE(f)=\frac12\mu_{\la f\ra}(K)
\quad\mbox{for any $f\in\cF$}
\end{equation}
(see \cite[Corollary~3.2.1]{FOT}).

For $f,g\in\cF$, a signed measure $\mu_{\la f,g\ra}$ on $K$ is defined as $\mu_{\la f,g\ra}=(\mu_{\la f+g\ra}-\mu_{\la f\ra}-\mu_{\la g\ra})/2$.
It is bilinear in $f,g$ and $\mu_{\la f,f\ra}=\mu_{\la f\ra}$ holds.

In the remainder of this section, we always assume (A1)--(A3).
We state some basic properties in the following series of lemmas.
\begin{lem}\label{lem:1}
Let $m\in\Z_+$ and $\lm\in\Lm_m$.
Then, $U_\lm$ is open, $\overline{U_\lm}\subset U_\lm\cup V_m$, and $\partial U_\lm\subset V_m$.
\end{lem}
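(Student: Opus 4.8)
The plan is to derive all three assertions from elementary point-set topology, using only that $V_m$ is finite and that the index set $\Lm_m$ is finite, as granted by (A1)(i).

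First I would check that $U_\lm$ is open in $K$. Since $V_m$ is a finite subset of the metrizable (hence Hausdorff) space $K$, it is closed, so $K\setminus V_m$ is open in $K$. Each $U_\lm$ is a connected component of $K\setminus V_m$, hence closed in the subspace $K\setminus V_m$; because $\Lm_m$ is finite, $U_\lm=(K\setminus V_m)\setminus\bigcup_{\lm'\ne\lm}U_{\lm'}$ is the complement in $K\setminus V_m$ of a finite union of sets closed in $K\setminus V_m$, hence open in $K\setminus V_m$, and therefore open in $K$. Note that local connectedness of $K$ is not needed here; the finiteness of $\Lm_m$ does the work.

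Next, to prove $\overline{U_\lm}\subset U_\lm\cup V_m$, I would take $x\in\overline{U_\lm}$ with $x\notin V_m$; then $x$ lies in some component $U_{\lm'}$ of $K\setminus V_m$. If $\lm'\ne\lm$, then $U_{\lm'}$ is an open neighborhood of $x$ disjoint from $U_\lm$, contradicting $x\in\overline{U_\lm}$; hence $\lm'=\lm$, i.e., $x\in U_\lm$. This shows $\overline{U_\lm}\setminus V_m\subset U_\lm$, which is the asserted inclusion. Finally, since $U_\lm$ is open we have $\partial U_\lm=\overline{U_\lm}\setminus U_\lm$, so $\partial U_\lm\subset V_m$ follows immediately. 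I do not expect a genuine obstacle in this argument; the only step requiring a little care is the openness of $U_\lm$, where one must invoke the finiteness of $\Lm_m$ rather than appeal to any local connectedness of $K$.
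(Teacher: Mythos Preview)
Your proof is correct and follows essentially the same elementary point-set topology approach as the paper, relying on finiteness of $V_m$ and $\Lm_m$. The only cosmetic difference is the order: you establish openness of $U_\lm$ first (via closedness of components in the subspace) and then the closure inclusion, whereas the paper first proves $\overline{U_\lm}\cap U_\kp=\emptyset$ for $\kp\ne\lm$ via a connectedness-of-unions argument and derives openness from $K\setminus U_\lm=V_m\cup\bigcup_{\kp\ne\lm}\overline{U_\kp}$ being closed.
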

\begin{proof}
If $\overline{U_\lm}\cap U_{\kappa}\ne\emptyset$ for some $\kappa\in \Lm_m\setminus\{\lm\}$, then $U_\lm\cup U_{\kappa}$ is connected (see, e.g., \cite[Theorem~6.1.9]{En}), which is a contradiction.
Thus, $\overline{U_\lm}\subset K\setminus\bigcup_{\kappa\in \Lm_m\setminus\{\lm\}}U_{\kappa}= U_\lm\cup V_m$.
Similarly, we have $K\setminus U_\lm=V_m\cup \bigcup_{\kappa\in \Lm_m\setminus\{\lm\}}\overline{U_{\kappa}}$, which is a closed set.
Since $\partial U_\lm=\overline{U_\lm}\setminus{U_\lm}\subset V_m$, the last claim follows.
\end{proof}
\begin{lem}\label{lem:connectedness}
$K$ is arcwise connected.
\end{lem}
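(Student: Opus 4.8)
The plan is to deduce the arcwise connectedness of $K$ from two intermediate facts, namely that $K$ is connected and that $K$ is locally connected; once both are in hand, the conclusion is the classical assertion that every compact, connected, and locally connected metric space is arcwise connected (see, e.g., \cite{En}). Since $K$ is compact and metrizable by hypothesis, only connectedness and local connectedness need to be proved.

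Connectedness is the only step using the Dirichlet form, and I would argue it by contradiction. Suppose $K=A\cup B$ with $A,B$ nonempty, disjoint, and closed, so that $A$ and $B$ are also open. By the regularity of $(\cE,\cF)$, the space $\cF$ is dense in $C(K)$ with respect to the uniform norm, so we may choose $g\in\cF$ with $\sup_K|g-\bfone_A|<1/4$. Since $(\cE,\cF)$ is a Dirichlet form and constant functions lie in $\cF$ (by (A3)), the function $f:=\bigl((4g-1)\vee 0\bigr)\wg 1$ also lies in $\cF$, and because $4g-1>1$ on $A$ while $4g-1<0$ on $B$ one has $f=\bfone_A$ exactly. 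As $\cF\subset C(K)$, we may take $\tilde f=f$, so $\{\tilde f=1\}=A$ and $\{\tilde f=0\}=B$. Now apply \Lem{en}~(ii) to $f$ together with the constant functions $1$ and $0$: since the energy measure of a constant function is the zero measure (by (A3) and \Eq{cE}), it follows that $\mu_{\la f\ra}(A)=0$ and $\mu_{\la f\ra}(B)=0$, whence $\mu_{\la f\ra}(K)=0$. By \Eq{cE} this forces $\cE(f)=0$, so $f$ is constant by (A3), contradicting that $A$ and $B$ are nonempty and disjoint. Hence $K$ is connected.

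Local connectedness uses only (A1). For $x\in K$ and $m\in\Z_+$, set $N_m(x):=\bigcup\{\overline{U_\lm}\mid\lm\in\Lm_m,\ x\in\overline{U_\lm}\}$. Each $U_\lm$ is a connected component of $K\setminus V_m$, hence connected, so each $\overline{U_\lm}$ is connected; and since every $\overline{U_\lm}$ entering the union contains the common point $x$, the set $N_m(x)$ is connected. By the second clause of (A1), $\{N_m(x)\}_{m\in\Z_+}$ is a fundamental system of neighborhoods of $x$; therefore $K$ admits a basis of connected neighborhoods at every point, which is to say $K$ is locally connected. Together with the cited topological theorem, this completes the proof.

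The main obstacle is the connectedness step — more precisely, producing a genuine element of $\cF$ that coincides with $\bfone_A$ and then showing that its energy vanishes; the remainder is routine point-set topology resting on the properties of energy measures already recorded in \Lem{en} and \Eq{cE}.
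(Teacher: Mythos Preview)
Your proof is correct and follows the same route as the paper: establish that $K$ is connected (the paper invokes strong locality directly to obtain $\cE(\bfone_{K_1})=0$, whereas you deduce the same via \Lem{en}~(ii), which is itself a manifestation of strong locality), then combine compactness, metrizability, and local connectedness with the classical theorem. One small quibble: the membership of constants in $\cF$ follows from regularity on a compact space (approximate $1$ uniformly by some $g\in\cF$ and note that $(2g)\wg 1=1\in\cF$), not from (A3), which concerns only elements already in $\cF$.
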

\begin{proof}
First, we prove that $K$ is connected. If $K$ is a disjoint union of nonempty open sets $K_1$ and $K_2$, then $\bfone_{K_1}\in \cF$ from the regularity of $(\cE,\cF)$. From the strong locality of $(\cE,\cF)$, $\cE(\bfone_{K_1})=0$, which contradicts (A3).
 
Then, $K$ is a compact, metrizable, connected, and locally connected space, which implies that $K$ is arcwise connected (see, e.g., \cite[Section~6.3.11]{En}).
\end{proof}
For a subset $U$ of $K$, let $\diam U$ denote the diameter of $U$ with respect to the metric $d_K$.
\begin{lem}\label{lem:diam}
As $m\to\infty$, $\sup\{\diam \overline{U_\lm}\mid \lm\in\Lm_m\}$ converges to $0$.
\end{lem}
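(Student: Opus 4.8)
The plan is to combine compactness of $K$ with the ``shrinking'' part of condition~(A1)(ii). For $x\in K$ and $m\in\Z_+$, set
\[
N_m(x):=\bigcup_{\lm\in\Lm_m;\ x\in\overline{U_\lm}}\overline{U_\lm},
\]
so that, by (A1)(ii), the family $\{N_m(x)\}_m$ is a fundamental system of neighborhoods of $x$. The first step is to note that $\diam N_m(x)\to 0$ as $m\to\infty$ for each fixed $x$: since $d_K$ is compatible with the topology, for any $r>0$ the ball $\{y\in K\mid d_K(x,y)<r\}$ is a neighborhood of $x$, hence it contains some $N_m(x)$, and then $\diam N_m(x)\le 2r$. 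Consequently, given $\eps>0$, for each $x$ I can fix $m(x)\in\Z_+$ with $\diam N_{m(x)}(x)\le\eps$.

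Next I would globalize this. Each $N_{m(x)}(x)$ is a neighborhood of $x$, so $\{(N_{m(x)}(x))^\circ\}_{x\in K}$ is an open cover of the compact set $K$; take a finite subcover indexed by $x_1,\dots,x_n$ and put $M:=\max_{1\le i\le n}m(x_i)$. I claim $\diam\overline{U_\lm}\le\eps$ for every $m\ge M$ and every $\lm\in\Lm_m$, which, as $\eps>0$ is arbitrary and each $\Lm_m$ is finite, is exactly the assertion of the lemma. To prove the claim, pick any $z\in U_\lm$; this set is nonempty and, by \Lem{1}, open. Then $z\in N_{m(x_i)}(x_i)$ for some $i$, so $z\in\overline{U_\kappa}$ for some $\kappa\in\Lm_{m(x_i)}$ with $x_i\in\overline{U_\kappa}$. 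Since $U_\lm$ is an open neighborhood of $z$ and $U_\kappa$ is dense in $\overline{U_\kappa}$, we get $U_\lm\cap U_\kappa\ne\emptyset$; because $m(x_i)\le M\le m$, the nesting property of the cells (a connected subset of $K\setminus V_{m(x_i)}$ lies in a single component) forces $U_\lm\subset U_\kappa$. Therefore $\overline{U_\lm}\subset\overline{U_\kappa}\subset N_{m(x_i)}(x_i)$, and so $\diam\overline{U_\lm}\le\diam N_{m(x_i)}(x_i)\le\eps$.

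The one step I expect to require care is the final matching of an arbitrary fine cell with a cell of the fixed finite cover. The naive route --- choosing points in ever-finer cells whose diameters are bounded below, extracting a convergent subsequence, and comparing with a small neighborhood of the limit --- breaks down because a point of some $V_{m_0}$ can lie in the closures of several distinct level-$m_0$ cells, so mere membership in $N_{m_0}(x)$ does not single out a cell containing the fine cell through that point. Fixing the finite cover first and then using openness of $U_\lm$ together with the nesting property to upgrade ``intersects'' to ``is contained in'' sidesteps this ambiguity cleanly. Everything else --- that the $N_m(x)$ are genuine neighborhoods, and the finiteness of $\Lm_m$ --- is immediate from the framework and from \Lem{1}.
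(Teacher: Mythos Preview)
Your proof is correct and follows essentially the same route as the paper: choose for each $x$ a level $m(x)$ with $\diam N_{m(x)}(x)\le\eps$, extract a finite subcover by compactness, set $M=\max_i m(x_i)$, and show every level-$m$ cell with $m\ge M$ sits inside one of the finitely many $N_{m(x_i)}(x_i)$. The paper asserts this last inclusion in one line, whereas you spell it out carefully via openness of $U_\lm$ and the nesting property $U_\lm\cap U_\kappa\ne\emptyset\Rightarrow U_\lm\subset U_\kappa$ for $m\ge m(x_i)$; this is exactly the justification the paper leaves implicit.
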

\begin{proof}
Let $\eps>0$. From (A1)(ii), for each $x\in K$, there exists $m(x)\in \Z_+$ such that $\diam N_x\le \eps$, where $N_x:=\bigcup_{\lm\in \Lm_{m(x)};\, x\in\overline{U_\lm}}\overline{U_\lm}$.
Since $K$ is compact and covered with $\bigcup_{x\in K}N_x^\circ$, $K$ is described as $\bigcup_{j=1}^k N_{x_j}^\circ$ for some $\{x_j\}_{j=1}^k\subset K$. 
Let $M=\max\{m(x_j)\mid j=1,\dots,k\}$.
Then, for each $m\ge M$ and $\lm\in\Lm_m$, $U_\lm$ is a subset of some $N_{x_j}$, which implies that $\diam U_\lm\le \eps$. 
This indicates the claim.
\end{proof}

\begin{lem}\label{lem:lh}
For a continuous curve $\gamma\in C([s,t]\to K)$,
\begin{align*}
 &\ell_\bfh(\gamma)\\
 &=\sup\left\{\sum_{i=1}^n|\bfh(\gamma(t_{i}))-\bfh(\gamma(t_{i-1}))|_{\R^N}\biggm|
\begin{array}{l}n\in\N,\ s=t_0<t_1<\dots<t_n=t,\\\gm(t_i)\in V_*\text{ for every }i=1,\dots,n-1\end{array}\right\}.
\end{align*}
\end{lem}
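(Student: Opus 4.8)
The plan is to prove the nontrivial direction, namely that the right-hand side supremum is at least $\ell_\bfh(\gm)$, since the reverse inequality is immediate because the right-hand side is a supremum over a restricted class of partitions. So fix a partition $s=t_0<t_1<\dots<t_n=t$; I want to show that $\sum_{i=1}^n|\bfh(\gm(t_i))-\bfh(\gm(t_{i-1}))|_{\R^N}$ is approximated from above (up to $\eps$) by a sum over a partition whose interior nodes all land in $V_*$. The key tool is \Lem{diam} together with density of $V_*$ in $K$ (noted in the text right after the statement of the assumptions): every point of $K$ can be approximated arbitrarily well by points of $V_*$, and moreover the modulus of continuity of $\bfh\circ\gm$ is controlled since $\bfh$ is continuous on the compact set $K$ and $\gm$ is continuous on $[s,t]$.

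The concrete argument I would carry out: given $\eps>0$, choose $\dl>0$ so that $|\bfh(x)-\bfh(y)|_{\R^N}<\eps/(2n)$ whenever $d_K(x,y)<\dl$ (uniform continuity of $\bfh$ on $K$), and choose $\eta>0$ so that $|u-t_i|<\eta$ implies $d_K(\gm(u),\gm(t_i))<\dl/2$ for each $i$ (uniform continuity of $\gm$). For each interior index $i\in\{1,\dots,n-1\}$ I need to perturb $t_i$ to a nearby $t_i'$ with $\gm(t_i')\in V_*$. This is where density of $V_*$ alone is not quite enough — I need a point of $V_*$ that is simultaneously close to $\gm(t_i)$ \emph{and} of the form $\gm(u)$ for some $u$ near $t_i$. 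So the main obstacle is that $\gm$ need not pass through points of $V_*$ near $\gm(t_i)$ at all; a generic continuous curve could avoid the countable dense set $V_*$ entirely except at finitely many parameter values.

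To get around this, I would instead pick for each $i$ a point $q_i\in V_*$ with $d_K(\gm(t_i),q_i)<\dl/2$ (possible by density of $V_*$) and then use arcwise connectedness of $K$ more carefully; but a cleaner route is: since $\{\overline{U_\lm}\}_{\lm\in\Lm_m}$ with boundaries in $V_m$ form a finer and finer cover by \Lem{diam}, pick $m$ large enough that $\diam\overline{U_\lm}<\dl$ for all $\lm\in\Lm_m$. The curve $\gm$, restricted to a small parameter interval around $t_i$, stays inside $N_{\gm(t_i)}:=\bigcup_{\lm\in\Lm_m;\,\gm(t_i)\in\overline{U_\lm}}\overline{U_\lm}$, and if $\gm(t_i)\notin V_m$ then $\gm(t_i)\in U_\lm$ for exactly one $\lm$ with $\gm(t_i)\in\overline{U_\lm}$. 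As one moves the parameter away from $t_i$, either $\gm$ hits $\partial U_\lm\subset V_m\subset V_*$ for some parameter value $t_i'$ arbitrarily close to $t_i$ (\Lem{1}), in which case we are done for that index; or $\gm$ stays in $U_\lm$ throughout a fixed neighborhood of $t_i$. In the latter case, repeat with $m+1,m+2,\dots$: either some shrinking $U_{\lm}$ has $\gm$ exiting through its boundary (giving a $V_*$-point near $t_i$), or $\gm(t_i)$ lies in the intersection $\bigcap_m U_{\lm_m}$, which by \Lem{diam} is a single point equal to $\gm(t_i)$; but (A1)(ii) then forces $\gm$ to be eventually constant near $t_i$ only if $\gm(t_i)$ is itself isolated, which it is not.

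Actually the cleanest packaging: it suffices to handle each interior $t_i$ independently, and for a single $t_i$ I would argue that for every neighborhood $W$ of $t_i$ in $[s,t]$ there exists $t_i'\in W$ with $\gm(t_i')\in V_*$, \emph{or} $\gm$ is constant on a neighborhood of $t_i$. If $\gm$ is constant near $t_i$, I can simply delete $t_i$ from the partition without changing the sum (the term for $i$ and the adjacent telescoping are unaffected since $\gm(t_i)=\gm(t_{i-1})$ or $=\gm(t_{i+1})$ after a further harmless shift). Otherwise, pick $t_i'$ with $\gm(t_i')\in V_*$ and $d_K(\gm(t_i'),\gm(t_i))<\dl$; replacing $t_i$ by $t_i'$ changes the two relevant terms by at most $2\cdot\eps/(2n)$ in total, hence the whole sum by at most $\eps/n$ per index, $\eps$ overall; and the new sum, being a sum over a valid partition, is bounded by the right-hand side. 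Iterating over $i=1,\dots,n-1$ (reordering the $t_i'$ if necessary, which only decreases nothing by the triangle inequality) yields $\sum_{i=1}^n|\bfh(\gm(t_i))-\bfh(\gm(t_{i-1}))|_{\R^N}\le \text{(RHS)}+\eps$, and letting $\eps\to0$ then taking the supremum over the original partitions gives $\ell_\bfh(\gm)\le\text{(RHS)}$. The main obstacle, as flagged, is justifying the dichotomy ``either a $V_*$-point occurs at parameters arbitrarily close to $t_i$, or $\gm$ is locally constant there,'' which is exactly where \Lem{1}, \Lem{diam}, and (A1)(ii) combine.
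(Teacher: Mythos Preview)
Your argument is essentially the paper's, just unpacked: the paper's entire proof is the one-line assertion that $\{u\in[s,t]\mid\gm(u)\in V_*\}$ is dense in $[s,t]$ whenever $\gm$ is not constant on any subinterval, and your dichotomy together with the cell-exit reasoning via \Lem{1}, \Lem{diam}, and (A1)(ii) is precisely the justification of that density statement.

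One imprecision worth flagging: in the locally-constant case you propose to ``delete $t_i$ from the partition without changing the sum \dots since $\gm(t_i)=\gm(t_{i-1})$ or $=\gm(t_{i+1})$ after a further harmless shift''. Deleting $t_i$ can strictly decrease the partition sum by the triangle inequality, which goes the wrong way for the inequality you need, and shifting $t_i$ within its constant interval does not force $\gm(t_i)$ to coincide with a neighboring value. The clean fix---implicit in the paper's hypothesis ``if $\gm$ is not constant on any nonempty intervals''---is either to first collapse every maximal constant interval of $\gm$ by a monotone reparametrization (which leaves both sides of the identity unchanged), or equivalently to move $t_i$ to the endpoint $a$ of its maximal constant interval (the sum is unchanged since $\gm(a)=\gm(t_i)$) and observe that at $a$ the curve is not locally constant on one side, so the other branch of your dichotomy now applies there.
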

\begin{proof}
This is evident from the fact that the set $\{u\in[s,t]\mid \gm(u)\in V_*\}$ is dense in $[s,t]$ if $\gm$ is not constant on any nonempty intervals.
\end{proof}

\begin{lem}\label{lem:Hm}
Let $V$ be a finite and nonempty subset of $K$ and $u\in l(V)$.
Then, there exists a unique function $g\in\cF$ such that $g$ attains the infimum of the set $\{\cE(f)\mid f\in \cF,\ f=u\text{ on }V\}$.
\end{lem}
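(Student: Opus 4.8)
The plan is to realize $g$ as the minimizer of the convex functional $\cE$ over the affine set $\mathcal{A}_u:=\{f\in\cF\mid f=u\text{ on }V\}$ inside the Hilbert space $(\cF,(\cdot,\cdot)_\cF)$, by the direct method. So I first check that $\mathcal{A}_u$ is nonempty and closed. Consider the restriction map $T\colon\cF\to l(V)$, $Tf=f|_V$; it is linear, and bounded since $\cF$ is continuously embedded in $C(K)$ by (A2). Its range is a linear subspace of the finite-dimensional space $l(V)$, hence closed; and it is dense, for given $u\in l(V)$ one picks $\ph\in C(K)$ with $\ph|_V=u$ (possible since $V$ is finite), approximates $\ph$ uniformly by elements of $\cF$ (the regularity of $(\cE,\cF)$ together with $\cF\subset C(K)$ gives density of $\cF$ in $C(K)$ as $K$ is compact), and lets the approximants converge at each point of $V$. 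Hence $T$ is surjective, so $\mathcal{A}_u=T^{-1}(u)$ is nonempty, and it is closed because $T$ is continuous.

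Next, set $\alpha:=\inf\{\cE(f)\mid f\in\mathcal{A}_u\}\in[0,\infty)$ and take a minimizing sequence $\{f_n\}\subset\mathcal{A}_u$. The step that needs care — and the only real obstacle — is that $\{f_n\}$ need not a priori be bounded in $\cF$, since only $\cE(f_n)$ is controlled; the remedy is truncation. With $M:=\max_{q\in V}|u(q)|$, set $f_n^M:=(-M)\vee(f_n\wg M)$. Then $f_n^M\in\cF$ and $\cE(f_n^M)\le\cE(f_n)$, because $t\mapsto(-M)\vee(t\wg M)$ is a normal contraction and $\cE$ is Markovian, while $f_n^M=u$ on $V$ (as $|u(q)|\le M$) and $\|f_n^M\|_\infty\le M$. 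From $\alpha\le\cE(f_n^M)\le\cE(f_n)\to\alpha$ we may replace $f_n$ by $f_n^M$, so that $\|f_n\|_\infty\le M$ for all $n$; then $\|f_n\|_\cF^2=\cE(f_n)+\int_K f_n^2\,d\mu\le\sup_n\cE(f_n)+M^2\mu(K)<\infty$.

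Since $\cF$ is a Hilbert space, a subsequence of $\{f_n\}$ converges weakly in $\cF$ to some $g$. Point evaluations at the points of $V$ are continuous linear functionals on $\cF$ (again by (A2)), whence $g|_V=u$, i.e.\ $g\in\mathcal{A}_u$, so $\cE(g)\ge\alpha$. On the other hand $f\mapsto\cE(f)$ is a nonnegative quadratic form that is norm-continuous on $\cF$, hence convex and weakly lower semicontinuous, so $\cE(g)\le\liminf_n\cE(f_n)=\alpha$; thus $\cE(g)=\alpha$ and $g$ is a minimizer. For uniqueness, if $g,g'$ both attain $\alpha$ then $(g+g')/2\in\mathcal{A}_u$, so the parallelogram identity for the quadratic form $\cE$ gives $2\cE\bigl((g-g')/2\bigr)=\cE(g)+\cE(g')-2\cE\bigl((g+g')/2\bigr)\le 2\alpha-2\alpha=0$; hence $\cE(g-g')=0$, so $g-g'$ is constant by (A3), and since it vanishes on the nonempty set $V$ we conclude $g=g'$.
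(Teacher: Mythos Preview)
Your proof is correct and follows essentially the same direct-method approach as the paper: establish that the constraint set is nonempty, produce a bounded minimizing sequence by truncation, extract a limit in $\cF$, and conclude uniqueness via the parallelogram identity together with (A3). The only technical difference is in the compactness step: the paper passes to Ces\`aro means of a subsequence to get strong convergence in $\cF$, whereas you use weak compactness together with weak lower semicontinuity of the convex functional $\cE$; both are standard and equally valid here.
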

\begin{proof}
The proof is standard.
From the regularity of $(\cE,\cF)$, for each $p\in V$, there exists $g\in \cF$ such that $g(p)=1$ and $g(q)=0$ for all $q\in V\setminus\{p\}$. Therefore, the set in the statement is nonempty.
Take functions $\{f_n\}$ from $\cF$ such that $f_n=u$ on $V$, $\min_{x\in V}u(x)\le f_n\le \max_{x\in V}u(x)$, and $\cE(f_n)$ converges to $\inf\{\cE(f)\mid f\in \cF,\ f=u\text{ on }V\}$.
Since $\{f_n\}$ is bounded in $\cF$, we can take a subsequence of $\{f_n\}$ such that its Ces\`aro means converge to some $g$ in $\cF$.
Then, $g$ attains the infimum.
If another $g'\in\cF$ attains the infimum, then $\cE(g-g')=2\cE(g)+2\cE(g')-4\cE((g+g')/2)\le0$. 
From (A3), $g-g'$ is a constant function. Since $g-g'=0$ on $V$, we conclude that $g=g'$, which ensures uniqueness of the minimizer.
\end{proof}
For $m\in\Z_+$ and $u\in l(V_m)$, let $H_m u$ denote the function $g$ in the above lemma with $V=V_m$.
For $m\in\Z_+$ and $f\in \cF$, let $H_m f$ denote $H_m(f|_{V_m})$ by abuse of notation.
The linearity of $H_m$ is a basic fact and the proof is omitted.
\begin{lem}\label{lem:zero}
Let $U$ be an open set of $K$, and let $f$ be a function in $\cF$ such that $f=0$ on $\partial U$.
Then, the function $f\cdot\bfone_{U}$ belongs to $\cF$.
\end{lem}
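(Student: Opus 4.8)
The plan is to realize $f\cdot\bfone_U$ as an $L^2$-limit of functions $g_\eps\cdot\bfone_U$ that manifestly lie in $\cF$, with energies bounded uniformly in $\eps$, and then conclude by the closedness of $\cE$. Note first that $f\cdot\bfone_U$ is continuous: it agrees with $f$ on the open set $U$ and with $0$ on the open set $K\setminus\overline U$, while at a point $x\in\partial U$ one has $f(x)=0$ and $|f\cdot\bfone_U(y)|\le|f(y)|\to0$ as $y\to x$; since $\cF\subset C(K)$, continuity is a prerequisite for membership in $\cF$.

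For $\eps>0$, let $g_\eps:=\Phi_\eps\circ f$, where $\Phi_\eps(s)=\operatorname{sign}(s)\,(|s|-\eps)^+$. Since $\Phi_\eps$ is a normal contraction, $g_\eps\in\cF$ and $\cE(g_\eps)\le\cE(f)$. Because $f$ vanishes on $\partial U$, the compact set $\overline{\{g_\eps\ne0\}}=\overline{\{|f|>\eps\}}$ is disjoint from $\partial U$, so $A_\eps:=\overline{\{g_\eps\ne0\}}\cap U$ is a compact subset of the open set $U$. Using the regularity of $(\cE,\cF)$ — as in the proof of \Lem{Hm} — I would take $\ph_\eps\in\cF$ with $0\le\ph_\eps\le1$, $\ph_\eps\equiv1$ on $A_\eps$, and $\{\ph_\eps\ne0\}\subset U$. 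As $K$ is compact, $\cF\subset C(K)$ is an algebra, so $g_\eps\ph_\eps\in\cF$, and a direct check shows $g_\eps\ph_\eps=g_\eps\cdot\bfone_U$: on $A_\eps$ both equal $g_\eps$; on $K\setminus U$ both vanish (there $\bfone_U=0$ and $\ph_\eps=0$); and off $A_\eps$ one has $g_\eps=0$. Hence $g_\eps\cdot\bfone_U\in\cF$ for every $\eps>0$.

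It remains to bound $\cE(g_\eps\cdot\bfone_U)$ uniformly and pass to the limit. By \Eq{cE} it suffices to bound $\mu_{\la g_\eps\cdot\bfone_U\ra}(K)$, which I split along $K=U\cup\partial U\cup(K\setminus\overline U)$. The function $g_\eps\cdot\bfone_U$ vanishes identically on $K\setminus\overline U$, and also on a small open neighborhood of the compact set $\partial U$ disjoint from $\overline{\{|f|>\eps\}}$, so strong locality forces $\mu_{\la g_\eps\cdot\bfone_U\ra}\bigl((K\setminus\overline U)\cup\partial U\bigr)=0$; on $U$ it agrees with $g_\eps$, so \Lem{en}(ii) gives $\mu_{\la g_\eps\cdot\bfone_U\ra}=\mu_{\la g_\eps\ra}$ there. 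Therefore $\mu_{\la g_\eps\cdot\bfone_U\ra}(K)=\mu_{\la g_\eps\ra}(U)\le2\cE(g_\eps)\le2\cE(f)$, i.e.\ $\cE(g_\eps\cdot\bfone_U)\le\cE(f)$ for all $\eps>0$. Since $g_\eps\to f$ pointwise with $|g_\eps|\le|f|$, we get $g_\eps\cdot\bfone_U\to f\cdot\bfone_U$ in $L^2(K,\mu)$, and as $\{g_\eps\cdot\bfone_U\}_\eps$ is bounded in the Hilbert space $\cF$, a weak-compactness (Ces\`aro means/Banach--Saks) argument of the kind used in the proof of \Lem{Hm} identifies the limit and yields $f\cdot\bfone_U\in\cF$. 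I expect the uniform energy estimate to be the only step beyond routine Dirichlet-form bookkeeping: flattening $f$ near $\partial U$ and then restricting to $U$ could a priori increase the energy, and what rescues the bound is precisely the localization of energy measures — \Lem{en}(ii) on $U$ together with strong locality annihilating the contributions of $\partial U$ and of $K\setminus\overline U$. The construction of the cutoff $\ph_\eps$ is a standard consequence of regularity, of the same type already used above.
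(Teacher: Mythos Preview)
Your proof is correct and follows essentially the same route as the paper's: truncate $f$ away from zero so that it vanishes on a neighborhood of $\partial U$, multiply by a regular cutoff to realize the restriction to $U$ as an element of $\cF$, bound the energy via \Lem{en}(ii) and \Eq{cE}, and pass to the limit by weak compactness in $\cF$. The only cosmetic difference is that the paper first reduces to $f\ge0$ and uses the one-sided truncation $f_\eps=(f-\eps)\vee0$, whereas you work directly with the two-sided normal contraction $\Phi_\eps$; the rest of the argument is the same.
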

\begin{proof}
We may assume that $f$ is nonnegative.
For $\eps>0$, let $f_\eps=(f-\eps)\vee0$.
Then, $f_\eps=0$ on a certain open neighborhood $O_\eps$ of $\partial U$.
Take $\ph_\eps\in\cF$ such that $\ph_\eps=1$ on $U\setminus O_\eps$ and $\ph_\eps=0$ on $K\setminus \overline{U}$.
Then, $f_\eps\cdot\bfone_U=f_\eps\ph_\eps\in\cF$.
From \Lem{en} and \Eq{cE},
\[
\cE(f_\eps\cdot\bfone_U)=\frac12\mu_{\la f_\eps\cdot\bfone_U\ra}(U)
=\frac12\mu_{\la f_\eps\ra}(U)
\le \cE(f_\eps)\le \cE(f).
\]
Therefore, $\{f_{1/n}\cdot\bfone_U\}_{n\in\N}$ is bounded in $\cF$ and has a weakly convergent sequence in $\cF$. Since $f_\eps\cdot\bfone_U$ converges to $f\cdot\bfone_U$ pointwise as $\eps\downarrow0$, we conclude that $f\cdot\bfone_U\in \cF$.
\end{proof}
\begin{lem}\label{lem:min}
For $m\in\Z_+$, $\lm\in\Lm_m$, and $f\in\cF$, we have $\mu_{\la H_mf\ra}(U_\lm)\le \mu_{\la f\ra}(U_\lm)$.
\end{lem}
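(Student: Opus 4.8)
The plan is to compare $H_m f$ with the function $\tilde f$ that keeps the values of $f$ on $\overline{U_\lm}$ and those of $H_m f$ outside $U_\lm$, and then to exploit the minimality of $H_m f$ together with strong locality. Concretely, I would set
\[
\tilde f := H_m f - (H_m f - f)\cdot\bfone_{U_\lm}.
\]
Since $\partial U_\lm\subset V_m$ by \Lem{1} and $H_m f = f$ on $V_m$, the function $H_m f - f$ vanishes on $\partial U_\lm$; hence \Lem{zero}, applied to $H_m f - f$ and the open set $U_\lm$, gives $(H_m f - f)\cdot\bfone_{U_\lm}\in\cF$, and so $\tilde f\in\cF$. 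By construction $\tilde f = f$ on $U_\lm$ and $\tilde f = H_m f$ on $K\setminus U_\lm$, the two prescriptions agreeing on $\partial U_\lm\subset V_m$ because $H_m f = f$ there. Since $V_m\subset K\setminus U_\lm$, this gives $\tilde f = f$ on $V_m$, so $\tilde f$ is admissible in the minimization defining $H_m f$ and therefore $\cE(H_m f)\le\cE(\tilde f)$.

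Next I would turn this energy inequality into the desired inequality of measures. The set $V_m$ is finite, so by \Lem{en}(i) every $g\in\cF$ satisfies $\mu_{\la g\ra}(V_m) = 0$, hence $\mu_{\la g\ra}(\partial U_\lm) = 0$; combining this with \Eq{cE},
\[
2\cE(g) = \mu_{\la g\ra}(U_\lm) + \mu_{\la g\ra}(K\setminus\overline{U_\lm})
\qquad\text{for }g = H_m f\text{ and }g = \tilde f.
\]
Because $\tilde f = f$ on the open set $U_\lm$ and $\tilde f = H_m f$ on the open set $K\setminus\overline{U_\lm}$ (and all functions in $\cF$ are continuous here), \Lem{en}(ii) gives $\mu_{\la\tilde f\ra}(U_\lm) = \mu_{\la f\ra}(U_\lm)$ and $\mu_{\la\tilde f\ra}(K\setminus\overline{U_\lm}) = \mu_{\la H_m f\ra}(K\setminus\overline{U_\lm})$. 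Substituting these into $\cE(H_m f)\le\cE(\tilde f)$ and cancelling the common finite term $\mu_{\la H_m f\ra}(K\setminus\overline{U_\lm})$ yields $\mu_{\la H_m f\ra}(U_\lm)\le\mu_{\la f\ra}(U_\lm)$, which is the claim.

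I do not expect a genuine obstacle here: the construction reduces everything to the membership $\tilde f\in\cF$, which is exactly what \Lem{zero} is designed to supply, and the gluing of the two pieces occurs along the finite set $V_m$, which is invisible to every energy measure by \Lem{en}(i). The only bookkeeping to watch is that all the masses involved are finite (since $\cE(H_m f)<\infty$), so the cancellation is legitimate.
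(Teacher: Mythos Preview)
Your proof is correct and follows essentially the same approach as the paper: your function $\tilde f$ is precisely the paper's comparison function $g = f\cdot\bfone_{U_\lm} + (H_m f)\cdot\bfone_{K\setminus U_\lm}$, and both arguments conclude by combining the minimality of $H_m f$ with \Lem{en} and \Lem{zero}. You simply spell out in more detail the use of \Lem{en}(i) on the finite set $\partial U_\lm$ and the cancellation step.
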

\begin{proof}
  Let $g=f\cdot\bfone_{U_\lm}+(H_m f)\cdot\bfone_{K\setminus U_\lm}$. 
  Since $g=H_mf+(f-H_mf)\cdot\bfone_{U_\lm}$, $g=f$ on $V_m$ and $g\in \cF$ by \Lem{zero}. 
  By combining the inequality $\mu_{\la H_m f\ra}(K)\le \mu_{\la g\ra}(K)$ and \Lem{en}, the claim holds.
\end{proof}
\begin{lem}\label{lem:expression}
Let $m\in \Z_+$ and $\lm\in \Lm_m$. Then, there exists a set $\{b_{pq}\}_{p,q\in \partial U_\lm}$ of real numbers such that $b_{pq}=b_{qp}\ge0$ for all $p\ne q$ and for every $f\in\cF$,
\[
\mu_{\la H_mf\ra}(U_\lm)=\frac12\sum_{p,q\in\partial U_\lm}b_{pq}(f(p)-f(q))^2.
\]
\end{lem}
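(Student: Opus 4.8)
The plan is to realize $\mu_{\la H_m f\ra}(U_\lm)$ as a discrete Dirichlet energy on the finite set $\partial U_\lm$ (recall $\partial U_\lm\subset V_m$ by \Lem{1}). The first ingredient is a \emph{localization}: I would show that $H_m f|_{\overline{U_\lm}}$ is the unique minimizer of $g\mapsto\mu_{\la g\ra}(U_\lm)$ among $g\in\cF$ with $g=f$ on $\partial U_\lm$. Minimality follows by the argument of \Lem{min}: given such a competitor $g$, the function equal to $g$ on $\overline{U_\lm}$ and to $H_m f$ on $K\setminus U_\lm$ lies in $\cF$ by \Lem{zero} (the pieces agree on $\partial U_\lm$, so $(g-H_m f)\bfone_{U_\lm}\in\cF$), it coincides with $f$ on $V_m$, and $\cE(H_m f)\le\cE(\text{it})$ together with \eqref{eq:cE} and the absence of point masses (\Lem{en}) yields $\mu_{\la H_m f\ra}(U_\lm)\le\mu_{\la g\ra}(U_\lm)$. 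For uniqueness, if $g_1,g_2$ both minimize, the parallelogram identity gives $\mu_{\la g_1-g_2\ra}(U_\lm)=0$, so $(g_1-g_2)\bfone_{U_\lm}\in\cF$ (again \Lem{zero}) has vanishing energy (using \Lem{en} on the finite set $\partial U_\lm$), hence is constant by (A3), hence $\equiv0$. Writing $\ph_p:=H_m(\bfone_{\{p\}})|_{\overline{U_\lm}}$ for $p\in\partial U_\lm$, where $\bfone_{\{p\}}\in l(V_m)$ equals $1$ at $p$ and $0$ elsewhere on $V_m$, linearity of $H_m$ gives $H_m f=\sum_{p\in\partial U_\lm}f(p)\ph_p$ on $\overline{U_\lm}$; in particular $\mu_{\la H_m f\ra}(U_\lm)$ depends only on $f|_{\partial U_\lm}$, and
\[
\mu_{\la H_m f\ra}(U_\lm)=\sum_{p,q\in\partial U_\lm}a_{pq}f(p)f(q),\qquad a_{pq}:=\mu_{\la\ph_p,\ph_q\ra}(U_\lm)=a_{qp}.
\]

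Next, applying the localization to the constant datum and invoking (A3) again shows $\sum_{p\in\partial U_\lm}\ph_p\equiv1$ on $\overline{U_\lm}$. Since adding a constant leaves an energy measure unchanged, $\mu_{\la\bfone,g\ra}=0$ for every $g\in\cF$, and hence every row sum $\sum_{p}a_{pq}=\mu_{\la\sum_p\ph_p,\ph_q\ra}(U_\lm)$ vanishes. For a symmetric matrix with zero row sums one has the elementary identity $\sum_{p,q}a_{pq}x_px_q=-\tfrac12\sum_{p\ne q}a_{pq}(x_p-x_q)^2$; hence, setting $b_{pq}:=-a_{pq}$ for $p\ne q$ and $b_{pp}:=0$, we obtain $b_{pq}=b_{qp}$ and the desired identity $\mu_{\la H_m f\ra}(U_\lm)=\tfrac12\sum_{p,q\in\partial U_\lm}b_{pq}(f(p)-f(q))^2$.

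It remains to check $b_{pq}\ge0$, i.e.\ $a_{pq}\le0$ for $p\ne q$; this is the crux, since positivity of the form $(a_{pq})$ by itself does not fix the off-diagonal signs, and one must transfer the sub-Markovian structure of $(\cE,\cF)$ to the single cell. First, for the unit contraction $\Phi(t):=(0\vee t)\wg1$ and any $g\in\cF$, I claim $\mu_{\la\Phi\circ g\ra}(U_\lm)\le\mu_{\la g\ra}(U_\lm)$: partitioning $U_\lm$ according to whether $g<0$, $0\le g\le1$, or $g>1$ and comparing $\Phi\circ g$ with the constants $0$ and $1$ and with $g$, respectively, via \Lem{en}, one gets $\mu_{\la\Phi\circ g\ra}(U_\lm)=\mu_{\la g\ra}(U_\lm\cap\{0\le g\le1\})\le\mu_{\la g\ra}(U_\lm)$. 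Now, for $u\in l(\partial U_\lm)$ let $g_u\in\cF$ be $H_m$ of the extension of $u$ by zero to $V_m$, so that $g_u=\sum_{p}u(p)\ph_p$ on $\overline{U_\lm}$ and $g_u=u$ on $\partial U_\lm$ by the first paragraph; then $\Phi\circ g_u\in\cF$ restricts to $\Phi\circ u$ on $\partial U_\lm$, hence is a competitor for the datum $\Phi\circ u$, so by the localization and the contraction estimate,
\[
\sum_{p,q}a_{pq}(\Phi\circ u)(p)(\Phi\circ u)(q)=\mu_{\la g_{\Phi\circ u}\ra}(U_\lm)\le\mu_{\la\Phi\circ g_u\ra}(U_\lm)\le\mu_{\la g_u\ra}(U_\lm)=\sum_{p,q}a_{pq}u(p)u(q).
\]
Feeding in, for fixed $p_0\ne q_0$ in $\partial U_\lm$, the datum $u:=\bfone_{\{q_0\}}-\eps\bfone_{\{p_0\}}$ with $\eps\in(0,1)$ — for which $\Phi\circ u=\bfone_{\{q_0\}}$ — this inequality becomes $a_{q_0q_0}\le a_{q_0q_0}-2\eps\,a_{p_0q_0}+\eps^2a_{p_0p_0}$; dividing by $\eps$ and letting $\eps\downarrow0$ gives $a_{p_0q_0}\le0$. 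The delicate point in all of this is the localized contraction estimate, which is where the diffuseness of energy measures (\Lem{en}) does the real work.
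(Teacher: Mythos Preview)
Your argument is correct and follows essentially the same route as the paper's proof. Both establish that $\cQ(\ph,\psi):=\mu_{\la H_m f,H_m g\ra}(U_\lm)$ (with $f|_{\partial U_\lm}=\ph$, $g|_{\partial U_\lm}=\psi$) is a Dirichlet form on $l(\partial U_\lm)$, the crux being the Markov property: your chain $\mu_{\la g_{\Phi\circ u}\ra}(U_\lm)\le\mu_{\la\Phi\circ g_u\ra}(U_\lm)\le\mu_{\la g_u\ra}(U_\lm)$ is exactly the paper's $\mu_{\la H_m\hat f\ra}(U_\lm)\le\mu_{\la\widehat{H_m f}\ra}(U_\lm)\le\mu_{\la H_m f\ra}(U_\lm)$, obtained by the same splicing construction via \Lem{zero} and \Lem{en}. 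The only difference is packaging: the paper then cites \cite[Proposition~2.1.3]{Ki} to pass from ``Dirichlet form on a finite set'' to the representation with $b_{pq}\ge0$, whereas you unpack that step by hand (zero row sums from $\sum_p\ph_p\equiv1$ on $\overline{U_\lm}$, then the test vector $u=\bfone_{\{q_0\}}-\eps\bfone_{\{p_0\}}$ to extract the off-diagonal sign). Your version is thus more self-contained, but there is no substantive divergence in strategy.
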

\begin{proof}
From Lemmas~\ref{lem:zero} and \ref{lem:en}, $H_mf=0$ on $U_\lm$ if $f=0$ on $\partial U_\lm$. Therefore, if $f,f'\in\cF$ satisfy $f=f'$ on $\partial U_\lm$, then $\mu_{\la H_mf\ra}(U_\lm)=\mu_{\la H_mf'\ra}(U_\lm)$.
Thus, for $\ph,\psi\in l(\partial U_\lm)$,
\[
  \cQ(\ph,\psi):=\mu_{\la H_m f,H_m g\ra}(U_\lm),
  \quad 
\]
where $f,g\in\cF$ satisfy $f|_{\partial U_\lm}=\ph$ and $g|_{\partial U_\lm}=\psi$, is well-defined.
From the proof of \cite[Proposition~2.1.3]{Ki}, the claim of the lemma follows if we prove that $\cQ$ is a Dirichlet form on the $L^2$ space on $\partial U_\lm$ with respect to the counting measure.
The bilinearity and the nonnegativity of $\cQ$ are evident.
We prove the Markov property.
Let $\ph\in l(\partial U_\lm)$ and take $f\in\cF$ such that $f|_{\partial U_\lm}=\ph$.
We define $\hat f=(0\vee f)\wg1$, $\widehat{H_m f}=(0\vee H_m f)\wg1$, and $h=\widehat{H_m f}\cdot \bfone_{U_\lm}+(H_m \hat f)\cdot \bfone_{K\setminus U_\lm}$.
Since $h=H_m\hat f+(\widehat{H_m f}-H_m \hat f)\cdot \bfone_{U_\lm}$, $h$ belongs to $\cF$ from \Lem{zero}.
Moreover, since $h=\hat f$ on $V_m$, we have
\[
0\le \cE(h)-\cE(H_m \hat f)
=\frac12\mu_{\la \widehat{H_m f}\ra}(U_\lm)
-\frac12\mu_{\la H_m \hat f\ra}(U_\lm)
\]
from \Lem{en}. Therefore,
$\mu_{\la H_m\hat f\ra}(U_\lm)\le \mu_{\la \widehat{H_m f}\ra}(U_\lm)\le \mu_{\la {H_m f}\ra}(U_\lm)$.
This indicates the Markov property of $\cQ$.
\end{proof}

Let $m\in \Z_+$ and $x,y\in V_m$.
We write $x\conn{m}y$ if there exists $\lm\in\Lm_m$ such that $x,y\in \partial U_\lm$.
We say that $\gm_m=\{x_0,x_1,\dots,x_M\}$ with $x_i\in V_m$ is an $m$-walk connecting $x$ and $y$ if $x_0=x$, $x_M=y$, and $x_i\conn{m}x_{i+1}$ for every $i=0,1,\dots,M-1$.
The length $\ell_\bfh^{(m)}(\gm_m)$ of $\gm_m$ based on $\bfh$ is defined as
\[
\ell_\bfh^{(m)}(\gm_m)=\sum_{i=1}^M |\bfh(x_i)-\bfh(x_{i-1})|_{\R^N}.
\]
For $n\ge m$ and a continuous curve $\gm\in C([0,1]\to K)$ connecting $x$ and $y$, we define an $n$-walk $\pi_n(\gm)=\{x_0,x_1,\dots,x_M\}$ by
$x_0=x$ and $x_i=\gm(t_i)$ with $t_i=\inf\{t>t_{i-1}\mid \gm(t)\in V_n\setminus \{x_{i-1}\}\}$, inductively. Here, we set $t_0=0$ by convention.
It is evident that $\ell_\bfh^{(n)}(\pi_n(\gm))$ is nondecreasing in $n$. From \Lem{lh},
\begin{equation*}
\ell_\bfh(\gm)=\lim_{n\to\infty}\ell_\bfh^{(n)}(\pi_n(\gm)).
\end{equation*}
For $n\ge m$, we define
\begin{align*}
\hat \rho_\bfh^{(n)}(x,y)&=\inf\{ \ell_\bfh^{(n)}(\gm_n)\mid \gm_n\text{ is an $n$-walk connecting $x$ and $y$}\}\\
\shortintertext{and}
\hat \rho_\bfh(x,y)&=\lim_{n\to\infty}\hat \rho_\bfh^{(n)}(x,y).
\end{align*}
We remark that $\hat \rho_\bfh(x,y)=\sup_{n\ge m}\hat \rho_\bfh^{(n)}(x,y)$ since $\hat \rho_\bfh^{(n)}(x,y) $ is nondecreasing in $n$.
\begin{prop}\label{prop:rhoh}
For $x,y\in V_*$, $\rho_\bfh(x,y)=\hat \rho_\bfh(x,y)$.
In other words,
\begin{equation}\label{eq:rhoh}
 \inf_{\gm} \sup_n \ell_\bfh^{(n)}(\pi_n(\gm))
 =\sup_n \inf_{\gm_n} \ell_\bfh^{(n)}(\gm_n),
\end{equation}
where $\gm$ is taken over all the continuous curves connecting $x$ and $y$, and $\gm_n$ is taken over all the $n$-walks connecting $x$ and $y$.
\end{prop}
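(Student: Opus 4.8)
I would prove the two inequalities $\hat\rho_\bfh\le\rho_\bfh$ and $\rho_\bfh\le\hat\rho_\bfh$ separately; the displayed minimax identity then follows by merely unwinding the definitions, since $\ell_\bfh(\gm)=\sup_n\ell_\bfh^{(n)}(\pi_n(\gm))$ and $\hat\rho_\bfh^{(n)}(x,y)=\inf_{\gm_n}\ell_\bfh^{(n)}(\gm_n)$. The inequality $\hat\rho_\bfh(x,y)\le\rho_\bfh(x,y)$ is the easy one: fixing $m$ with $x,y\in V_m$, for any $\gm\in C([0,1]\to K)$ joining $x$ and $y$ and any $n\ge m$ the walk $\pi_n(\gm)$ is an $n$-walk connecting $x$ and $y$, so $\hat\rho_\bfh^{(n)}(x,y)\le\ell_\bfh^{(n)}(\pi_n(\gm))\le\ell_\bfh(\gm)$; taking $\sup_n$ and then $\inf_\gm$ gives the claim.

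\textbf{The reverse inequality.} Here I may assume $L:=\hat\rho_\bfh(x,y)<\infty$ and fix $m_0$ with $x,y\in V_{m_0}$, so that $\hat\rho_\bfh^{(n)}(x,y)\le L$ for all $n\ge m_0$. The idea is to distill from near-optimal walks a single \emph{consistent tower} of walks, one at each level, and to realise its limit as a continuous curve. Two combinatorial facts are needed. First, deleting the segment of a walk between two coincident vertices yields again a walk between the same endpoints and does not increase the $\bfh$-length; hence one may restrict to \emph{simple} walks (no repeated vertex), and at level $n$ there are only finitely many simple walks connecting $x$ and $y$ (at most $\#V_n$ vertices). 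Second, between two consecutive vertices of a walk lying in $V_n$ the intervening vertices all lie in a single $U_\lm$, $\lm\in\Lm_n$, with the two endpoints in $\partial U_\lm$ (from \Lem{1} and the nesting of the cells); consequently, for a simple $N$-walk $\gm_N$ and $m_0\le n\le N$, the subsequence $\gm_N^{(n)}$ of $\gm_N$ formed by its vertices in $V_n$ is a simple $n$-walk connecting $x$ and $y$ with $\ell_\bfh^{(n)}(\gm_N^{(n)})\le\ell_\bfh^{(N)}(\gm_N)$ (triangle inequality), and $\gm_N^{(n)}$ is the $V_n$-subsequence of $\gm_N^{(n')}$ whenever $n\le n'\le N$.

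\textbf{Extraction and realisation.} I would pick, for each $N\ge m_0$, a simple $N$-walk $\gm_N$ joining $x$ and $y$ with $\ell_\bfh^{(N)}(\gm_N)\le L+2^{-N}$; since for each fixed $n$ only finitely many values of $\gm_N^{(n)}$ occur, a diagonal argument over $N$ produces, for every $n\ge m_0$, a simple $n$-walk $\sigma^{(n)}$ connecting $x$ and $y$ with $\ell_\bfh^{(n)}(\sigma^{(n)})\le L$ (as $L+2^{-N}\downarrow L$ along the chosen indices) and such that $\sigma^{(n)}$ is the $V_n$-subsequence of $\sigma^{(n+1)}$ for all $n$. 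Then I build $\gm\colon[0,1]\to K$ recursively: having assigned increasing times to the vertices of $\sigma^{(n)}$, subdivide each resulting interval into equal pieces according to the block of $\sigma^{(n+1)}$ refining the corresponding step of $\sigma^{(n)}$ (retaining the endpoints' times), and attach to each time interval a cell closure $\overline{U_\lm}$ of the right level, compatibly with nesting. Because two distinct points lie in a common cell of level $m$ for only finitely many $m$ (\Lem{diam}), every step eventually acquires an interior vertex, so every time interval is eventually subdivided by a factor $\le 1/2$; hence the partition meshes and the diameters of the nested cells tend to $0$, and the construction defines a unique continuous $\gm$ with $\gm(0)=x$, $\gm(1)=y$, $\gm(t_i^{(n)})=\sigma^{(n)}_i$. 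Using additivity of total variation over the partition $\{t_i^{(n)}\}$ together with the fact that for a continuous curve the $\bfh$-length equals the limit of its polygonal $\bfh$-lengths over partitions of vanishing mesh, one obtains
\[
\ell_\bfh(\gm)=\sum_{i=1}^{M_n}\ell_\bfh\bigl(\gm|_{[t_{i-1}^{(n)},t_i^{(n)}]}\bigr)=\lim_{m\to\infty}\ell_\bfh^{(m)}(\sigma^{(m)})\le L,
\]
so $\rho_\bfh(x,y)\le\ell_\bfh(\gm)\le L=\hat\rho_\bfh(x,y)$.

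\textbf{Main obstacle.} The delicate part is the realisation step: organising the recursion so that $\gm$ is genuinely well defined and continuous — which forces one to check that the partition meshes vanish, and hence that every step is subdivided infinitely often — and then justifying the identity $\ell_\bfh(\gm)=\lim_m\ell_\bfh^{(m)}(\sigma^{(m)})$ via the additivity and fine-partition characterisation of total variation. By contrast, the easy inequality and the combinatorial lemmas (loop removal, coarsening of walks, finiteness of simple walks, the diagonal extraction) are routine.
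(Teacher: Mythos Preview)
Your proposal is correct and follows essentially the same route as the paper's proof: the easy inequality is dispatched identically, and for the reverse you select (near-)optimal self-avoiding walks at each level, coarsen them via the $V_n$-subsequence operation (the paper's $\pi_{n,k}$), run a diagonal extraction to obtain a consistent tower $\{\sigma^{(n)}\}$, realise it as a continuous curve using \Lem{diam}, and read off $\ell_\bfh(\gm)=\lim_n\ell_\bfh^{(n)}(\sigma^{(n)})\le L$ from the vanishing mesh. The only cosmetic differences are that the paper takes exact minimisers $\hat\gm_n$ (available since there are finitely many self-avoiding $n$-walks) rather than $(L+2^{-N})$-near-minimisers, and is terser about the realisation step---it simply asserts the existence of $\gm$ and the nested partitions ``in view of \Lem{diam}'' where you spell out the recursive subdivision and the reason the mesh vanishes; your extra detail here is welcome and does not change the argument.
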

\begin{proof}
From the definition, the right-hand side of \Eq{rhoh} is dominated by the left-hand side.
We prove the converse inequality.

For $n\ge k\ge m$ and an $n$-walk $\gm_n=\{x_0,x_1,\dots, x_M\}$ with $x_0,x_M\in V_m$, let $\pi_{n,k}(\gm_n)$ denote the $k$-walk $\{x'_0,x'_1,\dots,x'_{M'}\}$ defined as $x'_0=x_0$ and $x'_i=x_{j(i)}$ with $j(i)=\inf\{j>j(i-1)\mid x_j\in V_k\setminus\{x'_{i-1}\}\}$ for $i=1,2,\dots$, inductively, where we set $j(0)=0$.

Let $x,y\in V_m$ for $m\in\Z_+$.
For each $n\ge m$, there exists a self-avoiding $n$-walk $\hat \gm_n$ that attains $\inf_{\gm_n} \ell_\bfh^{(n)}(\gm_n)$ on the right-hand side of \Eq{rhoh}, since there are only a finite number of self-avoiding $n$-walks.
For any divergent increasing sequence $\{n(k)\}$ and $n\ge m$, we can take a subsequence $\{n(k_j)\}$ such that $n(k_1)\ge n$ and $\{\pi_{n(k_j),n}(\hat \gm_{n(k_j)})\}_{j=1}^\infty$ are all the same.
By the diagonalization argument, we can take a divergent sequence $\{n(k)\}$ such that for every $k$ and $j$ with $k\ge j\ge m$, $\pi_{n(k),j}(\hat\gm_{n(k)})=\pi_{n(j),j}(\hat\gm_{n(j)})$.
Since $\{\pi_{n(j),j}(\hat\gm_{n(j)})\}_{j=m}^\infty$ is consistent in the sense that $\pi_{k,j}(\pi_{n(k),k}(\hat \gm_{n(k)}))=\pi_{n(j),j}(\hat\gm_{n(j)})$ for $k\ge j\ge m$, in view of \Lem{diam}, we can construct $\gm\in C([0,1]\to K)$ and a sequence of partitions $\{\Delta^{(j)}\colon 0=t^{(j)}_0<t^{(j)}_1<\dots<t^{(j)}_{N(j)}=1\}_{j=m}^\infty$ such that $\Delta^{(m)}\subset \Delta^{(m+1)}\subset\cdots$, $\lim_{j\to\infty}|\Delta^{(j)}|=0$, and $\pi_{n(j),j}(\hat\gm_{n(j)})=\{\gm(t^{(j)}_0),\gm(t^{(j)}_1),\dots,\gm(t^{(j)}_{N(j)})\}$ for all $j\ge m$. Then,
\begin{align*}
\sup_j \ell_\bfh^{(j)}(\pi_j(\gm))
&=\ell_\bfh(\gm)\\
&=\sup_j \ell_\bfh^{(j)}(\pi_{n(j),j}(\hat\gm_{n(j)}))\\
&\le \sup_j \ell_\bfh^{(n(j))}(\hat\gm_{n(j)})\\
&=\sup_j \inf_{\gm_{n(j)}} \ell_\bfh^{({n(j)})}(\gm_{n(j)})\\
&=\sup_n \inf_{\gm_n} \ell_\bfh^{(n)}(\gm_n),
\end{align*}
and equation~\Eq{rhoh} holds with $=$ replaced by $\le$.
\end{proof}
\begin{prop}\label{prop:geodesic}
Let $x,y\in K$.
There exists a shortest path $\gm$ connecting $x$ and $y$.
\end{prop}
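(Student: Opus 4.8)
The plan is to produce the shortest path by a compactness argument on curves, combined with the discrete approximation machinery already developed. If $\rho_\bfh(x,y)=\infty$ there is nothing to prove, so assume $\rho_\bfh(x,y)<\infty$. First I would reduce to the case $x,y\in V_*$: by density of $V_*$ (which follows from (A1) and \Lem{connectedness}) and continuity of $\bfh$, pick $x_k\to x$, $y_k\to y$ with $x_k,y_k\in V_*$; if each pair admits a shortest path, a diagonal/compactness extraction as below will yield one for $(x,y)$ as well, using that $\ell_\bfh$ is lower semicontinuous under the kind of convergence we control. (Alternatively, one can run the whole argument for general $x,y$ by approximating the \emph{curve} rather than the endpoints; I expect the cleanest route is to do the $V_*$ case carefully and then deal with general endpoints by a short limiting step.)

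For $x,y\in V_*$, say $x,y\in V_m$, the key is \Prop{rhoh}: for each $n\ge m$ there is a self-avoiding $n$-walk $\hat\gm_n$ attaining $\hat\rho_\bfh^{(n)}(x,y)$, and by the diagonalization argument in the proof of \Prop{rhoh} we may pass to a subsequence $\{n(k)\}$ along which the coarsenings $\pi_{n(k),j}(\hat\gm_{n(k)})$ stabilize consistently in $j$; invoking \Lem{diam} exactly as there, this produces a curve $\gm\in C([0,1]\to K)$ with a nested sequence of partitions $\Delta^{(j)}$, mesh tending to $0$, whose vertices trace out $\pi_{n(j),j}(\hat\gm_{n(j)})$. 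The computation displayed at the end of the proof of \Prop{rhoh} then gives
\[
\ell_\bfh(\gm)=\sup_j\ell_\bfh^{(j)}(\pi_{n(j),j}(\hat\gm_{n(j)}))\le \sup_j \inf_{\gm_{n(j)}}\ell_\bfh^{(n(j))}(\gm_{n(j)})=\hat\rho_\bfh(x,y)=\rho_\bfh(x,y),
\]
while $\ell_\bfh(\gm)\ge\rho_\bfh(x,y)$ holds by definition since $\gm$ connects $x$ and $y$. Hence $\gm$ is a shortest path. Note that finiteness of $\rho_\bfh(x,y)$ is what guarantees the $\hat\gm_{n}$ have uniformly bounded length, so the construction does not degenerate.

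For general $x,y\in K$ with $\rho_\bfh(x,y)<\infty$, I would take $x_k,y_k\in V_*$ with $x_k\to x$, $y_k\to y$ and shortest paths $\gm_k$ connecting them; one checks $\limsup_k \ell_\bfh(\gm_k)\le\rho_\bfh(x,y)$ by splicing short connecting curves (here I would use that $\rho_\bfh$ satisfies the triangle inequality together with the fact that, by (A1)(ii), points close in $d_K$ lie in a common $\overline{U_\lm}$ of small diameter, on which one can bound $\rho_\bfh$ — this last point needs the observation that $\rho_\bfh(x',x)\to0$ as $x'\to x$, which itself follows from arcwise connectedness and \Lem{diam} applied to harmonic interpolants, or more simply from \Thm{1} once $\sd_\bfh$ is shown to be continuous; to keep the proof self-contained I would instead argue directly that any $\overline{U_\lm}$ containing both points has finite $\rho_\bfh$-diameter bounded in terms of $\diam_{\R^N}\bfh(\overline{U_\lm})$ via a fixed connecting arc). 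Then re-reparametrizing the $\gm_k$ on $[0,1]$ and extracting, via the same stabilization-of-coarsenings argument, a limit curve $\gm$ from $x$ to $y$ with $\ell_\bfh(\gm)\le\liminf_k\ell_\bfh(\gm_k)\le\rho_\bfh(x,y)$ finishes it. The main obstacle is the compactness/lower-semicontinuity step: curves in $C([0,1]\to K)$ are \emph{not} equicontinuous in general, so one cannot simply invoke Arzel\`a--Ascoli on the $\gm_k$ directly; the remedy — and the technical heart of the argument — is to work with the combinatorial coarsenings $\pi_{n,j}$ and \Lem{diam}, exactly as in \Prop{rhoh}, so that the limit object is built vertex-by-vertex at every scale rather than as a uniform limit of parametrized curves, and then $\ell_\bfh$ of the limit is controlled because it is a supremum over scales of quantities that pass to the limit.
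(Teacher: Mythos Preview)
Your treatment of the case $x,y\in V_*$ is correct and matches the paper: the curve built in the proof of \Prop{rhoh} is already a shortest path, and the paper disposes of this case in one line.

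For general $x,y$, however, your reduction has a genuine gap. You need $\limsup_k\rho_\bfh(x_k,y_k)\le\rho_\bfh(x,y)$ for your chosen $x_k\to x$, $y_k\to y$ in $V_*$, and none of the routes you list delivers this. Invoking \Thm{1} is circular: its proof (Step~2) uses \Cor{rhoh}, whose proof begins by taking a shortest path, i.e.\ invokes the present proposition. The assertion that $\rho_\bfh(x',x)\to0$ as $x'\to x$ in $d_K$ is simply not available under (A1)--(A3); nothing so far excludes $\rho_\bfh$ being infinite between $d_K$-nearby points. And your preferred self-contained workaround --- bounding the $\rho_\bfh$-diameter of $\overline{U_\lm}$ by $\diam_{\R^N}\bfh(\overline{U_\lm})$ via ``a fixed connecting arc'' --- does not work either: a curve staying inside $\overline{U_\lm}$ has each increment $|\bfh(\gm(t_i))-\bfh(\gm(t_{i-1}))|_{\R^N}$ bounded by that diameter, but the \emph{sum} defining $\ell_\bfh$ is not.

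The paper avoids this continuity issue entirely by choosing the approximating points structurally rather than arbitrarily. With $U_n(x)=\bigcup_{\lm\in\Lm_n,\,x\in\overline{U_\lm}}\overline{U_\lm}$, every curve from $x$ to $y$ must cross $\partial U_n(x)\subset V_n$ and $\partial U_n(y)\subset V_n$, which gives $\rho_\bfh(x,y)\ge\rho_\bfh(x^{(n)},y^{(n)})$ for the minimizing boundary pair $(x^{(n)},y^{(n)})$ --- no continuity of $\rho_\bfh$ needed. One then takes shortest paths $\gm_n$ from $x^{(n)}$ to $y^{(n)}$ (available by the $V_*$ case), records for each $k\le n$ the first and last points $x^{(n,k)},y^{(n,k)}$ where $\gm_n$ meets $V_k$, extracts by diagonalization a subsequence along which these stabilize to $x_k,y_k$, and then \emph{concatenates} the overlapping segments $\gm_{n(l)}|_{[s_{n(l),l+m},s_{n(l),l+m-1}]}$, $\gm_{n(0)}|_{[s_{n(0),m},t_{n(0),m}]}$, $\gm_{n(l)}|_{[t_{n(l),l+m-1},t_{n(l),l+m}]}$ into a single curve $\gm$ from $x$ to $y$. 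Because each piece is a portion of a shortest path, the restriction $\gm|_{[s_n,t_n]}$ is itself a shortest path from $x_n$ to $y_n$, whence $\ell_\bfh(\gm)=\lim_n\rho_\bfh(x_n,y_n)\le\rho_\bfh(x,y)$. This splicing construction --- not a uniform-limit extraction of the $\gm_k$ as parametrized curves --- is what handles the moving endpoints; your phrase ``via the same stabilization-of-coarsenings argument'' glosses over precisely this step.
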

\begin{proof}
First, we note that $\gm$ in the proof of \Prop{rhoh} is a shortest path connecting $x$ and $y$. Therefore, the claim is true for $x,y\in V_*$.

We prove the claim for $x,y\in K$ with $x\ne y$.
For $m\in \Z_+$, we define
$U_m(x)=\bigcup_{\lm\in\Lm_m,\ x\in\overline{U_\lm}}\overline{U_\lm}$, and $U_m(y)$ in the same manner.
We note that $\partial U_m(x)\subset V_m$.
There exists $m\in\Z_+$ such that $U_n(x)\cap U_n(y)=\emptyset$ for all $n\ge m$.
For $n\ge m$, take $(x^{(n)},y^{(n)})\in \partial U_n(x)\times \partial U_n(y)$ such that $\rho_\bfh(x^{(n)},y^{(n)})=\min\{\rho_\bfh(x',y')\mid (x',y')\in \partial U_n(x)\times \partial U_n(y)\}$.
Since any continuous curve $\gm$ connecting $x$ and $y$ passes some points of $\partial U_n(x)$ and $\partial U_n(y)$, respectively, $\rho_\bfh(x^{(n)},y^{(n)})$ is nondecreasing in $n$ and $\ell_\bfh(\gm)\ge\rho_\bfh(x^{(n)},y^{(n)})$.
Therefore, 
\begin{equation}\label{eq:geodesic}
\rho_\bfh(x,y)\ge\rho_\bfh(x^{(n)},y^{(n)})
\quad\mbox{for } n\ge m.
\end{equation}
If $\rho_\bfh(x^{(n)},y^{(n)})=\infty$ for some $n$, the claim is trivially true.
We assume that $\rho_\bfh(x^{(n)},y^{(n)})<\infty$ for every $n$.
For each $n\ge m$, take a shortest path $\gm_n\in C([0,1]\to K)$ connecting $x^{(n)}$ and $y^{(n)}$.
For each $n$ and $k$ with $n\ge k\ge m$, we define $s_{n,k}=\inf\{t\in[0,1]\mid \gm_n(t)\in V_k\}$, $t_{n,k}=\sup\{t\in[0,1]\mid \gm_n(t)\in V_k\}$, $x^{(n,k)}=\gm_n(s_{n,k})$, and $y^{(n,k)}=\gm_n(t_{n,k})$.
Since $V_k$ is a finite set, by the diagonalization argument, we can take $\{x_k\}_{k=m}^\infty, \{y_k\}_{k=m}^\infty\subset K$, a monotone increasing sequence $\{n(l)\}_{l=0}^\infty$ of natural numbers such that $n(0)\ge m$, $x_k,y_k\in V_k$ for all $k$, and $x^{(n(l),k)}=x_{k}$ and $y^{(n(l),k)}=y_{k}$ for all $l$ and $k$ with $l\ge k-m\ge 0$.
Define $\gm\in C([0,1]\to K)$ by connecting and reparametrizing
  $\gm_{n(l)}|_{[s_{n(l),l+m},s_{n(l),l+m-1}]}$ $(l=\dots,3,2,1)$, $\gm_{n(0)}|_{[s_{n(0),m},t_{n(0),m}]}$, and $\gm_{n(l)}|_{[t_{n(l),l+m-1},t_{n(l),l+m}]}$ $(l=1,2,3,\dots)$.
Then, $\gm$ connects $x$ and $y$ and passes all $x_n$ and $y_n$ $(n\ge m)$.
By construction, $\ell_\bfh(\gm|_{[s_n,t_n]})=\rho_\bfh(x_n,y_n)$, where $s_n$ and $t_n$ are the times such that $\gm(s_n)=x_n$ and $\gm(t_n)=y_n$.
Then, we have 
\begin{equation}\label{eq:ellh}
 \ell_\bfh(\gm)= \lim_{n\to\infty} \ell_\bfh(\gm|_{[s_n,t_n]})=\lim_{n\to\infty}\rho_\bfh(x_n,y_n).
\end{equation}
Combining this equation with \Eq{geodesic}, we obtain $\ell_\bfh(\gm)\le\rho_\bfh(x,y)$.
Therefore, $\gm$ is a shortest path connecting $x$ and $y$.
\end{proof}
We remark that identity \Eq{ellh} is true even if $\rho_\bfh(x^{(n)},y^{(n)})=\infty$ for some $n$.
\begin{cor}\label{cor:rhoh}
Let $x$ and $y$ be distinct elements of $K$.
Then, there exist sequences $\{x_n\}_{n=m}^\infty$ and $\{y_n\}_{n=m}^\infty$ of $K$ for some $m$ such that 
\begin{equation}\label{eq:xy}
x_n,y_n\in V_n\mbox{ for all $n$, }
\lim_{n\to\infty}d_K(x_n,x)=\lim_{n\to\infty}d_K(y_n,y)=0
\end{equation}
and 
\[
\rho_\bfh(x,y)=\lim_{k\to\infty}\left(\lim_{n\to\infty}\rho_\bfh(x_k,y_n)\right).
\]
\end{cor}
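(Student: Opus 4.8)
The plan is to reuse the construction carried out in the proof of \Prop{geodesic}. Since $x\ne y$, that proof produces a shortest path $\gm\in C([0,1]\to K)$ with $\gm(0)=x$ and $\gm(1)=y$, together with points $x_n,y_n\in V_n$ and parameters $0\le s_n<t_n\le1$ with $\gm(s_n)=x_n$, $\gm(t_n)=y_n$, such that $\{s_n\}$ is nonincreasing with $s_n\to0$, $\{t_n\}$ is nondecreasing with $t_n\to1$, and $\ell_\bfh(\gm|_{[s_n,t_n]})=\rho_\bfh(x_n,y_n)$ for every $n$; in particular $\gm|_{[s_n,t_n]}$ is a shortest path connecting $x_n$ and $y_n$, and by the remark following \Prop{geodesic} this persists without any finiteness assumption on $\rho_\bfh(x^{(n)},y^{(n)})$. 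I take these $x_n,y_n$ as the required sequences: property \Eq{xy} is immediate, since $x_n=\gm(s_n)\to\gm(0)=x$ and $y_n=\gm(t_n)\to\gm(1)=y$ by continuity of $\gm$ (equivalently, from \Lem{diam}, as $x_n$ and $x$ both lie in $U_n(x)$ and $\diam\overline{U_n(x)}\to0$). For later use I also record that, inspecting the same proof, $\gm|_{[s_n,t_n]}$ is built from sub-paths of the minimizing curves $\gm_{n(l)}$, whence $\rho_\bfh(x_n,y_n)\le\rho_\bfh(x^{(n(l))},y^{(n(l))})$ for $l$ large, and therefore $\rho_\bfh(x,y)=\ell_\bfh(\gm)=\lim_{n\to\infty}\rho_\bfh(x_n,y_n)=\sup_n\rho_\bfh(x^{(n)},y^{(n)})$.

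Suppose first that $\rho_\bfh(x,y)<\infty$. For $k\le n$ one has $[s_k,t_n]\subset[s_n,t_n]$ because $\{s_j\}$ is nonincreasing, so $\gm|_{[s_k,t_n]}$ is a sub-path of the shortest path $\gm|_{[s_n,t_n]}$; since $\rho_\bfh(x_n,y_n)\le\ell_\bfh(\gm)<\infty$, the fact that a sub-path of a finite-length shortest path is again a shortest path (item (iii) of the first remark of Section~2) shows that $\gm|_{[s_k,t_n]}$ is a shortest path connecting $x_k$ and $y_n$, so $\rho_\bfh(x_k,y_n)=\ell_\bfh(\gm|_{[s_k,t_n]})$. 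Fix $k$ and let $n\to\infty$: since $t_n\uparrow1$, continuity of $\ell_\bfh$ under exhaustion of the parameter interval (item (ii) of that remark, applied with the left endpoint held at $s_k$) gives $\lim_{n\to\infty}\rho_\bfh(x_k,y_n)=\ell_\bfh(\gm|_{[s_k,1]})$. Now let $k\to\infty$: since $s_k\downarrow0$, the same property, applied with the right endpoint held at $1$, gives $\lim_{k\to\infty}\ell_\bfh(\gm|_{[s_k,1]})=\ell_\bfh(\gm|_{[0,1]})=\ell_\bfh(\gm)=\rho_\bfh(x,y)$. This is the asserted iterated-limit identity.

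It remains to treat the case $\rho_\bfh(x,y)=\infty$, where item (iii) above is not available. Here $\sup_n\rho_\bfh(x^{(n)},y^{(n)})=\infty$ by the last observation of the first paragraph, and since $\rho_\bfh(x^{(n)},y^{(n)})$ is nondecreasing (cf.\ \Eq{geodesic}) there is an $m$ with $\rho_\bfh(x^{(k)},y^{(k)})=\infty$ and $U_k(x)\cap U_k(y)=\emptyset$ for all $k\ge m$. Choose $x_k\in\partial U_k(x)$ and $y_n\in\partial U_n(y)$ arbitrarily for $k,n\ge m$; these boundary sets are nonempty subsets of $V_k$ and $V_n$ (by connectedness of $K$ and $\partial U_k(x)\subset V_k$), and $x_k\to x$, $y_n\to y$ by \Lem{diam}. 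For $n\ge k$, any continuous curve connecting $x_k$ and $y_n$ passes through the point $x_k\in\partial U_k(x)$ and, since it runs from the closed set $U_k(x)$ to the disjoint closed set $U_k(y)\supset U_n(y)\ni y_n$, also through some point of $\partial U_k(y)$; hence its $\bfh$-length is at least $\rho_\bfh(x^{(k)},y^{(k)})=\infty$. Thus $\rho_\bfh(x_k,y_n)=\infty$ for all $n\ge k\ge m$, so $\lim_{n\to\infty}\rho_\bfh(x_k,y_n)=\infty$ for each such $k$ and the iterated limit equals $\infty=\rho_\bfh(x,y)$. The only point that really needs care in the whole argument is the identity $\rho_\bfh(x_k,y_n)=\ell_\bfh(\gm|_{[s_k,t_n]})$ — that a geodesic restricted to the coarser endpoints $x_k,y_n$ remains a geodesic — and the consequent need to dispatch the infinite-distance case by the separate elementary argument above; apart from this, the corollary is a direct consequence of the construction in \Prop{geodesic} together with the continuity of $\ell_\bfh$ along curves.
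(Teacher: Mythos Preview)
Your argument is correct and follows the same route as the paper's proof: you take the shortest path $\gm$ and the points $x_n,y_n$ with parameters $s_n,t_n$ from the construction in \Prop{geodesic}, and deduce the iterated-limit identity from $\rho_\bfh(x_k,y_n)=\ell_\bfh(\gm|_{[s_k,t_n]})$ together with the continuity of $\ell_\bfh$ under exhaustion. The paper's proof is extremely terse---it simply writes
\[
\rho_\bfh(x,y)=\ell_\bfh(\gm)=\lim_{k\to\infty}\Bigl(\lim_{n\to\infty}\ell_\bfh(\gm|_{[s_k,t_n]})\Bigr)=\lim_{k\to\infty}\Bigl(\lim_{n\to\infty}\rho_\bfh(x_k,y_n)\Bigr)
\]
without further comment---so your explicit justification of $\rho_\bfh(x_k,y_n)=\ell_\bfh(\gm|_{[s_k,t_n]})$ via the sub-path property (item~(iii)) and your separate treatment of the case $\rho_\bfh(x,y)=\infty$ are elaborations rather than a different approach. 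The latter is a genuine addition: the construction in \Prop{geodesic} is carried out only under the hypothesis $\rho_\bfh(x^{(n)},y^{(n)})<\infty$, and the paper covers the infinite case only through the brief remark following that proposition, whereas you give a self-contained argument.
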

\begin{proof*}
Take a shortest path $\gm$ connecting $x$ and $y$, $\{x_n\},\{y_n\}\subset K$, and $\{s_n\}, \{t_n\}\subset [0,1]$ in the proof of \Prop{geodesic}.
Then, \Eq{xy} holds and
\[
\rho_\bfh(x,y)=\ell_\bfh(\gm)=\lim_{k\to\infty}\left(\lim_{n\to\infty}\ell_\bfh(\gm|_{[s_k,t_n]})\right)
=\lim_{k\to\infty}\left(\lim_{n\to\infty}\rho_\bfh(x_k,y_n)\right).\eqno{\qedsymbol}
\]
\end{proof*}
\begin{lem}\label{lem:dh}
For each $x\in K$, $\sd_\bfh(x,y)\in[0,+\infty]$ is continuous in $y\in K$.
\end{lem}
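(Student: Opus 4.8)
The lower semicontinuity of $y\mapsto\sd_\bfh(x,y)$ is immediate, since it is the pointwise supremum of the continuous functions $y\mapsto f(y)-f(x)$ as $f$ ranges over the admissible class $\mathcal A:=\{f\in\cF\mid\mu_{\la f\ra}\le\mu_{\la\bfh\ra}\}$. Thus the real content is upper semicontinuity, and the plan is to deduce it from the single claim that $\sd_\bfh(y,y')\to0$ as $y'\to y$, for every $y\in K$. Granting this, one argues as follows, using that $\sd_\bfh$ is symmetric (because $f\in\mathcal A\iff-f\in\mathcal A$) and satisfies the triangle inequality. If $\sd_\bfh(x,y)<\infty$, then for $y'$ near $y$ the two inequalities $\sd_\bfh(x,y')\le\sd_\bfh(x,y)+\sd_\bfh(y,y')$ and $\sd_\bfh(x,y)\le\sd_\bfh(x,y')+\sd_\bfh(y,y')$ give $|\sd_\bfh(x,y')-\sd_\bfh(x,y)|\le\sd_\bfh(y,y')\to0$; if $\sd_\bfh(x,y)=\infty$, then $\sd_\bfh(x,y)\le\sd_\bfh(x,y')+\sd_\bfh(y,y')$ with $\sd_\bfh(y,y')$ finite forces $\sd_\bfh(x,y')=\infty$ for $y'$ near $y$. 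In both cases $\sd_\bfh(x,\cdot)$ is continuous at $y$.

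It remains to prove the claim, which, since $f\in\mathcal A\iff-f\in\mathcal A$, is exactly the assertion that $\mathcal A$ is equicontinuous at each point $y$. I would establish this from the finite-ramification structure. By \Lem{en} the finite measure $\mu_{\la\bfh\ra}$ carries no point mass; by (A1)(ii) the neighborhoods $U_m(y)=\bigcup\{\overline{U_\lm}\mid\lm\in\Lm_m,\ y\in\overline{U_\lm}\}$ form a fundamental system at $y$, and since cells at different levels either are nested or are disjoint, $\{U_m(y)\}_m$ decreases to $\{y\}$; hence $\mu_{\la\bfh\ra}(U_m(y))\downarrow0$. For $y'$ close enough to $y$ the point $y'$ lies in $\overline{U_\lm}$ for some $\lm\in\Lm_m$ with $y\in\overline{U_\lm}$, so $|f(y')-f(y)|\le\max\{\mathrm{osc}_{\overline{U_\lm}}f\mid\lm\in\Lm_m,\ y\in\overline{U_\lm}\}$ for every $f$. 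Finally one bounds the oscillation of $f\in\mathcal A$ on a cell $\overline{U_\lm}$ by the energy measure of $f$ there: the trace form $\cQ$ on $\partial U_\lm$ furnished by \Lem{expression} is an irreducible Dirichlet form on the finite set $\partial U_\lm$ (here one uses that $U_\lm$ is connected, so that $\cQ(\varphi)=0$ forces $\varphi$ to be constant), whence $\mathrm{osc}_{\partial U_\lm}(f)^2\le C_\lm\,\mu_{\la H_mf\ra}(U_\lm)\le C_\lm\,\mu_{\la f\ra}(U_\lm)\le C_\lm\,\mu_{\la\bfh\ra}(U_\lm)$ by \Lem{min}; and a telescoping of the energy-minimizing approximations $H_nf\to f$ over the refinement levels $n\ge m$, together with the uniform bound $\cE(f)=\tfrac12\mu_{\la f\ra}(K)\le\tfrac12\mu_{\la\bfh\ra}(K)$ from \Eq{cE}, upgrades this to a bound on $\mathrm{osc}_{\overline{U_\lm}}(f)$ that tends to $0$ as $m\to\infty$ uniformly in $f\in\mathcal A$. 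This yields a modulus of continuity for $\mathcal A$ at $y$ and proves the claim.

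The main obstacle is this last step: turning the boundary estimate on $\partial U_\lm$ into an oscillation bound on the whole cell $\overline{U_\lm}$, uniformly over $f\in\mathcal A$ and small enough over the relevant cells as $m\to\infty$. This requires controlling the constants $C_\lm$ for all cells obtained by further refinement inside $U_\lm$ and summing the telescoping series of increments $H_{n+1}f-H_nf$; and because the continuous embedding $\cF\hookrightarrow C(K)$ coming from (A2) only yields $\|g\|_\infty\lesssim(\cE(g)+\|g\|_{L^2(\mu)}^2)^{1/2}$, one must also absorb the $L^2$-term, i.e.\ invoke a Poincar\'e-type inequality (equivalently, compactness of $\cF\hookrightarrow C(K)$), which in this setting follows from the cell structure together with \Lem{diam}. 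In the self-similar framework of \Thm{2} all of this is transparent, since the trace forms scale like $r_w^{-1}$ and hence $\mathrm{osc}_{\overline{U_w}}(f)^2\lesssim r_w\,\mu_{\la\bfh\ra}(K)\to0$ directly.
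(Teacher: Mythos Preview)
Your approach is genuinely different from the paper's, and the gap you yourself flag in the last paragraph is real. The paper's proof is a one-liner by comparison: fix $x$ and $M>0$, set $\cD=\{f\in\cF\mid f(x)=0,\ f\le M,\ \mu_{\la f\ra}\le\mu_{\la\bfh\ra}\}$, and observe (via \Lem{en}(ii)) that $\cD$ is closed under countable suprema, so it has a pointwise maximal element $g\in\cD\subset\cF\subset C(K)$. Since every admissible $f$ gives $(f-f(x))\wedge M\in\cD$, one gets $g=\sd_\bfh(x,\cdot)\wedge M$, and continuity follows by letting $M\uparrow\infty$. No cell estimates, no Poincar\'e constants, no telescoping.

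Your route tries to prove the stronger statement that $\mathcal A$ is equicontinuous by bounding $\mathrm{osc}_{\overline{U_\lm}}f$ in terms of $\mu_{\la\bfh\ra}(U_\lm)$. The step from the boundary oscillation (controlled by $\cQ$ on $\partial U_\lm$) to the full cell oscillation is not completed: under (A1)--(A3) alone you have no control over the constants $C_\lm$ as the cells refine, and the continuous embedding $\cF\hookrightarrow C(K)$ from (A2) gives only $\|g\|_\infty\lesssim(\cE(g)+\|g\|_{L^2(\mu)}^2)^{1/2}$, not a scale-invariant Poincar\'e inequality. Your claim that such an inequality ``follows from the cell structure together with \Lem{diam}'' is unsupported; \Lem{diam} controls diameters in $d_K$, not energy-to-oscillation constants. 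In the self-similar setting of Section~4 the scaling $r_w^{-1}$ rescues you (as you note), but the lemma is stated and used under (A1)--(A3), where your argument remains incomplete. The paper's maximal-element trick sidesteps all of this.
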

\begin{proof}
Let $x\in K$ and $M>0$.
There exists a maximal element of 
\[
\cD=\{f\in\cF\mid f(x)=0,\ f\le M,\mbox{ and }\mu_{\la f\ra}\le \mu_{\la \bfh\ra}\},
\]
in that there exists $g\in\cD$ such that $g\ge f$ $\mu$-a.e.\ for all $f\in\cD$.
Indeed, from \Lem{en}, it suffices to take $f_1,f_2,\dots$ from $\cD$ such that $\int_K f_k\,d\mu$ converges increasingly to $\sup\left\{\int_K f\,d\mu\;\vrule\;f\in\cD\right\}$ and define $g$ as $\sup_k f_k$.
Since $\cF\subset C(K)$, $g\ge f$ on $K$ for all $f\in\cD$.
By the definition of $\sd_\bfh$, $g$ is identical to $\sd_\bfh(x,\cdot)\wg M$.
This indicates the claim.
\end{proof}
Now, we prove \Thm{1}.
\begin{proof*}[Proof of \Thm{1}]
We divide the proof into two steps.

(Step 1) The case when $x,y\in V_m$ for some $m\in\Z_+$.
Take $n$ such that $n\ge m$.
We define $\ph_n(z):=\hat\rho_\bfh^{(n)}(x,z)$ for $z\in V_n$.
Then,
\begin{equation}\label{eq:lip}
|\ph_n(z)-\ph_n(z')|\le |\bfh(z)-\bfh(z')|_{\R^N}
\quad\text{for }z,z'\in V_n \text{ with }z\conn{n}z'. 
\end{equation}
Indeed, there exists an $n$-walk $\gm_n=\{x_0,x_1,\dots,x_M\}$ connecting $x$ and $z$ such that $\ph_n(z)=\ell_\bfh^{(n)}(\gm_n)$.
Since $\gm'_n:=\{x_0,x_1,\dots,x_M,z'\}$ is an $n$-walk connecting $x$ and $z'$, we have
\[
\ph_n(z')\le \ell_\bfh^{(n)}(\gm'_n)=\ph_n(z)+|\bfh(z)-\bfh(z')|_{\R^N}.
\]
By exchanging the roles of $z$ and $z'$, we obtain \Eq{lip}.

Let $\lm\in\Lm_n$ and take $\{b_{pq}\}_{p,q\in\partial U_\lm}$ in \Lem{expression}.
We denote $H_n \ph_n$ by $f_n$.
Then,
\begin{align*}
\mu_{\la f_n\ra}(U_\lm)
&=\frac12\sum_{p,q\in \partial U_\lm}b_{pq}(f_n(p)-f_n(q))^2\\
&\le\frac12\sum_{p,q\in \partial U_\lm}b_{pq}|\bfh(p)-\bfh(q)|_{\R^N}^2
\quad\text{(from \Eq{lip})}\\
&=\frac12\sum_{j=1}^N \sum_{p,q\in \partial U_\lm}b_{pq}(h_j(p)-h_j(q))^2\\
&=\sum_{j=1}^N \mu_{\la H_n h_j\ra}(U_\lm)\\
&\le \mu_{\la\bfh\ra}(U_\lm).\quad\text{(from \Lem{min})}
\end{align*}
In particular, $\mu_{\la f_n\ra}(U_\lm)\le \mu_{\la\bfh\ra}(U_\lm)$ for all $\lm\in\bigcup_{l=0}^n \Lm_l$.
Since $\sup_n \cE(f_n)\le \mu_{\la \bfh\ra}(K)/2$, the sequence $\{f_n\wg M\}_{n=m}^\infty$ is bounded in $\cF$ for any $M>0$.
There exists a subsequence $\{f_{n(k)}\wg M\}_{k=1}^\infty$ such that its Ces\`aro mean converges strongly in $\cF$. Denoting the limit by $f^M$, we have
\begin{align*}
\mu_{\la f^M\ra}(U_\lm)^{1/2}
&=\lim_{k\to\infty}\mu_{\left\la \frac1{k}\sum_{j=1}^k (f_{n(j)}\wg M)\right\ra}(U_\lm)^{1/2}\\
&\le \liminf_{k\to\infty}\frac1{k}\sum_{j=1}^k \mu_{\la f_{n(j)}\ra}(U_\lm)^{1/2}
\le \mu_{\la \bfh\ra}(U_\lm)^{1/2}
\end{align*}
for all $\lm\in\bigcup_{l=0}^\infty \Lm_l$.
Therefore, $\mu_{\la f^M\ra}\le \mu_{\la \bfh\ra}$ by the monotone class theorem.
Since the convergence in $\cF$ indicates uniform convergence from (A2),
$f^M(x)=0$ and $f^M(y)=\rho_\bfh(x,y)\wg M$ from \Prop{rhoh}.
Thus,
\[
  \sd_\bfh(x,y)\ge f^M(y)-f^M(x)=\rho_\bfh(x,y)\wg M.
\]
Since $M$ is arbitrary, we obtain $\sd_\bfh(x,y)\ge \rho_\bfh(x,y)$.

(Step 2) The case when $x,y\in K$. We may assume that $x\ne y$.
Take $\{x_n\},\{y_n\}\subset K$ in \Cor{rhoh}. Then, from \Lem{dh}, Step~1, and \Cor{rhoh},
\[
\sd_\bfh(x,y)=\lim_{k\to\infty}\left(\lim_{n\to\infty}\sd_\bfh(x_k,y_n)\right)\ge \lim_{k\to\infty}\left(\lim_{n\to\infty}\rho_\bfh(x_k,y_n)\right)=\rho_\bfh(x,y).\eqno{\qedsymbol}
\]
\end{proof*}
The following is a remark on the topologies of $K$ induced by $\rho_\bfh$ and $\sd_\bfh$.
\begin{cor}\label{cor:metric}
Suppose that $\rho_\bfh$ is a $[0,+\infty]$-valued metric on $K$.
Then, $\sd_\bfh$ is also a $[0,+\infty]$-valued metric.
Moreover, both $\rho_\bfh$ and $\sd_\bfh$ provide the same topologies on $K$ as the original one.
\end{cor}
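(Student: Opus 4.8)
The plan is to assemble the statement from \Thm{1} and \Lem{dh} by elementary point-set topology; no new analytic input is needed. Write $\tau_0$ for the topology of $d_K$ on $K$, and $\tau_\rho,\tau_\sd$ for the topologies generated by the balls of $\rho_\bfh$ and $\sd_\bfh$ (these topologies make sense since both functions are quasi-metrics; see Remark~2.1).

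First I would verify that $\sd_\bfh$ is a genuine $[0,+\infty]$-valued metric. Symmetry holds because the admissible class in \Eq{sd} is invariant under $f\mapsto-f$, and the triangle inequality because $f(z)-f(x)=(f(z)-f(y))+(f(y)-f(x))$; both are already recorded in Remark~2.1. The remaining axiom is that $\sd_\bfh(x,y)=0$ implies $x=y$: by \Thm{1} we have $\rho_\bfh(x,y)\le\sd_\bfh(x,y)$, so $\sd_\bfh(x,y)=0$ forces $\rho_\bfh(x,y)=0$, and since $\rho_\bfh$ is assumed to be a metric, $x=y$.

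Next I would compare the three topologies through two inclusions. From \Thm{1}, for every $x\in K$ and $\eps>0$ the ball $\{y\mid\sd_\bfh(x,y)<\eps\}$ is contained in $\{y\mid\rho_\bfh(x,y)<\eps\}$; since such balls form bases for $\tau_\sd$ and $\tau_\rho$ respectively, this yields $\tau_\rho\subseteq\tau_\sd$. From \Lem{dh}, for each fixed $x$ the function $\sd_\bfh(x,\cdot)\colon K\to[0,+\infty]$ is $d_K$-continuous, so each $\sd_\bfh$-ball $\{y\mid\sd_\bfh(x,y)<\eps\}$ is the preimage of the open set $[0,\eps)$ and hence lies in $\tau_0$; therefore $\tau_\sd\subseteq\tau_0$. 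Altogether $\tau_\rho\subseteq\tau_\sd\subseteq\tau_0$.

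Finally I would close the chain using compactness: $(K,\tau_0)$ is compact, $(K,\tau_\rho)$ is Hausdorff (the balls of the extended metric $\rho_\bfh$ separate points, using the triangle inequality to make small balls disjoint), and $\tau_\rho\subseteq\tau_0$, so the identity map $(K,\tau_0)\to(K,\tau_\rho)$ is a continuous bijection from a compact space onto a Hausdorff space, hence a homeomorphism; thus $\tau_0=\tau_\rho$. Combined with $\tau_\rho\subseteq\tau_\sd\subseteq\tau_0$, all three topologies coincide, which is the assertion. I do not expect any serious obstacle here; the only points that need a little care are the routine facts that an extended $[0,+\infty]$-valued metric still induces a Hausdorff topology and that a Hausdorff topology coarser than a compact one must coincide with it.
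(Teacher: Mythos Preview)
Your argument is correct and mirrors the paper's own proof: both use \Thm{1} to obtain that $\sd_\bfh$ separates points and that $\tau_\rho\subseteq\tau_\sd$, use \Lem{dh} for $\tau_\sd\subseteq\tau_0$, and then invoke the fact that a continuous bijection from a compact space to a Hausdorff space is a homeomorphism applied to the identity $(K,d_K)\to(K,\rho_\bfh)$. Your write-up is simply a more detailed version of the same reasoning.
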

\begin{proof}
From \Thm{1}, the first claim follows and the topology $\cO_\bfh$ associated with $\sd_\bfh$ is stronger than that with $\rho_\bfh$.
From \Lem{dh}, $\cO_\bfh$ is weaker than the original topology on $K$.
Since a continuous bijective map from a compact Hausdorff space to a Hausdorff space is homeomorphic,
by applying this fact to the identity map from $(K,d_K)$ to $(K,\rho_\bfh)$, the second assertion holds.
\end{proof}
\section{Proof of \Thm{2}}
Throughout this section, we assume (B1)--(B4).
Furthermore, we follow the notation used in Section~2.

For $w\in W_m$ with $m\in\Z_+$, $V_w$ denotes $K_w\cap V_m$.
For $w=w_1w_2\cdots w_m\in W_m$ and $w'=w'_1w'_2\cdots w'_{m'}\in W_{m'}$, $w_1w_2\cdots w_mw'_1w'_2\cdots w'_{m'}\in W_{m+m'}$ is represented as $ww'$.
For $i\in S$, $i^n\in W_n$ and $i^\infty\in \Sg$ denote $\underbrace{ii\cdots i}_n$ and $iii\cdots$, respectively.

The Dirichlet forms associated with regular harmonic structures have a property stronger than (A2): there exists $c>0$ such that 
\begin{equation}\label{eq:poincare}
\biggl(\sup_{y\in K}f(y)-\inf_{x\in K}f(x)\biggr)^2\le c\cE(f),
\quad f\in\cF.
\end{equation}
In particular, by using \Thm{1}, $\rho_\bfh(x,y)\le \sd_\bfh(x,y)\le \sqrt{c\mu_{\la\bfh\ra}(K)/2}<+\infty$ for any $x,y\in K$.

Let $p\in V_0$ and take $i\in S_0$ such that $\psi_i(p)=p$.
Recall that $v_i$ is an eigenvector of $A_i$ whose components are all nonnegative.
Let $u_i$ be the column vector $(D_{p'p})_{p'\in V_0}$.
Then, $u_i$ is an eigenvector of ${}^t\! A_i$ with respect to the eigenvalue $r_i$ (\cite[Lemma~5]{HN06}).
Since $K\setminus V_0$ is connected by (B2), (B3), and \cite[Proposition~1.6.8]{Ki}, from \cite[Theorem~3.2.11]{Ki},
\begin{equation}\label{eq:indep}
u_i(q)>0 \mbox{ for all }q\in V_0\setminus\{ p\}.
\end{equation}
We normalize $v_i$ so that $(u_i,v_i)_{l(V_0)}=1$.
The element of $l(V_0)$ taking constant~$1$ will be denoted by $\bfone$.
Let $\tilde l(V_0)=\{u\in l(V_0)\mid (u,\bfone)_{l(V_0)}=0\}$ and let $P\colon l(V_0)\to l(V_0)$ be the orthogonal projection onto $\tilde l(V_0)$.
We note that $u_i\in \tilde l(V_0)$ by $D\bfone=0$ and the definition of $u_i$.
\begin{lem}[{cf.~\cite[Lemma~6]{HN06}}]\label{lem:convergence}
Let $u\in l(V_0)$. Then, 
\[
\lim_{n\to\infty} r_i^{-n}P A_i^n u=(u_i,u)_{l(V_0)}P v_i.
\]
In particular, for $q_1,q_2\in V_0$,
\[
\lim_{n\to\infty} r_i^{-n}\left(A_i^n u(q_1)-A_i^n u(q_2)\right)=(u_i,u)_{l(V_0)}(v_i(q_1)-v_i(q_2)).
\]
Both convergences are uniform on the set $\{u\in l(V_0)\mid |P u|_{l(V_0)}\le 1\}$.
\end{lem}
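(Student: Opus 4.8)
The plan is to recognize the assertion as a purely linear-algebraic statement about the iterates of the single matrix $A_i$ on the finite-dimensional space $l(V_0)$, and to settle it by a Perron--Frobenius argument. First I would record the elementary structure: the constant function is harmonic, so $A_i\bfone=\bfone$, and since $\psi_i(p)=p$, evaluating harmonic functions at $p$ gives $(A_iu)(p)=u(p)$ for every $u\in l(V_0)$; hence (using $\#V_0=3$) the $2$-dimensional subspace $L:=\{u\in l(V_0)\mid u(p)=0\}$ is $A_i$-invariant, $l(V_0)=\R\bfone\oplus L$, and $v_i\in L$ (recall $v_i(p)=0$), while $u_i\in\tilde l(V_0)$ (as noted before the lemma). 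Writing $u=u(p)\bfone+w$ with $w\in L$ and using $A_i\bfone=\bfone$ and $P\bfone=0$, one has $PA_i^nu=PA_i^nw$, so it suffices to prove the first displayed limit for $u=w\in L$; the second limit then follows by pairing with $e_{q_1}-e_{q_2}\in\tilde l(V_0)$, since $A_i^nu(q_1)-A_i^nu(q_2)=(PA_i^nu,\,e_{q_1}-e_{q_2})_{l(V_0)}$, and the stated uniformity is automatic because $u\mapsto r_i^{-n}PA_i^nu$ depends only on $Pu$ and is a linear map on the finite-dimensional space $\tilde l(V_0)$.

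Next I would study the action of $A_i$ on $L$. Because $\psi_i^n(K)=K_{i^n}$ shrinks to $\{p_i\}=\{p\}$ as $n\to\infty$ while $\iota u$ is continuous, $A_i^nu\to u(p)\bfone$ for every $u$, so the spectral radius of $A_i|_L$ is strictly less than $1$; its two eigenvalues, which are real since $r_i\in\R$ and $\dim L=2$, are $r_i$ (with eigenvector $v_i$) and some $\mu$ with $|\mu|<1$. Everything hinges on the claim that $r_i$ is a simple eigenvalue and $|\mu|<r_i$. To get this I would note that in the basis $\{e_q\}_{q\in V_0\setminus\{p\}}$ the operator $A_i|_L$ is represented by a nonnegative matrix $B$ (by the maximum principle for harmonic functions; see \cite{Ki}), which carries the strictly positive eigenvector $(v_i(q))_{q\in V_0\setminus\{p\}}$ for the eigenvalue $r_i$; hence $r_i$ is the spectral radius of $B$ by Perron--Frobenius, and conditions (B2)--(B4) are precisely what one uses to verify that $B$ is primitive, which rules out the degenerate possibilities $B=r_iI$ and $\mu=-r_i$ (for which the asserted limit would fail) and forces $r_i$ to be simple with $|\mu|<r_i$. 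I expect this spectral-gap verification to be the main obstacle; everything else is formal.

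Finally, let $v'\in L$ be a $\mu$-eigenvector and write $w=\alpha v_i+\beta v'$ for $w\in L$. Then $r_i^{-n}A_i^nw=\alpha v_i+\beta(\mu/r_i)^nv'\to\alpha v_i$, so $r_i^{-n}PA_i^nw\to\alpha Pv_i$, and it remains to identify $\alpha$. From ${}^t\!A_iu_i=r_iu_i$, $A_iv'=\mu v'$, and $\mu\ne r_i$ one gets $(u_i,v')_{l(V_0)}=0$, while $(u_i,v_i)_{l(V_0)}=1$ by the chosen normalization; applying $(u_i,\cdot)_{l(V_0)}$ to $w=\alpha v_i+\beta v'$ then gives $\alpha=(u_i,w)_{l(V_0)}=(u_i,u)_{l(V_0)}$, and therefore $r_i^{-n}PA_i^nu\to(u_i,u)_{l(V_0)}Pv_i$, as claimed.
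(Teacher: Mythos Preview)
The paper does not supply a proof of this lemma; it simply cites \cite[Lemma~6]{HN06}. Your outline is the natural one and every step you actually carry out is correct: the reduction to the $A_i$-invariant plane $L=\{u\in l(V_0):u(p)=0\}$ via $A_i\bfone=\bfone$ and $(A_iu)(p)=u(p)$; the identification of $A_i|_L$ with a nonnegative $2\times2$ matrix $B$ carrying the strictly positive eigenvector $(v_i(q))_{q\ne p}$ for the eigenvalue $r_i$ (so that $r_i$ is the spectral radius of $B$); the remark that uniformity on $\{|Pu|_{l(V_0)}\le1\}$ is automatic because $PA_i^n u$ depends linearly on $Pu$ alone; and the endgame computation $r_i^{-n}A_i^n w\to\a v_i$ with $\a=(u_i,w)_{l(V_0)}$ once the strict gap $|\mu|<r_i$ is known.

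The only soft spot is exactly where you flag it: the strict spectral gap $|\mu|<r_i$. You propose to derive it from (B2)--(B4) via primitivity of $B$, but you do not carry this out, and the attribution is at least partly off. For instance, (B4) does not exclude the degenerate case $B=r_iI$: in that case $A_i$ has eigenvalues $1,r_i,r_i$, all nonzero, so $A_i$ is still invertible. Likewise it is not apparent how (B3), a sign condition on $Dv_i$, bears on the off-diagonal entries of $B$. The clean resolution is to quote the result rather than re-derive it: the strict inequality $|\mu|<r_i$ is precisely what \cite[Lemma~6]{HN06} provides (building on the Perron--Frobenius-type analysis of the matrices $A_i$ in \cite[Appendix~A]{Ki}) for regular harmonic structures on connected p.c.f.\ sets, independently of (B2)--(B4). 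With that input in hand your argument is complete and coincides with what the cited reference does.
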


We recall a property of energy measures as follows.
\begin{lem}[{cf.~\cite[Lemma~3.11]{Hi05}}]\label{lem:energy}
For $f\in \cF$ and $m\in\Z_+$, we have
  \[
    \mu_{\la f\ra}
    =\sum_{w\in W_m}\frac1{r_w} 
    (\psi_w)_* \mu_{\la \psi_w^* f\ra},
  \]
  that is, $\mu_{\la f\ra}(A)=\sum_{w\in W_m}\frac1{r_w} 
    \mu_{\la \psi_w^* f\ra}(\psi_w^{-1}(A))$ for any Borel subset $A$ of $K$.
\end{lem}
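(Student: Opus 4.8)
The plan is to verify the identity by integrating both sides against an arbitrary $\ph\in\cF\cap C(K)$, using the defining identity of the energy measure together with the iterated self-similarity of $(\cE,\cF)$. Since $K$ is compact and $\cF\subset C(K)$, every $f\in\cF$ is bounded, so $\int_K\ph\,d\mu_{\la f\ra}=2\cE(f,f\ph)-\cE(f^2,\ph)$ holds for every $f\in\cF$ and $\ph\in\cF\cap C(K)$. First I would record the $m$-step self-similarity: iterating $\cE(g,h)=\sum_{i\in S}r_i^{-1}\cE(\psi_i^*g,\psi_i^*h)$ (which already entails $\psi_i^*g\in\cF$ for $g\in\cF$) and using $\psi_w^*\circ\psi_i^*=\psi_{iw}^*$ and $r_{iw}=r_ir_w$, an induction on $m$ gives
\[
\cE(g,h)=\sum_{w\in W_m}\frac1{r_w}\cE(\psi_w^*g,\psi_w^*h),\qquad g,h\in\cF;
\]
moreover, since all summands are nonnegative, $\cE(\psi_w^*g)\le r_w\cE(g)$.

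Next, fix $f\in\cF$ and $\ph\in\cF\cap C(K)$. For each $w\in W_m$ we have $\psi_w^*f\in\cF$ and $\psi_w^*\ph\in\cF\cap C(K)$, so applying the energy-measure identity to $\psi_w^*f$ with test function $\psi_w^*\ph$ and using $(\psi_w^*f)(\psi_w^*\ph)=\psi_w^*(f\ph)$ and $(\psi_w^*f)^2=\psi_w^*(f^2)$ gives
\[
\int_K\psi_w^*\ph\,d\mu_{\la\psi_w^*f\ra}=2\cE\bigl(\psi_w^*f,\psi_w^*(f\ph)\bigr)-\cE\bigl(\psi_w^*(f^2),\psi_w^*\ph\bigr).
\]
Dividing by $r_w$, summing over $w\in W_m$, and invoking the $m$-step self-similarity for the pairs $(f,f\ph)$ and $(f^2,\ph)$, the right-hand side collapses to $2\cE(f,f\ph)-\cE(f^2,\ph)=\int_K\ph\,d\mu_{\la f\ra}$. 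Since $\int_K\psi_w^*\ph\,d\mu_{\la\psi_w^*f\ra}=\int_K\ph\,d\bigl((\psi_w)_*\mu_{\la\psi_w^*f\ra}\bigr)$ by the definition of the push-forward, this yields
\[
\int_K\ph\,d\mu_{\la f\ra}=\int_K\ph\,d\Bigl(\sum_{w\in W_m}\frac1{r_w}(\psi_w)_*\mu_{\la\psi_w^*f\ra}\Bigr)\qquad\text{for all }\ph\in\cF\cap C(K).
\]

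It then remains to pass from this identity of integrals to equality of measures. Both $\mu_{\la f\ra}$ and $\sum_{w\in W_m}r_w^{-1}(\psi_w)_*\mu_{\la\psi_w^*f\ra}$ are finite Borel measures on the compact metric space $K$, and $\cF\cap C(K)$ is dense in $C(K)$ in the uniform norm by the regularity of $(\cE,\cF)$; hence the two measures have the same integral against every element of $C(K)$ and therefore coincide, which is the asserted identity (its Borel-set form following by evaluation at $\psi_w^{-1}(A)$). I do not expect a real obstacle here — the argument is bookkeeping with the defining properties of energy measures and the self-similarity — the one place needing a little care being this last step, namely that a finite Borel measure on a compact metric space is determined by its pairing with $C(K)$; alternatively, one could bypass the remark that $\cF$-functions are bounded by truncating $f$ to $f_n=((-n)\vee f)\wg n$, using $\mu_{\la f_n-f\ra}(K)=2\cE(f_n-f)\to0$ and $|\mu_{\la g\ra}(A)^{1/2}-\mu_{\la h\ra}(A)^{1/2}|\le\mu_{\la g-h\ra}(A)^{1/2}$ to pass to the limit in the finite sum over $W_m$.
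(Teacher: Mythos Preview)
Your argument is correct. The paper does not supply its own proof of this lemma; it merely cites \cite[Lemma~3.11]{Hi05} as the source. Your approach---testing against $\ph\in\cF\cap C(K)$ via the defining identity of $\mu_{\la f\ra}$, expanding both $2\cE(f,f\ph)$ and $\cE(f^2,\ph)$ by the iterated self-similarity, and then invoking the regularity of $(\cE,\cF)$ to identify the two finite Borel measures---is precisely the standard one and is essentially how the cited lemma is proved in \cite{Hi05}. The only minor remark is that the induction step for the $m$-fold self-similarity is more naturally written using $\psi_{wi}=\psi_w\circ\psi_i$ (so $\psi_{wi}^*=\psi_i^*\circ\psi_w^*$ and $r_{wi}=r_wr_i$), but since the indexing set $W_m=S^m$ is traversed either way this is purely notational and does not affect the argument.
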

The following is a rough upper-side estimate of $\sd_\bfh$ by $\rho_\bfh$. 
\begin{lem}\label{lem:2.1}
Let $m\in\Z_+$, $w\in W_m$, and $x,y\in V_w$ with $x\ne y$.
Let $\gm\in C([0,1]\to K)$ be a shortest path connecting $x$ and $y$, and suppose that the image of $\gm$ is contained in $K_w$.
For each $n\in\N$, we define $z_n\in V_{m+n}$ by
\[
z_n=\gm(s_n)\mbox{ with }
s_n=\inf\{t\in(0,1]\mid \gm(t)\in V_{m+n}\setminus\{x\}\}.
\]
Then, there exists $c_0(n)>0$ for each $n\in\N$ that is independent of $m,w,x,y,\gm$ such that $\rho_\bfh(x,z_n)\ge c_0(n)\sd_\bfh(x,y)$.
\end{lem}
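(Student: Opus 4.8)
\emph{Plan.} The idea is to reduce, by self-similarity, to an estimate between two vertices of $V_0$ inside a single cell, and then close the argument by compactness and a nondegeneracy property of harmonic functions.

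\textbf{Step 1 (reduction to $V_0$).} Put $g:=\psi_w^*\bfh$, again a tuple of harmonic functions, and $p:=\psi_w^{-1}(x)$, $\tilde y:=\psi_w^{-1}(y)\in V_0$. By \Lem{energy} together with the absence of atoms in energy measures (\Lem{en}(i)), every $f\in\cF\cap C(K)$ with $\mu_{\la f\ra}\le\mu_{\la\bfh\ra}$ satisfies $\mu_{\la\psi_w^* f\ra}\le\mu_{\la g\ra}$ (the pairwise intersections of distinct generation-$m$ cells are finite, hence negligible), so $\psi_w^* f$ is admissible for $\sd_g$; taking the supremum gives $\sd_\bfh(x,y)\le\sd_g(p,\tilde y)$. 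On the geodesic side, by property (iii) of the Remark, the piece of $\gamma$ after the last time it visits $x$ before $s_n$ is a shortest path connecting $x$ and $z_n$, and a connectedness argument based on \Lem{1} shows that its image lies in a single generation-$(m+n)$ cell $K_{v_0}$ containing $x$, with $v_0=wu$ for some $u\in W_n$; since a shorter path inside $K_{v_0}$ would also be a path in $K$, this piece is shortest inside $K_{v_0}$ as well, so $\rho_\bfh(x,z_n)=\rho_{\psi_{v_0}^*\bfh}(\psi_{v_0}^{-1}x,\psi_{v_0}^{-1}z_n)$, with both arguments in $V_0$ and distinct. Finally, (B3) gives $p=p_i$ for a unique $i\in S_0$, and under (B1)--(B4) the vertex $p_i$ lies in only one generation-$n$ cell, so $u=i^n$, $\psi_{v_0}^{-1}x=p_i$, and $\psi_{v_0}^*\bfh=\psi_{i^n}^* g$. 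Writing $\xi:=\psi_{v_0}^{-1}z_n\in V_0\setminus\{p_i\}$, it remains to bound $\rho_{\psi_{i^n}^* g}(p_i,\xi)$ from below by $c_0(n)\,\sd_g(p_i,\tilde y)$ uniformly in $g$, $i$, $\xi$, $\tilde y$.

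\textbf{Step 2 (a priori bound on $\sd_g$).} By \Eq{poincare} and \Eq{cE}, any admissible $f$ satisfies $f(\tilde y)-f(p_i)\le\sup_K f-\inf_K f\le\sqrt{c\,\cE(f)}\le\sqrt{(c/2)\,\mu_{\la g\ra}(K)}$, hence $\sd_g(p_i,\tilde y)\le\sqrt{(c/2)\,\mu_{\la g\ra}(K)}$. So it suffices to produce $c_3(n)>0$, independent of $g,i,\xi$, with $\rho_{\psi_{i^n}^* g}(p_i,\xi)\ge c_3(n)\sqrt{\mu_{\la g\ra}(K)}$; then $c_0(n):=c_3(n)\sqrt{2/c}$ works.

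\textbf{Step 3 (compactness and nondegeneracy).} Both sides of the desired inequality are positively homogeneous of degree one in $g$ and depend on $g$ only through $Pg|_{V_0}\in(\tilde l(V_0))^N$, and both are continuous in this parameter: the right side obviously, and $\rho_{\psi_{i^n}^* g}(p_i,\xi)$ because, for a tuple of harmonic functions $G$, the quantity $\rho_G(p_i,\xi)$ is simultaneously an infimum of the continuous-in-$G$ functionals $G\mapsto\ell_G(\gamma)$ and, by \Prop{rhoh}, the supremum of the continuous-in-$G$ functionals $G\mapsto\hat\rho_G^{(k)}(p_i,\xi)$. Since the set $\{g:\mu_{\la g\ra}(K)=1\}$ is compact after quotienting by additive constants and there are finitely many pairs $(i,\xi)$, it is enough to rule out $\rho_{\psi_{i^n}^* g}(p_i,\xi)=0$ for non-constant $g$. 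Here (B4) enters: $A_i$ being invertible, $\psi_{i^n}^* g$ is non-constant, hence some component $G_1$ of it is a non-constant harmonic function with $\rho_{\psi_{i^n}^* g}(p_i,\xi)\ge\rho_{G_1}(p_i,\xi)$; and a non-constant harmonic function cannot be constant along a continuum joining the two distinct $V_0$-vertices $p_i,\xi$ — were it constant $\equiv a$ along such a continuum, then since $V_*$ is totally disconnected and the function is non-constant its value differs from $a$ at the third vertex, and (using the generation-one values and the strong maximum principle, valid because $K\setminus V_0$ is connected by (B2)) it differs from $a$ at every generation-one vertex that separates $p_i$ from $\xi$, contradicting the connectedness of the continuum. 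This yields $\rho_{\psi_{i^n}^* g}(p_i,\xi)>0$, hence $c_3(n)>0$ by compactness.

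\textbf{Main obstacle.} The heart of the matter is the nondegeneracy claim in Step 3 — that a non-constant harmonic function does not collapse any curve joining two distinct boundary vertices — which is exactly the point where (B2)--(B4) are genuinely used; and, in Step 1, the bookkeeping that the first generation-$(m+n)$ vertex is reached inside the cell $K_{wi^n}$. (One could instead avoid the full compactness argument by a separate asymptotic treatment for large $n$ via \Lem{convergence}, since $\psi_{i^n}^* g$ is then essentially proportional to the nondegenerate harmonic function with boundary values $v_i$, combined with a finite check for the remaining small $n$.)
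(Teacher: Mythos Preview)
Your approach is essentially correct but genuinely different from the paper's, and the two routes are worth comparing.

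The paper never introduces the geodesic quantity $\rho_{\psi_{i^n}^*g}(p_i,\xi)$ at all. Instead it works purely with finite-dimensional linear algebra: using \Lem{convergence} it shows that for a suitably large $M$ the map
\[
\tilde l(V_0)\ni u\longmapsto \bigl((u,\alpha)_{l(V_0)}^2+|A_i^M u(\hat q)-A_i^M u(p)|^2\bigr)^{1/2}
\]
is a norm (here $\alpha$ encodes the difference $u(p)-u(q)$), and then invokes equivalence of norms on the two-dimensional space $\tilde l(V_0)$ together with the norms $u\mapsto\cE(\iota u)^{1/2}$ and $u\mapsto\cE(\psi_{i^n}^*\iota u)^{1/2}$ (the latter is a norm by~(B4)). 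This yields the explicit chain
\[
(f(y)-f(x))^2\le c\cE(\psi_w^*f)\le c\sum_j\cE(\psi_w^*h_j)\le cc_1c_2\sum_j\bigl((h_j(z_n)-h_j(x))^2+(h_j(z_{n+M})-h_j(x))^2\bigr),
\]
and the right side is dominated by $2cc_1c_2\,\rho_\bfh(x,z_n)^2$ since $z_{n+M}$ lies on the geodesic between $x$ and $z_n$. Thus the paper bounds $\sd_\bfh$ by values of $\bfh$ at \emph{two} specific vertices $z_n,z_{n+M}$, the second one supplied precisely by \Lem{convergence}.

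Your route replaces this explicit construction by a soft compactness argument on the sphere in $(\tilde l(V_0))^N$, together with the positivity statement $\rho_G(p_i,\xi)>0$ for non-constant harmonic $G$. That positivity is correct---it follows from the strong minimum principle (a non-constant harmonic function attains its minimum only on $V_0$), which in turn comes from irreducibility of the graphs $(V_n,\cE^{(n)})$, a consequence of (D2) and connectedness of $K$ rather than (B2) directly; your phrasing there is a bit loose but the idea is sound. One small correction: you claim continuity of $G\mapsto\rho_G(p_i,\xi)$, but the infimum representation only gives upper semicontinuity for each fixed rectifiable $\gamma$, and an infimum of such need not be upper semicontinuous. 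Fortunately, the supremum representation via \Prop{rhoh} gives lower semicontinuity, and that alone suffices to conclude that the infimum over the compact sphere is attained and positive.

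What each approach buys: the paper's argument is constructive, keeps the constants traceable, and isolates exactly which structural inputs ((B1) for $\dim\tilde l(V_0)=2$, (B4) for the norm, \Lem{convergence} for the second vertex) are doing the work. Your argument is more conceptual, avoids \Lem{convergence} entirely in the main line, and would transfer more readily to situations with $\#V_0>3$---at the price of non-explicit constants and a reliance on the strong maximum principle. Both proofs share the implicit step (your ``$u=i^n$'', the paper's ``$z_n=\pi(wi^nk^\infty)$'') that the first exit from $x$ through $V_{m+n}$ occurs within $K_{wi^n}$, which rests on $p_i$ having the unique address $i^\infty$.
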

\begin{proof}
Let $p\in V_0$ and take $i\in S_0$ such that $\psi_i(p)=p$.
Let $q$ and $q'$ denote the distinct elements of $V_0\setminus\{p\}$, that is, $V_0=\{p,q,q'\}$.
Define $\a\in \tilde l(V_0)$ by $\a(p)=1$, $\a(q)=-1$, and $\a(q')=0$.
From \Eq{indep}, $\a$ and $u_i$ are linearly independent in $l(V_0)$; thus, the linear span of $\a$ and $u_i$ is $\tilde l(V_0)$ since $\dim \tilde l(V_0)=2$ from (B1).
Therefore, there exists $\dl>0$ such that any $u\in l(V_0)$ with $|P u|_{l(V_0)}=1$ satisfies $|(u,\a)_{l(V_0)}|\ge\dl$ or $|(u,u_i)_{l(V_0)}|\ge\dl$.
Let $\hat q$ denote $q$ or $q'$.
From \Lem{convergence}, 
\begin{align*}
A_i^n u(\hat q)-A_i^n u(p)
=r_i^{n}(u_i,u)_{l(V_0)}v_i(\hat q)+o(r_i^{n})
\quad \mbox{as }n\to\infty
\end{align*}
uniformly on $\{u\in l(V_0)\mid |P u|_{l(V_0)}= 1\}$.
Therefore, for sufficiently large $M\in \N$, 
\[
|A_i^M u(\hat q)-A_i^M u(p)|_{l(V_0)}\ge \frac{r_i^M\dl}2 v_i(\hat q)\,(>0)
\]
for any $u\in l(V_0)$ such that $|P u|_{l(V_0)}= 1\mbox{ and }|(u,u_i)_{l(V_0)}|\ge\dl$.

From this argument, the map
\[
\tilde l(V_0)\ni u\mapsto \left((u,\a)_{l(V_0)}^2+|A_i^M u(\hat q)-A_i^M u(p)|_{l(V_0)}^2\right)^{1/2}\in\R
\]
defines a norm on $\tilde l(V_0)$;
so do the maps $u\mapsto\cE(\iota(u))^{1/2}$ and $u\mapsto\cE(\psi_{i^n}^*(\iota(u)))^{1/2}$ for $n\in\N$ because of (B4).
Then, there exist $c_1>0$ and $c_2>0$ such that for every $h\in\cH$, 
\begin{align*}
\cE(h)&\le c_1\cE(\psi_{i^n}^*(h))\\
\shortintertext{and}
\cE(h)&\le c_2\left((h(q)-h(p))^2+((\psi_{i^M}^* h)(\hat q)-(\psi_{i^M}^* h)(p))^2\right)
\end{align*}
for all $q, \hat q\in V_0\setminus\{p\}$.
Since there are only finitely many choices of $p$, $q$, and $\hat q$, we can take $c_1$ and $c_2$ as constants independent of $p$, $q$, and $\hat q$.
(Note that $c_1$ depends on $n$.)

Now, in the setting of the claim, let $f\in\cF$ satisfy $\mu_{\la f\ra}\le\mu_{\la \bfh\ra}$.
From \Lem{energy}, $\mu_{\la \psi_w^*f\ra}\le\mu_{\la \psi_w^*\bfh\ra}$.
In particular, $\cE(\psi_w^*f)\le \sum_{j=1}^N \cE(\psi_w^* h_j)$.
Let $p=\psi_w^{-1}(x)\in V_0$ and take $i\in S_0$ such that $\psi_i(p)=p$.
Take $k,l\in S_0\setminus\{i\}$ such that $z_n=\pi(w i^n k^\infty)$ and $z_{n+M}=\pi(w i^{n+M} l^\infty)$, and set $q=\pi(k^\infty)$, $\hat q=\pi(l^\infty)$.
Then,
\begin{align*}
(f(y)-f(x))^2&=((\psi_w^* f)(\psi_w^{-1}(y))-(\psi_w^* f)(p))^2\\
&\le c\cE(\psi_w^* f)\quad\mbox{(from \Eq{poincare})}\\
&\le c \sum_{j=1}^N \cE(\psi_w^* h_j)\\
&\le cc_1 \sum_{j=1}^N \cE(\psi_{w i^n}^* h_j)\\
&\le cc_1c_2\sum_{j=1}^N \Bigl\{((\psi_{w i^n}^* h_j)(q)-(\psi_{w i^n}^* h_j)(p))^2\\*
&\hspace{5em}+((\psi_{i^M}^*\psi_{w i^n}^* h_j)(\hat q)-(\psi_{i^M}^*\psi_{w i^n}^* h_j)(p))^2\Bigr\}\\
&= cc_1c_2\sum_{j=1}^N \left\{(h_j(z_n)-h_j(x))^2+(h_j(z_{n+M})-h_j(x))^2\right\}\\
&\le 2cc_1c_2\rho_\bfh(x,z_n)^2.
\end{align*}
Thus, $\sd_\bfh(x,y)\le(2cc_1c_2)^{1/2}\rho_\bfh(x,z_n)$.
\end{proof}
\begin{cor}\label{cor:2.2}
Following the same notation as that in \Lem{2.1}, we have
\begin{enumerate}
\item $\rho_\bfh(x,y)\ge c_0(n)\sd_\bfh(x,y)$;
\item $\rho_\bfh(x,z_n)\ge c_0(n)\rho_\bfh(x,y)$.
\end{enumerate}
\end{cor}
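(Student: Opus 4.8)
The plan is to deduce both inequalities directly from \Lem{2.1}, \Thm{1}, and the elementary properties of shortest paths recorded in the Remark that follows the definition of $\rho_\bfh$. The point is that \Cor{2.2} is essentially a bookkeeping consequence of \Lem{2.1}, so no new analytic input is needed.

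For (i), the key observation is that $z_n$ lies on the shortest path $\gm$, since $z_n=\gm(s_n)$ with $s_n\in(0,1]$. Using the concatenation identity $\ell_\bfh(\gm)=\ell_\bfh(\gm|_{[0,s_n]})+\ell_\bfh(\gm|_{[s_n,1]})$ and $\ell_\bfh(\gm|_{[s_n,1]})\ge0$, I would obtain $\rho_\bfh(x,z_n)\le\ell_\bfh(\gm|_{[0,s_n]})\le\ell_\bfh(\gm)=\rho_\bfh(x,y)$ (equivalently, $\gm|_{[0,s_n]}$ is itself a shortest path connecting $x$ and $z_n$ by item~(iii) of the Remark). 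Combining this with the conclusion $\rho_\bfh(x,z_n)\ge c_0(n)\sd_\bfh(x,y)$ of \Lem{2.1} gives $\rho_\bfh(x,y)\ge\rho_\bfh(x,z_n)\ge c_0(n)\sd_\bfh(x,y)$, which is (i).

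For (ii), I would invoke \Thm{1}, which applies because the Dirichlet form attached to a regular harmonic structure satisfies (A1)--(A3), as noted in Section~2; it yields $\sd_\bfh(x,y)\ge\rho_\bfh(x,y)$. Feeding this into \Lem{2.1} gives $\rho_\bfh(x,z_n)\ge c_0(n)\sd_\bfh(x,y)\ge c_0(n)\rho_\bfh(x,y)$, which is (ii).

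There is no real obstacle: the only point requiring a moment's care is confirming that $z_n$ genuinely sits on a shortest path from $x$ to $y$, so that $\rho_\bfh(x,z_n)\le\rho_\bfh(x,y)$ in the step above. This is exactly guaranteed by the hypothesis of \Lem{2.1} that $\gm$ is a shortest path with image contained in $K_w$, together with the additivity of $\ell_\bfh$ under subdivision of the parameter interval.
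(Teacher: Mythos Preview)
Your proposal is correct and follows essentially the same approach as the paper: for (i) you use $\rho_\bfh(x,y)\ge\rho_\bfh(x,z_n)$ together with \Lem{2.1}, and for (ii) you combine \Lem{2.1} with \Thm{1}, exactly as the paper does.
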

\begin{proof}
From \Lem{2.1}, (i) is evident since $\rho_\bfh(x,y)\ge\rho_\bfh(x,z_n)$.
(ii) follows from \Lem{2.1} and \Thm{1}.
\end{proof}
The following technical lemma is used in the proof of \Lem{2.3}.
\begin{lem}\label{lem:neg}
Let $p\in V_0$ and $i\in S_0$ satisfy $\psi_i(p)=p$.
Let $\a_1,\dots,\a_N\in l(V_0)$ and $q\in V_0\setminus\{p\}$.
For $n\in\Z_+$, let $\gm^{(n)}_j=\left(A_i^n \a_j(q)-A_i^n \a_j(p)\right)/v_i(q)$ for $j=1,\dots,N$ and $\gm^{(n)}=\left(\sum_{j=1}^N (\gm^{(n)}_j)^2\right)^{1/2}$.
We write $\ph^{(n)}_j=\iota(A_i^n\a_j)-\gm^{(n)}_j\iota(v_i)\in\cH$.
Then, given $\dl>0$ and $\eps>0$, there exists $n_0\in\N$ that is independent of $\a_1,\cdots,\a_N, p,q$ such that for all $n\ge n_0$,
\begin{equation}\label{eq:neg}
  \cE(\ph^{(n)}_j)\le \eps (\gm^{(n)})^2 \cE(\iota(v_i)),
  \quad j=1,\dots,N,
\end{equation}
as long as $|(u_i,\a_l)_{l(V_0)}|\ge \dl \left(\sum_{j=1}^N |P\a_j|_{l(V_0)}^2\right)^{1/2}$ for some $l\in\{1,\dots,N\}$.
\end{lem}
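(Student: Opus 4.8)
The plan is to push the asymptotics of the iterates $A_i^n$ supplied by \Lem{convergence} through the two sides of \Eq{neg}, being careful about uniformity. Both sides of \Eq{neg} are homogeneous of degree $2$ in $(\a_1,\dots,\a_N)$: the maps $\a_j\mapsto\gm_j^{(n)}$ and $\a_j\mapsto\ph_j^{(n)}$ are linear, $\gm^{(n)}$ scales accordingly, and the hypothesis is invariant under a common positive rescaling of the $\a_j$'s. Moreover \Eq{neg} is trivially true when $P\a_1=\dots=P\a_N=0$, since then $A_i^n\a_j=\a_j$ (using $A_i\bfone=\bfone$), so $\gm_j^{(n)}=0$ and $\ph_j^{(n)}$ is a constant function with zero energy. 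Hence I may assume $\sum_{j=1}^N|P\a_j|_{l(V_0)}^2=1$, in which case the hypothesis reads $|(u_i,\a_l)_{l(V_0)}|\ge\dl$ for some $l$. Since only finitely many triples $(p,i,q)$ occur in the statement, it suffices to produce, for each such triple, a threshold $n_0$ that works uniformly over all normalized tuples $(\a_1,\dots,\a_N)$, and then take the maximum.

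First I would rewrite the left-hand side. Because the constant part of any $w\in l(V_0)$ contributes nothing to $\cE(\iota(w))$, we have $\cE(\ph_j^{(n)})=\cE(\iota(PA_i^n\a_j-\gm_j^{(n)}Pv_i))$; and since $A_i\bfone=\bfone$ and $P\bfone=0$, also $PA_i^n\a_j=PA_i^n(P\a_j)$, so \Lem{convergence} applies and gives $r_i^{-n}PA_i^n\a_j\to(u_i,\a_j)_{l(V_0)}\,Pv_i$ uniformly over the normalized tuples. The ``in particular'' part of \Lem{convergence}, applied with $q_1=q$, $q_2=p$ and using $v_i(p)=0$, $v_i(q)>0$, gives $r_i^{-n}\gm_j^{(n)}=r_i^{-n}\bigl(A_i^n\a_j(q)-A_i^n\a_j(p)\bigr)/v_i(q)\to(u_i,\a_j)_{l(V_0)}$ uniformly over the normalized tuples. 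Subtracting, $r_i^{-n}\bigl(PA_i^n\a_j-\gm_j^{(n)}Pv_i\bigr)\to0$ in $\tilde l(V_0)$, uniformly in $j$ and over the normalized tuples. Since $u\mapsto\cE(\iota(u))^{1/2}$ is a norm on the finite-dimensional space $\tilde l(V_0)$ --- it vanishes only when $\iota(u)$, hence $u$, is constant, by (A3) --- it is equivalent to $|\cdot|_{l(V_0)}$ there, and therefore $r_i^{-2n}\cE(\ph_j^{(n)})\to0$ uniformly in $j=1,\dots,N$ and over the normalized tuples.

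For the right-hand side, the same limit $r_i^{-n}\gm_j^{(n)}\to(u_i,\a_j)_{l(V_0)}$ gives $r_i^{-2n}(\gm^{(n)})^2=\sum_{j=1}^N\bigl(r_i^{-n}\gm_j^{(n)}\bigr)^2\to\sum_{j=1}^N(u_i,\a_j)_{l(V_0)}^2$ uniformly over the normalized tuples, and under the hypothesis this limit is at least $(u_i,\a_l)_{l(V_0)}^2\ge\dl^2$. Also $\cE(\iota(v_i))>0$ because $v_i$ is nonconstant, and, there being finitely many $i$, $c_v:=\min_i\cE(\iota(v_i))>0$. Putting the pieces together, there is a sequence $\theta_n\downarrow0$, depending on neither the $\a_j$'s nor the triple $(p,i,q)$, with $\cE(\ph_j^{(n)})\le r_i^{2n}\theta_n$ for all $j$, while for $n$ large (again uniformly) $\eps(\gm^{(n)})^2\cE(\iota(v_i))\ge\eps\,r_i^{2n}(\dl^2/2)\,c_v$. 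Choosing $n_0$ so that $\theta_{n_0}\le\eps\dl^2c_v/2$ (and so large that the displayed lower bound on $r_i^{-2n}(\gm^{(n)})^2$ holds for $n\ge n_0$) yields \Eq{neg} for every $n\ge n_0$.

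The one delicate point is the bookkeeping of uniformity: the convergences of \Lem{convergence} must be used simultaneously over the compact sphere of normalized tuples and over the finitely many admissible triples, so that a single $n_0$ serves; and one has to notice that the exponential prefactor $r_i^{2n}$ is the \emph{same} on both sides of \Eq{neg} --- it is dictated by the eigenvalue $r_i$ of $A_i$ in both $\ph_j^{(n)}$ and $\gm^{(n)}$ --- so that the prefactors cancel and the comparison reduces to an $o(1)$ factor on the left against a factor bounded below by $\dl^2$ on the right.
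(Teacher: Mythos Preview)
Your proof is correct and follows essentially the same route as the paper's: normalize by homogeneity to $\sum_j|P\a_j|_{l(V_0)}^2=1$, use \Lem{convergence} to show $r_i^{-2n}\cE(\ph_j^{(n)})\to0$ uniformly, use the same lemma together with the hypothesis to bound $r_i^{-2n}(\gm^{(n)})^2$ below by a positive constant, and conclude using $\inf_{i\in S_0}\cE(\iota(v_i))>0$. You are slightly more explicit than the paper in spelling out why $u\mapsto\cE(\iota(u))^{1/2}$ is a norm on $\tilde l(V_0)$, in handling the degenerate case $P\a_1=\dots=P\a_N=0$, and in tracking uniformity over the finitely many triples $(p,i,q)$; these are exactly the steps the paper leaves implicit.
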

\begin{proof}
By multiplying a constant if necessary, we may assume the additional constraint $\sum_{j=1}^N|P\a_j|_{l(V_0)}^2=1$ to prove \Eq{neg} without loss of generality.
From \Lem{convergence}, for $j=1,\dots,N$,
\begin{align}\label{eq:conv1}
\lim_{n\to\infty} r_i^{-n} PA_i^n\a_j&=(u_i,\a_j)_{l(V_0)}P v_i
\shortintertext{and}
\label{eq:conv2}
\lim_{n\to\infty} r_i^{-n} \gm^{(n)}_j&=(u_i,\a_j)_{l(V_0)}
\end{align}
uniformly on $\Gm:=\{(\a_1,\dots,\a_N)\in (l(V_0))^N\mid\sum_{j=1}^N|P\a_j|_{l(V_0)}^2=1\}$.
Therefore, 
\begin{equation}\label{eq:c1}
\lim_{n\to \infty}r_i^{-2n}\cE(\ph^{(n)}_j)=0 \mbox{ uniformly on }\Gm.
\end{equation}
By the assumption $|(u_i,\a_l)_{l(V_0)}|\ge \dl$ and \Eq{conv2}, 
\begin{equation}\label{eq:c2}
r_i^{-2n}(\gm^{(n)})^2\ge r_i^{-2n}(\gm^{(n)}_l)^2\ge\dl^2/2
\mbox{ for sufficiently large $n$}.
\end{equation}
Therefore, the assertion follows from \Eq{c1}, \Eq{c2}, and $\inf_{i\in S_0}\cE(\iota(v_i))>0$.
\end{proof}
The following is a key lemma for the proof of \Thm{2}.
\begin{lem}\label{lem:2.3}
Let $m\in\Z_+$, $w\in W_m$, and $x\in V_w$.
Take $i\in S_0$ such that $x=\pi(w i^\infty)$.
Let $\dl>0$ and $\eps>0$.
Suppose that there exists $l\in\{1,\dots,N\}$ such that 
\[
|(u_i,\a_{w,l})_{l(V_0)}|\ge \dl\biggl(\sum_{j=1}^N|P\a_{w,j}|_{l(V_0)}^2\biggr)^{1/2},
\quad\mbox{where }\a_{w,j}=\iota^{-1}(\psi_w^* h_j).
\]
Then, there exists $M\in\N$ that is independent of $m,w,x,l$ such that for $n\ge M$,
\begin{equation}\label{eq:d}
  \sd_\bfh(x,y)\le (1+\eps)|\bfh(y)-\bfh(x)|_{\R^N}
  \le (1+\eps)\rho_\bfh(x,y)
\end{equation}
for any $y\in V_{w i^n}\setminus\{x\}$.
\end{lem}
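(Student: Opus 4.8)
The second inequality $|\bfh(y)-\bfh(x)|_{\R^N}\le\rho_\bfh(x,y)$ is immediate: using the trivial one-interval partition in the definition of $\ell_\bfh$ gives $\ell_\bfh(\gm)\ge|\bfh(\gm(1))-\bfh(\gm(0))|_{\R^N}$ for every $\gm\in C([0,1]\to K)$. So it suffices to show, for every $f\in\cF$ with $\mu_{\la f\ra}\le\mu_{\la\bfh\ra}$, that $f(y)-f(x)\le(1+\eps)|\bfh(y)-\bfh(x)|_{\R^N}$. I would first pass to the cell $K_{wi^n}$. Put $p:=\psi_{wi^n}^{-1}(x)\in V_0$, the fixed point of $\psi_i$, and write $y=\psi_{wi^n}(q)$ with $q\in V_0\setminus\{p\}$; recall $v_i(p)=0$ and $v_i(q)>0$. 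By \Lem{energy}, $\mu_{\la\psi_{wi^n}^*f\ra}\le\mu_{\la\psi_{wi^n}^*\bfh\ra}$, while $f(y)-f(x)=(\psi_{wi^n}^*f)(q)-(\psi_{wi^n}^*f)(p)$. Since $\psi_{wi^n}^*h_j=\iota(A_i^n\a_{w,j})$, an application of \Lem{neg} (to $\a_j:=\a_{w,j}$, with the given $\dl$ and a small parameter $\eps'>0$ to be fixed later) produces $n_0\in\N$, depending only on $\dl$ and $\eps'$, such that for $n\ge n_0$ we have $\psi_{wi^n}^*h_j=\gm^{(n)}_j\iota(v_i)+\ph^{(n)}_j$ with $\cE(\ph^{(n)}_j)\le\eps'(\gm^{(n)})^2\cE(\iota(v_i))$ and $\gm^{(n)}>0$. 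From the definition of $\gm^{(n)}_j$ and $v_i(p)=0$ one reads off $h_j(y)-h_j(x)=\gm^{(n)}_jv_i(q)$, hence $|\bfh(y)-\bfh(x)|_{\R^N}=\gm^{(n)}v_i(q)$ exactly, and also $\ph^{(n)}_j(p)=\ph^{(n)}_j(q)$.

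Expanding $\mu_{\la\iota(A_i^n\a_{w,j})\ra}$ by bilinearity, summing over $j$, and estimating the cross terms by the Cauchy--Schwarz inequality for the signed energy measures $\mu_{\la\iota(v_i),\ph^{(n)}_j\ra}$ together with the bound on $\cE(\ph^{(n)}_j)$, I obtain $\mu_{\la\psi_{wi^n}^*\bfh\ra}\le(\gm^{(n)})^2\mu_{\la\iota(v_i)\ra}+\nu$, where $\nu$ is a signed measure of total variation $\le C_N\sqrt{\eps'}\,(\gm^{(n)})^2\cE(\iota(v_i))$, with $C_N$ depending only on $N$. Dividing by $(\gm^{(n)})^2$ and setting $G:=(\gm^{(n)})^{-1}\psi_{wi^n}^*f\in\cF$, the problem reduces to the following core estimate: if $G\in\cF$ and $\mu_{\la G\ra}\le\mu_{\la\iota(v_i)\ra}+\tilde\nu$ with $\tilde\nu$ of total variation $\le C_N\sqrt{\eps'}\,\cE(\iota(v_i))$, then $G(q)-G(p)\le(1+\eps)(v_i(q)-v_i(p))$, provided $\eps'$ is small enough in terms of $\eps$.

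This core estimate is the heart of the matter and the step I expect to be the main obstacle. When $\tilde\nu=0$ it asserts $\sd_{\iota(v_i)}(p,q)\le\iota(v_i)(q)-\iota(v_i)(p)$ --- a reverse inequality to \Thm{1} --- and it genuinely fails for general harmonic functions; it holds here only because of the scaling identity $\iota(v_i)\circ\psi_i=r_i\iota(v_i)$ together with conditions (B1)--(B4). I would argue as follows. By the maximum principle for harmonic functions on p.c.f.\ self-similar sets and conditions (B2)--(B3), $\iota(v_i)$ is nonnegative, attains its minimum $0$ only at $p$, and its level sets separate $p$ from $q$; combined with the scaling identity this yields a geodesic for $\iota(v_i)$ from $p$ to $q$ along which $\iota(v_i)$ increases monotonically, so that $\rho_{\iota(v_i)}(p,q)=\iota(v_i)(q)$. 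The decisive point is to show that any admissible competitor $G$ may be replaced, without decreasing $G(q)-G(p)$ and without increasing its energy measure past $\mu_{\la\iota(v_i)\ra}$ (up to the perturbation), by one of the form $\phi(\iota(v_i))$; for such a ``radial'' competitor the chain rule $\mu_{\la\phi(\iota(v_i))\ra}=\phi'(\iota(v_i))^2\mu_{\la\iota(v_i)\ra}$ forces $|\phi'|\le1$ when $\tilde\nu=0$, whence $G(q)-G(p)=\phi(\iota(v_i)(q))-\phi(\iota(v_i)(p))\le\iota(v_i)(q)-\iota(v_i)(p)$. This radialisation I would carry out by a conditioning/co-area argument, disintegrating $\mu_{\la\iota(v_i)\ra}$ along the values of $\iota(v_i)$ --- whose push-forward is absolutely continuous with respect to Lebesgue measure by \Lem{en} --- and using the self-similarity $\iota(v_i)\circ\psi_i=r_i\iota(v_i)$ to propagate the bound to all scales accumulating at $p$. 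The additive perturbation $\tilde\nu$ is absorbed afterwards: it costs a factor $(1+C_N\sqrt{\eps'})^{1/2}$, which is $\le1+\eps$ for $\eps'$ small. Making the radialisation rigorous in the nonsmooth setting, and checking that all thresholds are uniform in $m,w,x$, is the delicate part.

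Finally I would fix $\eps'$ as dictated by the last inequality, put $M:=n_0$ (enlarged, if necessary, by any additional threshold coming from the core estimate), and combine the pieces: for $n\ge M$ and any admissible $f$, $f(y)-f(x)=\gm^{(n)}\bigl(G(q)-G(p)\bigr)\le(1+\eps)\gm^{(n)}(v_i(q)-v_i(p))=(1+\eps)|\bfh(y)-\bfh(x)|_{\R^N}$ for every $y\in V_{wi^n}\setminus\{x\}$, which together with the first paragraph gives $\sd_\bfh(x,y)\le(1+\eps)|\bfh(y)-\bfh(x)|_{\R^N}\le(1+\eps)\rho_\bfh(x,y)$.
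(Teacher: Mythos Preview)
Your reduction to a ``core estimate'' is correct and parallels the paper: decompose $\psi_{wi^n}^*h_j=\gm^{(n)}_j\iota(v_i)+\ph^{(n)}_j$ via \Lem{neg}, identify $|\bfh(y)-\bfh(x)|_{\R^N}=\gm^{(n)}v_i(q)$, and reduce to bounding $(\psi_{wi^n}^*f)(q)-(\psi_{wi^n}^*f)(p)$ under the induced energy-measure constraint. The paper does exactly this.

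The genuine gap is your proof of the core estimate. The proposed ``radialisation'' --- replacing $G$ by some $\phi(\iota(v_i))$ via a co-area/disintegration argument --- is not carried out, and you acknowledge it as ``the main obstacle''. In this fractal setting no co-area formula is available, the level sets of $\iota(v_i)$ have no useful regularity, and there is no mechanism by which conditioning on the value of $\iota(v_i)$ would decrease the energy measure of $G$ while preserving $G(q)-G(p)$. The claim that ``level sets separate $p$ from $q$'' together with the scaling identity does not by itself yield such a rearrangement. As it stands, this step is a hope rather than an argument.

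The paper's proof of the core estimate is completely different and avoids any measure-theoretic disintegration by exploiting (B1) directly as a \emph{finite-dimensional} fact. Writing $g=\gm^{(n)}\iota(v_i)$ and $\hat f=\psi_{wi^n}^*f$ (normalised so $\hat f(p)=0$), the paper sets $\check f:=\hat f\vee g$. This trick buys two things simultaneously: a pointwise lower bound $\check f\ge g$, and --- because the density of $\mu_{\la\check f\ra}$ is at most the maximum of the densities of $\mu_{\la\hat f\ra}$ and $\mu_{\la g\ra}$ --- a total-energy bound $\cE(\check f)\le(1+\eps_2)\cE(g)$. Projecting to the three-dimensional space $\cH$ via $F:=(1+\eps_2)^{-1/2}H_0\check f$ gives $\cE(F)\le\cE(g)$, hence $\cE(g-F,g)\ge0$, which is just the scalar inequality $(G|_{V_0},-Dv_i)_{l(V_0)}\ge0$ for $G:=g-F$. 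Since $\#V_0=3$, $G(p)=0$, and $(-Dv_i)(q)>0$ for $q\ne p$ by (B3), this single linear inequality on two unknowns pins down $G(p_k)$ from below by $-C(1-(1+\eps_2)^{-1/2})g(p_k)$, with $C$ the explicit ratio $\max_{q}v_i(q)(-Dv_i)(q)/\min_{q}v_i(q)(-Dv_i)(q)$. Unwinding $G(p_k)=g(p_k)-(1+\eps_2)^{-1/2}\check f(p_k)\le g(p_k)-(1+\eps_2)^{-1/2}\hat f(p_k)$ then gives $\hat f(p_k)\le((1+C)(1+\eps_2)^{1/2}-C)g(p_k)\le(1+\eps)g(p_k)$, which is the required bound. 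The entire argument lives in $l(V_0)\cong\R^3$; conditions (B1) and (B3) are used in an essential and elementary way, and no analogue of a co-area formula is needed.
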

\begin{proof}
For $s\in S_0$, let $p_s$ denote the fixed point of $\psi_s$, that is, $p_s=\pi(s^\infty)$.
Let
\[
C=\max_{s\in S_0}\left\{\frac{\max_{q\in V_0\setminus\{p_s\}}v_s(q)\times(-Dv_s)(q)}{\min_{q\in V_0\setminus\{p_s\}}v_s(q)\times(-Dv_s)(q)}\right\},
\]
which is positive by (B3).
Take $\eps_1>0$ and $\eps_2>0$ such that 
\[
(1+C)(1+\eps_2)^{1/2}-C\le 1+\eps\mbox{ and }\eps_1=\eps_2/2.
\]
We remark that any $y\in V_{w i^n}\setminus\{x\}$ for $n\in\Z_+$ is described as $y=\pi(w i^n k^\infty)$ for some $k\in S_0\setminus\{i\}$.

Fix $k\in S_0\setminus\{i\}$.
For $n\in\N$ and $j=1,\dots,N$, let $\hat h_j^{(n)}$ denote $\psi_{w i^n}^* h_j$.
Note that $\hat h_j^{(n)}$ is also described as $\iota(A_i^n \a_{w,j})$.
Let 
\[
g_j^{(n)}=\left((\hat h_j^{(n)}(p_k)-\hat h_j^{(n)}(p_i))/{v_i(p_k)}\right)\iota(v_i)
\mbox{ and }
\ph_j^{(n)}=\hat h_j^{(n)}-g_j^{(n)}
\]
for $j=1,\dots,N$, and 
\[
g^{(n)}=\left(\sum_{j=1}^N\left(\frac{\hat h_j^{(n)}(p_k)-\hat h_j^{(n)}(p_i)}{v_i(p_k)}\right)^2\right)^{1/2}\iota(v_i).
\] 
We note that 
\begin{equation}\label{eq:g}
\sum_{j=1}^N \mu_{\la g_j^{(n)}\ra}=\mu_{\la g^{(n)}\ra}
\end{equation}
and
\begin{equation}\label{eq:trivial}
\mu_{\la \hat h_j^{(n)}\ra}\le (1+\eps_1)\mu_{\la g_j^{(n)}\ra}+(1+\eps_1^{-1})\mu_{\la \ph_j^{(n)}\ra},\quad j=1,\dots,N.
\end{equation}
From \Lem{neg} with $\eps=\eps_1/((1+\eps_1^{-1})N)$, there exists $M\in \N$ that is independent of $m,w,x,l,k$ such that for all $n\ge M$,
\begin{equation}\label{eq:small}
\cE(\ph_j^{(n)})\le\frac{\eps_1}{(1+\eps_1^{-1})N}\cE(g^{(n)}),\quad j=1,\dots,N.
\end{equation}
Hereafter, we fix such $n$ and omit the superscript~${}^{(n)}$ from the notation.
From \Eq{g} and \Eq{small}, we have
\begin{equation}\label{eq:dom}
\sum_{j=1}^N\left\{(1+\eps_1)\mu_{\la g_j\ra}(K)+(1+\eps_1^{-1})\mu_{\la \ph_j\ra}(K)\right\}
\le (1+\eps_2)\mu_{\la g\ra}(K).
\end{equation}
Let $f\in\cF$ satisfy $\mu_{\la f\ra}\le\mu_{\la \bfh\ra}$ and $f(x)=0$.
Let $\hat f$ denote $\psi_{w i^n}^* f$ and define $\check f:=\hat f\vee g\in\cF$.
Then,
\[
  \mu_{\la \hat f\ra}\le \sum_{j=1}^N\mu_{\la\hat h_j\ra}
  \le\sum_{j=1}^N \{(1+\eps_1)\mu_{\la g_j\ra}+(1+\eps_1^{-1})\mu_{\la \ph_j\ra}\}
\]
and
\[
\frac{d\mu_{\la \check f\ra}}{d\nu}\le \frac{d\mu_{\la \hat f\ra}}{d\nu}\vee \frac{d\mu_{\la g\ra}}{d\nu}\quad
\nu\mbox{-a.e. with }\nu=\mu_{\la \hat f\ra}+\mu_{\la g\ra}
\]
in view of \Lem{energy}, \Eq{trivial}, and \Lem{en}~(ii).
Combining these inequalities with \Eq{g}, we have
\[
\mu_{\la \check f\ra}\le\sum_{j=1}^N \{(1+\eps_1)\mu_{\la g_j\ra}+(1+\eps_1^{-1})\mu_{\la \ph_j\ra}\}.
\]
In particular,
$\mu_{\la \check f\ra}(K)\le (1+\eps_2)\mu_{\la g\ra}(K)$ from \Eq{dom}.

Let $F=(1+\eps_2)^{-1/2}\iota(\check f|_{V_0})\,(=(1+\eps_2)^{-1/2}H_0\check f)\in\cH$.
Then,
$\cE(F)\le \cE((1+\eps_2)^{-1/2}\check f)\le \cE(g)$, which implies that
\[
0\le\cE(F-g)
=\cE(F)-2\cE(F,g)+\cE(g)
\le 2\cE(g-F,g).
\]
Letting $G=g-F\in\cH$, we have $G(p_i)=0$ and
\[
  G(q)=g(q)-(1+\eps_2)^{-1/2}(\hat f(q)\vee g(q))\le  (1-(1+\eps_2)^{-1/2})g(q)
\]
for any $q\in V_0$.
Let $q'$ denote the unique element of $V_0\setminus\{p_i,p_k\}$.
Since 
\[
(G|_{V_0},-Dv_i)_{l(V_0)}=\left(\sum_{j=1}^N\left(\frac{\hat h_j(p_k)-\hat h_j(p_i)}{v_i(p_k)}\right)^2\right)^{-1/2}\cE(g-F,g)\ge0,
\]
we have
\begin{align*}
G(p_k)(-Dv_i)(p_k)
&\ge -G(p_i)(-Dv_i)(p_i)-G(q')(-Dv_i)(q')\\
&\ge 0-(1-(1+\eps_2)^{-1/2})g(q')(-Dv_i)(q')\\
&\ge -C(1-(1+\eps_2)^{-1/2})g(p_k)(-Dv_i)(p_k).
\end{align*}
Thus, 
\begin{align*}
-C(1-(1+\eps_2)^{-1/2})g(p_k)
&\le G(p_k)\\
&= g(p_k)-(1+\eps_2)^{-1/2}(\hat f(p_k)\vee g(p_k))\\
&\le g(p_k)-(1+\eps_2)^{-1/2}\hat f(p_k),
\end{align*}
which implies that
\[
\hat f(p_k)
\le((1+C)(1+\eps_2)^{1/2}-C)g(p_k)\le(1+\eps) g(p_k).
\]
Therefore, for $y=\pi(w i^n k^\infty)\in V_{w i^n}\setminus\{x\}$,
\begin{align*}
f(y)-f(x)&=f(y)
\le (1+\eps)\left(\sum_{j=1}^N\left(\frac{\hat h_j(p_k)-\hat h_j(p_i)}{v_i(p_k)}\right)^2\right)^{1/2}v_i(p_k)\\
&=(1+\eps)|\bfh(y)-\bfh(x)|_{\R^N}
\le (1+\eps)\rho_\bfh(x,y).
\end{align*}
By taking the supremum with respect to $f$, we obtain \Eq{d}.
\end{proof}
\begin{lem}\label{lem:good}
There exists $\dl'>0$ such that the following holds:
for any distinct points $i,j$ of $S_0$ and every $u\in l(V_0)$,
$|(u_i,u)_{l(V_0)}|\vee|(u_j,u)_{l(V_0)}|\ge \dl'|P u|_{l(V_0)}$.
\end{lem}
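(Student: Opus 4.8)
The plan is to reduce the statement to a linear-independence assertion for the vectors $u_i$ inside the two-dimensional space $\tilde l(V_0)$, and then to conclude by equivalence of norms together with the finiteness of $S_0$. First I would note that each $u_i$ lies in $\tilde l(V_0)$, so that $(u_i,u)_{l(V_0)}=(u_i,Pu)_{l(V_0)}$ for every $u\in l(V_0)$; writing $v=Pu\in\tilde l(V_0)$, it therefore suffices to find $\dl'>0$ such that $|(u_i,v)_{l(V_0)}|\vee|(u_j,v)_{l(V_0)}|\ge\dl'|v|_{l(V_0)}$ for all distinct $i,j\in S_0$ and all $v\in\tilde l(V_0)$.

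The crucial point is that $u_i$ and $u_j$ are linearly independent whenever $i\ne j$. To see this I would exploit the sign pattern of $u_i$: by \Eq{indep}, $u_i(q)>0$ for all $q\in V_0\setminus\{p_i\}$, while $u_i(p_i)=-\sum_{q\ne p_i}u_i(q)<0$ since $(u_i,\bfone)_{l(V_0)}=0$. Suppose $u_i=\lm u_j$. The case $\lm=0$ is impossible because $u_i\ne0$. If $\lm>0$, then $u_j(p_i)>0$ (as $p_i\ne p_j$), so $\lm u_j(p_i)>0>u_i(p_i)$, a contradiction; if $\lm<0$, then the unique element $q'$ of $V_0\setminus\{p_i,p_j\}$, which exists since $\#V_0=3$, satisfies $u_i(q')>0$ and $u_j(q')>0$, so $\lm u_j(q')<0<u_i(q')$, again a contradiction. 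Hence $\{u_i,u_j\}$ is linearly independent, and since $\dim\tilde l(V_0)=2$ it spans $\tilde l(V_0)$.

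Finally, for each fixed pair $i\ne j$ the map $v\mapsto\bigl((u_i,v)_{l(V_0)}^2+(u_j,v)_{l(V_0)}^2\bigr)^{1/2}$ is a norm on $\tilde l(V_0)$: it is plainly a seminorm, and it vanishes only when $v$ is orthogonal to both $u_i$ and $u_j$, hence to all of $\tilde l(V_0)$, hence $v=0$. By equivalence of norms on the finite-dimensional space $\tilde l(V_0)$ there is $c_{ij}>0$ with $\bigl((u_i,v)_{l(V_0)}^2+(u_j,v)_{l(V_0)}^2\bigr)^{1/2}\ge c_{ij}|v|_{l(V_0)}$, and so $|(u_i,v)_{l(V_0)}|\vee|(u_j,v)_{l(V_0)}|\ge(c_{ij}/\sqrt2)|v|_{l(V_0)}$. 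Setting $\dl':=\min\{c_{ij}/\sqrt2\mid i,j\in S_0,\ i\ne j\}$, a minimum over finitely many positive numbers and therefore positive, gives the claim. The only genuinely substantive ingredient is the sign analysis underlying the linear independence; the remainder is a soft finite-dimensional argument, so I do not expect a real obstacle.
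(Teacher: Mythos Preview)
Your proposal is correct and follows essentially the same approach as the paper: reduce to $\tilde l(V_0)$, use that $u_i$ and $u_j$ span this two-dimensional space, and apply a finite-dimensional compactness/norm-equivalence argument together with finiteness of $S_0$. The paper's proof simply asserts that the span of $u_i$ and $u_j$ is $\tilde l(V_0)$ and then takes the infimum over the unit sphere; your version supplies the sign-based justification of linear independence explicitly and phrases the compactness step via equivalence of norms, but there is no substantive difference.
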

\begin{proof}
Since the linear span of $u_i$ and $u_j$ is $\tilde l(V_0)$, 
\[
\inf\{|(u_i,u)_{l(V_0)}|\vee|(u_j,u)_{l(V_0)}|\mid u\in \tilde l(V_0),\ |u|_{l(V_0)}=1\}>0.
\]
Therefore, the assertion follows.
\end{proof}
Now, we prove \Thm{2}. For the proof, we make a slight generalization of the concept of $\ell_\bfh$.
Let $\cI$ be a disjoint union of a finite number of closed intervals $\{I_k\}$. For $\gm\in C(\cI\to K)$, we define its length $\ell_\bfh(\gm)$ by $\sum_{k}\ell_\bfh(\gm|_{I_k})$.
\begin{proof*}[Proof of \Thm{2}]
From \Thm{1}, it suffices to prove the inequality $\rho_\bfh(x,y)\ge \sd_\bfh(x,y)$ for distinct $x,y\in K$.

(Step 1) The case when $x,y\in V_m$ for some $m\in\Z_+$.
Take $\dl'$ in \Lem{good}.
Let $\eps>0$, $\dl=\dl'/\sqrt N$ and take $M$ in \Lem{2.3}.
Take a shortest path $\gm\in C([0,1]\to K)$ connecting $x$ and $y$.
We may assume that $\gm$ is injective.
Let $\cI_1=[0,1]$.
We define $\{I_{n,k}\}_{k=1}^{l(n)}$, $\{J_{n,k}\}_{k=1}^{l(n)}$, and $\cI_{n+1}$ for $n\in\N$ inductively as follows.
First, let $\{I_{n,k}\}_{k=1}^{l(n)}$ be the collection of closed intervals $I_{n,k}=[s_{n,k},t_{n,k}]$ such that 
\begin{itemize}
\item $\bigcup_{k=1}^{l(n)} I_{n,k}=\cI_n$;
\item $s_{n,k}<t_{n,k}$, $\gm(s_{n,k})\in V_{m+Mn}$, $\gm(t_{n,k})\in V_{m+Mn}$, and $\gm(t)\notin V_{m+Mn}$ for all $t\in(s_{n,k},t_{n,k})$;
\item For $k\ne k'$, $I_{n,k}\cap I_{n,k'}$ consists of at most one point.
\end{itemize}
Next, for each $k=1,\dots,l(n)$, take $w\in W_{m+Mn}$ and $i,\hat i\in S_0$ such that $\gm([s_{n,k},t_{n,k}])\subset K_w$, $\gm(s_{n,k})=\pi(w i^\infty)$, and $\gm(t_{n,k})=\pi(w \hat i^\infty)$.
Denote $\iota^{-1}(\psi_w^* h_j)$ by $\a_j$ for $j=1,\dots,N$.
Take $j\in\{1,\dots,N\}$ such that $|P \a_j|_{l(V_0)}$ attains the maximum of\break $\{|P \a_1|_{l(V_0)},\dots,|P \a_N|_{l(V_0)}\}$.
From \Lem{good}, at least one of the following holds:
\begin{enumerate}
\item $|(u_{i},\a_j)_{l(V_0)}|\ge \dl'|P \a_j|_{l(V_0)}$;
\item $|(u_{\hat i},\a_j)_{l(V_0)}|\ge \dl'|P \a_j|_{l(V_0)}$.
\end{enumerate}
If (i) holds, set $J_{n,k}=[s_{n,k},t'_{n,k}]$ with $t'_{n,k}=\inf\{t>s_{n,k}\mid \gm(t)\in V_{m+M(n+1)}\}$. 
Otherwise, set $J_{n,k}=[s'_{n,k},t_{n,k}]$ with $s'_{n,k}=\sup\{t<t_{n,k}\mid \gm(t)\in V_{m+M(n+1)}\}$.
Define $\cI_{n+1}=\bigcup_{k=1}^{l(n)}\overline{I_{n,k}\setminus J_{n,k}}$.

Let $n\in\N$ and $k=1,\dots,l(n)$. From \Cor{2.2}~(ii),
\[
\ell_\bfh(\gm|_{J_{n,k}})\ge c_0(M)\ell_\bfh(\gm|_{I_{n,k}}),
\]
that is,
\[
\ell_\bfh(\gm|_{\overline{I_{n,k}\setminus J_{n,k}}})\le (1-c_0(M))\ell_\bfh(\gm|_{I_{n,k}}).
\] 
Therefore,
\[
\ell_\bfh(\gm|_{\cI_{n+1}})\le (1-c_0(M))\ell_\bfh(\gm|_{\cI_n}).
\] 
Then,
\begin{equation}\label{eq:inductive}
\ell_\bfh(\gm|_{\cI_{n}})\le (1-c_0(M))^{n-1} \ell_\bfh(\gm)
=(1-c_0(M))^{n-1} \rho_\bfh(x,y).
\end{equation}

Fix $R\in\N$ and let $\sJ=\{J_{n,k}\mid 1\le n\le R,\ 1\le k\le l(n)\}$.
Let $0=t_0<t_1<\dots<t_l=1$ be the arrangement of all the endpoints of the intervals $J_{n,k}$ in $\sJ$ in increasing order.
For all $i=0,\dots,l-1$, the inequality
\begin{equation}\label{eq:c0M}
 \rho_\bfh(\gm(t_i),\gm(t_{i+1}))\ge c_0(M)\sd_\bfh(\gm(t_i),\gm(t_{i+1}))
\end{equation}
holds by applying \Cor{2.2}~(i) to a series of adjacent two points of a suitable $n$-walk connecting $\gm(t_i)$ and $\gm(t_{i+1})$, where $n$ is the smallest number such that $\gm(t_i)\in V_n$ and $\gm(t_{i+1})\in V_n$.
Let $\cQ=\{i=0,\dots,l-1\mid [t_i,t_{i+1}]\in\sJ\}$.
From \Lem{2.3}, for $i\in \cQ$,
\[
 \rho_\bfh(\gm(t_i),\gm(t_{i+1}))\ge (1+\eps)^{-1}\sd_\bfh(\gm(t_i),\gm(t_{i+1})).
\]
Then,
\begin{align*}
\rho_\bfh(x,y)
&=\sum_{i=0}^{l-1}\rho_\bfh(\gm(t_i),\gm(t_{i+1}))
\ge\sum_{i\in \cQ}\rho_\bfh(\gm(t_i),\gm(t_{i+1}))\\
&\ge (1+\eps)^{-1}\sum_{i\in \cQ}\sd_\bfh(\gm(t_i),\gm(t_{i+1}))\\
&\ge(1+\eps)^{-1}\sum_{i=0}^{l-1}\sd_\bfh(\gm(t_i),\gm(t_{i+1}))\\
&\quad-(1+\eps)^{-1}c_0(M)^{-1}\sum_{i\notin \cQ}\rho_\bfh(\gm(t_i),\gm(t_{i+1}))
\quad\mbox{(from \Eq{c0M})}\\
&\ge(1+\eps)^{-1}\sd_\bfh(x,y)
-(1+\eps)^{-1}c_0(M)^{-1}\ell_\bfh(\gm|_{\cI_{R+1}})\\
&\ge(1+\eps)^{-1}\sd_\bfh(x,y)-(1+\eps)^{-1}c_0(M)^{-1}(1-c_0(M))^{R}\rho_\bfh(x,y).
\end{align*}
Here, \Eq{inductive} was used in the last inequality.
By letting $R\to\infty$ and $\eps\to0$, we conclude that $\rho_\bfh(x,y)\ge\sd_\bfh(x,y)$.

(Step 2) The case when $x,y\in K$.
Take $\{x_n\},\{y_n\}\subset K$ in \Cor{rhoh}. Then, from \Cor{rhoh}, Step~1, and \Lem{dh},
\[
\rho_\bfh(x,y)=\lim_{k\to\infty}\left(\lim_{n\to\infty}\rho_\bfh(x_k,y_n)\right)\ge \lim_{k\to\infty}\left(\lim_{n\to\infty}\sd_\bfh(x_k,y_n)\right)=\sd_\bfh(x,y).\eqno{\qedsymbol}
\]
\end{proof*}
\subsection*{Acknowledgements}
This research was partly supported by KAKENHI (21740094, 24540170).
The author thanks Naotaka Kajino for insightful discussions and the anonymous referee for very careful reading and valuable proposals which have led to improvements of the first version.

\end{document}